\documentclass{amsart}
\pdfoutput=1
\usepackage{amsmath,amsthm,amssymb}
\usepackage{mathrsfs}
\usepackage{hyperref}
\usepackage{graphicx}
\usepackage{color}

\usepackage[all]{xy}
\SelectTips{cm}{10}

\usepackage[small,nohug,heads=LaTeX,midshaft]{diagrams}
\diagramstyle[labelstyle=\scriptstyle]
\newarrow{Inline}C----
\newarrow{Corresponds}<--->

\numberwithin{figure}{section}
\numberwithin{equation}{section}

\theoremstyle{plain}
\newtheorem{thm}{Theorem}[section]
\newtheorem{prop}[thm]{Proposition}
\newtheorem{lem}[thm]{Lemma}
\newtheorem{cor}[thm]{Corollary}
\newtheorem{conj}[thm]{Conjecture}

\theoremstyle{definition}
\newtheorem{defn}[thm]{Definition}
\newtheorem{example}[thm]{Example}

\theoremstyle{remark}
\newtheorem{rem}[thm]{Remark}

\newcommand{\la}{\langle}
\newcommand{\ra}{\rangle}
\newcommand{\into}{\hookrightarrow}
\newcommand{\onto}{\twoheadrightarrow}

\newcommand{\xra}{\xrightarrow}

\newcommand{\wt}{\widetilde}

\newcommand{\om}{\Omega}
\newcommand{\si}{\Sigma}

\newcommand{\lsi}{\Omega\Sigma}
\newcommand{\lsii}{\Omega^2\Sigma^2}
\newcommand{\lsn}{\Omega^n\Sigma^n}

\newcommand{\mc}{\mathcal}

\newcommand{\mr}{\mathrm}
\newcommand{\ms}{\mathscr}

\newcommand{\F}{\mathbb{F}}

\newcommand{\R}{\mathbb{R}}
\newcommand{\Z}{\mathbb{Z}}

\newcommand{\id}{\mathrm{id}}

\newcommand{\Ker}{\mathrm{Ker}\,}

\newcommand{\Brun}{\mathrm{Brun}}

\begin{document}

\title[Operations on Spaces over Operads]{Operations on Spaces over Operads and Applications to Homotopy Groups}
\author{Wenbin ZHANG}
\date{30 Nov 2011}
\address{Department of Mathematics, National University of Singapore}
\email{smile.wenbin@gmail.com}

\begin{abstract}
  We establish certain smash operations on spaces over operads which are general analogues of the Samelson product on single loop spaces, and obtain a conceptual description of the structure of the homotopy groups of spaces $Y$ over a symmetric $K(\pi,1)$ operad: $\pi_*Y$ is a module over the free algebraic symmetric operad generated by operations on homotopy groups induced by these smash operations. In particular the homotopy groups of double loop spaces is a module over the free algebraic symmetric operad generated by the conjugacy classes of Brunnian braids modulo the conjugation action of pure braids.
\end{abstract}

\maketitle

\tableofcontents

\section{Introduction}
A fundamental problem in homotopy theory is to determine the homotopy groups $\pi_*X$ of a space $X$. Structures of homotopy groups would be important for the determination of $\pi_* X$ by comparing with (co)homology theories which have rich structures so that they may be determined by generators and certain structures. Thus structures are important to control (co)homology theories. For instance, the structures of $H_*\lsn X$ are the essential part in the determination of $H_*\lsn X$ \cite{Coh-Lad-May:1976:HILS}. The objective of this paper is to investigate the structures of homotopy groups. This paper is part the author's Ph.D. thesis.

Determination of the homotopy groups $\pi_*X$ of a space $X$ is equivalent to determination of the homotopy groups $\pi_* \om^n X$ of loop spaces $\om^n X$ ($n\geq 1$), as $\pi_{k+n}X= \pi_k \om^n X$. The latter has a great advantage that (iterated) loop spaces $\om^n X$ have rich structures which may be helpful to uncover the structures of $\pi_* \om^n X$ and thus the structures of $\pi_* X$. For instance, a single loop space $\om X$ admits a product
$$[-,-]: \om X \wedge \om X \to \om X$$
called the Samelson product, which induces a structure on $\pi_* \om X$ similar to a Lie algebra. As illustrated by this example, certain products on (iterated) loop spaces may induce certain structures on homotopy groups. In this paper, we are concerned about generalization of the Samelson product to iterated loop spaces, namely
\begin{quote}
  \textbf{Question:} What structures on $\om^n X$ ($n\geq 2$) can induce certain structures on $\pi_* \om^n X$ analogous to that the Samelson product on $\om X$ induces a Lie algebra structure on $\pi_* \om X$?
\end{quote}

To analyze this question, first let us recall that the little $n$-cubes operad $\ms{C}_n$ acts on $\om^n X$ for $n\geq 1$ \cite{Boa-Vog:1968:HEHS, May:1972:GILS} and its converse is also true.

\begin{thm}[May (1972) \cite{May:1972:GILS}, Boardman and Vogt (1973) \cite{Boa-Vog:1973:HIASTS}]
  If a path-connected space $Y$ admits an action of $\ms{C}_n$, then $Y$ is weakly homotopy equivalent to $\om^n X$ for some $X$.
\end{thm}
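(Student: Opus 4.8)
The plan is to carry out May's recognition construction. Associated to the little $n$-cubes operad $\ms{C}_n$ is a monad $(C_n,\mu,\eta)$ on based spaces, with $C_n Z=\bigl(\coprod_{j\ge 0}\ms{C}_n(j)\times_{\Sigma_j}Z^j\bigr)/{\sim}$ (basepoint identifications), whose algebras are exactly $\ms{C}_n$-spaces. Since $\ms{C}_n$ acts on $\om^n\si^n Z$ naturally in $Z$ and $\om^n\si^n$ is itself a monad, this action assembles into a morphism of monads $\alpha_n\colon C_n\to\om^n\si^n$. The single deep ingredient is the \emph{approximation theorem}: for path-connected $Z$, the map $\alpha_n\colon C_n Z\to\om^n\si^n Z$ is a weak homotopy equivalence. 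I expect this to be the crux of a self-contained argument; one establishes it by comparing the filtration of $C_n Z$ by the number of little cubes with a James--Milgram-type filtration of $\om^n\si^n Z$ and identifying the successive subquotients through configuration spaces (equivalently, via a group-completion/quasifibration argument), and here I would simply invoke it.

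Granting this, let $Y$ be a path-connected $\ms{C}_n$-space, regarded as a $C_n$-algebra; we may assume $Y$ well-pointed. Form the two-sided monadic bar constructions $B_\bullet(C_n,C_n,Y)$ (with $k$-simplices $C_n^{k+1}Y$), $B_\bullet(\si^n,C_n,Y)$ (with $k$-simplices $\si^n C_n^{k}Y$, the right $C_n$-action being $\si^n C_n\xra{\si^n\alpha_n}\si^n\om^n\si^n\to\si^n$ via the adjunction counit), and $B_\bullet(\om^n\si^n,C_n,Y)=\om^n B_\bullet(\si^n,C_n,Y)$. The augmentation $B_\bullet(C_n,C_n,Y)\to Y$ admits extra degeneracies, so on realization it is a homotopy equivalence; and $\alpha_n$ induces a map $B_\bullet(C_n,C_n,Y)\to B_\bullet(\om^n\si^n,C_n,Y)$ which, since each $C_n^{k}Y$ is path-connected ($Y$ is, and $C_n$ preserves path-connectivity), is a levelwise weak equivalence by the approximation theorem, hence a weak equivalence on realizations.

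Set $X:=|B_\bullet(\si^n,C_n,Y)|$. Since $\si^n$ lands in $(n-1)$-connected spaces, $B_\bullet(\si^n,C_n,Y)$ is levelwise path-connected and, by well-pointedness of $Y$, proper; so by the standard fact that $\om$ commutes up to weak equivalence with geometric realization of proper, levelwise path-connected simplicial spaces --- applied $n$ times, the $j$-th application requiring only that $B_\bullet(\om^j\si^n,C_n,Y)$ be levelwise path-connected, i.e. $\pi_j\si^n C_n^{k}Y=0$, which holds for $0\le j\le n-1$ --- there is a chain of weak equivalences
\begin{align*}
  \om^n X=\om^n\,|B_\bullet(\si^n,C_n,Y)|
  &\simeq |\om^n B_\bullet(\si^n,C_n,Y)|=|B_\bullet(\om^n\si^n,C_n,Y)|\\
  &\xleftarrow{\sim}|B_\bullet(C_n,C_n,Y)|\xra{\sim}Y ,
\end{align*}
where the middle equality uses that $\om^n$, being a levelwise right adjoint, commutes with the simplicial structure maps. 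Thus $Y$ is weakly homotopy equivalent to $\om^n X$. Only the approximation theorem and the commutation of $\om^n$ with realization require genuine work; the remainder is formal manipulation of bar constructions.
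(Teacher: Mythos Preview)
Your sketch is a faithful outline of May's original argument via the monadic two-sided bar construction and the approximation theorem, and as such it is correct in its broad strokes. However, there is nothing to compare it against: the paper does not supply a proof of this theorem. The statement appears in the Introduction as a cited classical result, attributed to May \cite{May:1972:GILS} and Boardman--Vogt \cite{Boa-Vog:1973:HIASTS}, and is invoked only to motivate why the action of $\ms{C}_n$ on $\om^n X$ should be regarded as carrying all essential information about $n$-fold loop structure. So your proposal is not an alternative to the paper's proof; it is a summary of the very proof the paper is citing.
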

In other words, a path-connected space is of the weak homotopy type of an $n$-fold loop space iff it admits an action of $\ms{C}_n$ up to homotopy. Namely the action of $\ms{C}_n$ on $n$-fold loop spaces
$$\theta: \ms{C}_n(k) \times (\om^n X)^k \to \om^n X$$
characterizes $n$-fold loop spaces and thus should carry all the essential information of $n$-fold loop spaces. So good understanding of $\theta$ on certain aspects may be helpful to obtain certain information of $n$-fold loop spaces. For instance, in homology the behavior of $\theta$ is crucial to the homology of $n$-fold loop spaces, which had been well studied in 1970's \cite{Coh-Lad-May:1976:HILS}. Unfortunately on homotopy groups $\theta$, as well as $(\om \theta)_*$, is just the summation and thus does not carry useful information.

Besides homology, then what else important information of $n$-fold loop spaces can be extracted from $\theta$, especially on the space level? Notice that $\theta$ unites all operations on $n$-fold loop spaces as a whole and this unity is certainly of great advantages. In certain situations, however, it is necessary to break down $\theta$ into many finer operations. For example, in homology $\theta$ is automatically broken down into many homology operations. To extract information from $\theta$ on the space level, we propose the following idea which applies not only to $\ms{C}_n$ and $\om^n X$ but also to general topological operads $\ms{C}$ and $\ms{C}$-spaces:
\begin{quote}
  \textbf{Idea:} Break down the action $\theta$ of $\ms{C}$ on $\ms{C}$-spaces $Y$ into many finer operations by composing $\theta$ with elements in $[S^l, \ms{C}(k)]$, i.e. maps $S^l \to \ms{C}(k)$ to get various maps $S^l \times Y^k \to Y$, then assemble all these finer operations together to recover (partially) global structures of $Y$.
\end{quote}
Namely, for each $\alpha \in [S^l, \ms{C}(k)]$, let
$$\theta_{\alpha}: S^l\times Y^k \xra{\alpha \times \id^k} \ms{C}(k) \times Y^k \xra{\theta} Y,$$
then $\theta$ is broken down into a lot of product operations $\theta_{\alpha}$.

Note that $[S^l, \ms{C}]$ is naturally a $\Delta$-set with faces $d_i$. The key observation is that, if $d_i \alpha$ is trivial for all $i$, then $\theta_{\alpha}$ probably is homotopic to
$$\mu'_k: S^l \times Y^k \xra{\mr{proj.}} Y^k \xra{\mu_k} Y$$
restricted to the fat wedge where $\mu_k$ is the iterated product on $Y$; if so, then $\mu'_k- \theta_{\alpha}$ is null homotopic restricted to the fat wedge and thus factors through the smash product $S^l\wedge Y^{\wedge k}$, namely
$$\xymatrix{
  S^l \times Y^k \ar[d] \ar[r]^-{\mu'_k- \theta_{\alpha}}  & Y \\
  S^l\wedge Y^{\wedge k} \ar@{-->}[ur]_{\bar{\theta}_{\alpha}} }$$
where $\bar{\theta}_{\alpha}$ is the induced map, called a smash operation on $Y$, which can be thought of as a general analogue of the Samelson product. This in fact gives the Samelson product if $l=0$, $Y=\om X$ and $\alpha\in \pi_0 \ms{C}_1(2)= S_2$ is the transposition. Then each smash operation canonically induces a family of multilinear homomorphisms on homotopy groups
$$(\bar{\theta}_{\alpha})_*: \pi_l S^l \times \pi_{m_1}Y \times \cdots \times \pi_{m_k}Y\to \pi_{l+m_1+ \cdots+ m_k}Y,$$
sending $[f_i]\in \pi_{m_i} Y$ to the homotopy class of
$$S^{l+ m_1+ \cdots+ m_k}= S^l \wedge S^{m_1} \wedge \cdots \wedge S^{m_k} \xra{\id \wedge f_i \wedge \cdots \wedge f_k} S^l \wedge Y^{\wedge k} \xra{\bar{\theta}_{\alpha}} Y.$$
We actually need only consider
$$\tilde{\theta}_{\alpha}:= (\bar{\theta}_{\alpha})_* (\iota;-): \pi_{m_1}Y \times \cdots \times \pi_{m_k}Y \to \pi_{l+m_1+ \cdots+ m_k}Y,$$
where $\iota$ is the identity of $\pi_l S^l$.

We propose the following conjecture (Conjecture \ref{conj:smash_operation})

\begin{conj}
  Let $\ms{C}$ be a path-connected topological operad with a basepoint and $Y$ a $\ms{C}$-space, then for $\alpha\in [S^l, \ms{C}(k)]$ with all $d_i \alpha$ trivial, $\theta_{\alpha} \simeq \mu'_k$ restricted to the fat wedge of $S^l \times Y^k$ and thus $\mu'_k- \theta_{\alpha}$ induces a map $\bar{\theta}_{\alpha}: S^l\wedge Y^{\wedge k}\to Y$.
\end{conj}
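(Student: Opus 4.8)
The plan is to prove $\theta_\alpha\simeq\mu'_k$ on the fat wedge $T\subset S^l\times Y^k$ by decomposing $T$ into the $k+1$ slices $A_0=\{*\}\times Y^k$ and $A_i=S^l\times Y^{i-1}\times\{*\}\times Y^{k-i}$ ($1\le i\le k$), homotoping the two maps together on each slice, and then assembling these homotopies over $T=A_0\cup\cdots\cup A_k$. First I would normalize the iterated product by taking $\mu_k:=\theta(*_k;-)$ with $*_k\in\ms C(k)$ the basepoint; this loses nothing because $\ms C(k)$ is path-connected, and one may choose the basepoints compatibly so that $d_i*_k=*_{k-1}$. The only structural input needed is the operad-associativity relation $\theta\big((d_i c);y_1,\dots,y_{k-1}\big)=\theta\big(c;y_1,\dots,y_{i-1},*,y_i,\dots,y_{k-1}\big)$, with $*$ the basepoint of $Y$. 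With these conventions the two maps agree \emph{on the nose} on $A_0$, since $\alpha$ is based: $\theta_\alpha(*,\vec y)=\theta(*_k;\vec y)=\mu_k(\vec y)=\mu'_k(*,\vec y)$. On $A_i$, writing $\vec y^{\,i}$ for the tuple with $y_i$ deleted, the same relation gives $\theta_\alpha|_{A_i}=\theta\big((d_i\alpha)(s);\vec y^{\,i}\big)$ and $\mu'_k|_{A_i}=\theta(*_{k-1};\vec y^{\,i})$; since $d_i\alpha$ is trivial, a choice of based nullhomotopy $G_i\colon S^l\times I\to\ms C(k-1)$ from (a representative of) $d_i\alpha$ to the constant map $*_{k-1}$ produces a homotopy $H_i:=\theta\big(G_i(s,t);\vec y^{\,i}\big)$ from $\theta_\alpha|_{A_i}$ to $\mu'_k|_{A_i}$. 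On $A_0$ take $H_0$ constant.

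The homotopies $H_0,\dots,H_k$ must now be glued over $T$. On $A_0\cap A_i$ both $H_0$ and $H_i$ are constant (this uses that $G_i$ is \emph{based}), so they automatically agree. On $A_i\cap A_j$ with $1\le i<j\le k$, however, $H_i$ and $H_j$ restrict to $\theta$ applied respectively to the further faces $d\,G_i$ and $d\,G_j$; these are nullhomotopies of $d\,d_i\alpha$ and $d\,d_j\alpha$, maps which coincide into $\ms C(k-2)$ by the $\Delta$-identities but need not coincide \emph{as nullhomotopies}. So the real content of the conjecture is to produce the whole system $\{G_i\}$ \emph{coherently}, i.e.\ compatibly under all iterated face maps --- equivalently, to solve a lifting problem in the tower of based mapping spaces $\mathrm{Map}_*(S^l,\ms C(j))$ organized by the $\Delta$-structure of $[S^l,\ms C]$. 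Concretely one would induct over the filtration $T_j=A_0\cup\cdots\cup A_j$, extending at the $j$-th stage the homotopy already defined on $T_{j-1}\cap A_j$ across $A_j$ using that it differs from the native $H_j$ by a nullhomotopic defect; the successive obstructions lie in the homotopy groups of $\mathrm{Map}_*(S^l,\ms C(j))$.

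This coherence/gluing step is the one I expect to be the main obstacle, and it is presumably why the statement is only a conjecture. When $l=0$ and $\ms C=\ms C_1$ --- the Samelson case --- one has $\mathrm{Map}_*(S^0,\ms C_1(j))=\ms C_1(j)=S_j$, which is discrete, so all obstruction groups vanish and the construction goes through classically. When $l=1$ and $\ms C=\ms C_2$ --- the double-loop case, where an $\alpha$ with all $d_i\alpha$ trivial is exactly a Brunnian braid --- the spaces $\mathrm{Map}_*(S^1,\ms C_2(j))$ are disjoint unions of classifying spaces of braid centralizers, so the ambiguity in $G_i$ is nontrivial and is precisely the conjugation action of the pure braid group that appears in the main theorem; here the obstructions need not vanish, and one would need either extra hypotheses on $\ms C$ and $l$ or a uniform construction of the coherent nullhomotopy directly from the operad composition maps. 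Granting $\theta_\alpha\simeq\mu'_k$ on $T$, the asserted factorization of $\mu'_k-\theta_\alpha$ (formed in the $H$-group $\om Y$, i.e.\ after one loop coordinate) through $S^l\times Y^k/T=S^l\wedge Y^{\wedge k}$ follows at once from the cofibration $T\into S^l\times Y^k$.
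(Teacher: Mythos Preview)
Your analysis is essentially the paper's own. The paper likewise reduces to slicewise homotopies $F'_i$ built from nullhomotopies $F_i$ of $d_i f$, observes that gluing over $A_i\cap A_j$ requires the coherence $d_{j-1}F_i=d_iF_j$, and records this as the obstruction that keeps the statement a conjecture. Two points of comparison are worth noting.

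First, for $k=2$ the paper actually \emph{proves} the statement (its Theorem~\ref{thm:smash_operations-two}), but not by assuming based nullhomotopies as you do. It takes arbitrary nullhomotopies $F_1,F_2$, glues $F'_1,F'_2$ over $S^l\times(Y\vee Y)$ (where they agree on $S^l\times *\times *$ since $\theta(c;*,*)=*$), and then repairs the resulting mismatch on $*\times(Y\vee Y)$ by concatenating with the reversed homotopy at the basepoint and contracting. Your based-nullhomotopy shortcut is cleaner and equally valid once you check (as one can, using $[S^l,-]=\langle S^l,-\rangle/\pi_1$ and that the $\pi_1$-orbit of the trivial class is trivial) that an unbased nullhomotopy of a based map can be replaced by a based one.

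Second, for general $k$ the paper does not pursue your obstruction-theoretic filtration of $T$. Instead, for a $K(\pi,1)$ operad $\ms C$ it rigidifies: passing to the equivalent simplicial operad $E\ms G/\ms H$ built from the fundamental-groups operad, every $\alpha\in\mc Z_k[S^1,\ms C]$ acquires a representative $f$ with $d_if$ \emph{literally} constant, so one may take all $F_i$ constant and the coherence $d_{j-1}F_i=d_iF_j$ is trivially satisfied. A transfer lemma (stability of the conjecture under equivalences of operads) then pulls the conclusion back to $\ms C$. Your obstruction viewpoint is a natural alternative, but the paper's rigidification sidesteps the obstruction computation entirely in the $K(\pi,1)$ case.

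One small correction: the difference $\mu'_k-\theta_\alpha$ is formed using the $H$-space structure on $Y$ itself (coming from $\theta_{e_2}$, which is group-like for connected CW $Y$), not after looping to $\om Y$.
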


We prove that this conjecture is true for the following two cases (Theorem \ref{thm:smash_operations-two} and Theorem \ref{thm:smash_operations-K(pi,1)}, respectively).

\begin{thm}
  The conjecture is true if 1) $k=2$, or 2) $Y$ is a path-connected topological $K(\pi,1)$ operad with the actions of symmetric groups free.
\end{thm}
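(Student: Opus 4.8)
Throughout write $W$ for the fat wedge of $S^l\times Y^k$, and cover it by $A_0=\{*\}\times Y^k$ together with the subspaces $A_j=S^l\times Y^{j-1}\times\{*\}\times Y^{k-j}$, $1\le j\le k$, each of which we identify with $S^l\times Y^{k-1}$ in the evident way. The plan is to build the homotopy $\theta_\alpha\simeq\mu'_k$ over $W$ by assembling homotopies over the pieces $A_j$, two observations making the pieces tractable. First, on $A_0$ the two maps are equal, since $\alpha$ is pointed and $\alpha(*)$ is the operad basepoint, which is also what defines the $k$-ary product $\mu_k$ on $Y$. Second, on $A_j$ the operad unit axiom identifies $\theta_\alpha|_{A_j}$ with $\theta_{d_j\alpha}$, while the homotopy-unit property of the basepoint of $Y$ (available because $\ms{C}(1)$ is path-connected) gives $\mu'_k|_{A_j}\simeq\mu'_{k-1}$; hence $\theta_\alpha\simeq\mu'_k$ on $A_j$ as soon as $d_j\alpha\colon S^l\to\ms{C}(k-1)$ is null-homotopic, which is exactly the standing hypothesis on $\alpha$. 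So on every piece of the cover the two maps are homotopic, and the whole problem reduces to choosing these piecewise homotopies so that they agree on the overlaps $A_i\cap A_j$.

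Part~1 ($k=2$) follows at once: apart from the overlaps with $A_0$, on which every homotopy above is constant, the only overlap is $A_1\cap A_2=S^l\times\{*\}\times\{*\}$, and there $\theta_\alpha$, $\mu'_2$, and all the chosen homotopies are constant at the basepoint of $Y$ (one plugs the nullary operation into both slots). Hence the pieces glue with no obstruction, using only that $\ms{C}$ is path-connected.

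For part~2 --- $\ms{C}$ a path-connected $K(\pi,1)$-operad whose symmetric actions are free, $Y$ a path-connected $\ms{C}$-space --- I would first reduce to $l=1$: if $l\ge2$ then $\alpha$ is already null-homotopic since $\ms{C}(k)$ is aspherical, so $\theta_\alpha\simeq\mu'_k$ on all of $S^l\times Y^k$; if $l=0$ then $\alpha$ is trivial since $\ms{C}(k)$ is path-connected. With $l=1$ the overlaps $A_i\cap A_j$ ($1\le i<j\le k$) are copies of $S^1\times Y^{k-2}$ and are no longer thin, so genuine compatibility must be arranged: the homotopy over $A_i$, restricted to $A_i\cap A_j$, is induced by a face of a chosen null-homotopy $\beta_i$ of $d_i\alpha$, and symmetrically over $A_j$, and the semisimplicial identities $d_id_j=d_{j-1}d_i$ show that the two sides match only when the $\beta_i$ are chosen with coherently matching faces. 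I would organize this as a diagram indexed by the proper nonempty subsets $T\subsetneq\{1,\dots,k\}$, the space attached to $T$ being the space of maps of a disk of dimension $|T|+1$ into $\ms{C}(k-|T|)$ that fill a prescribed boundary, with the face operations as the structure maps. Because each $\ms{C}(m)$ is a $K(\pi,1)$, every such filling space is weakly equivalent to $\Omega^{|T|+1}\ms{C}(k-|T|)$ and hence weakly contractible, so the homotopy limit of the diagram is nonempty and yields a coherent family of null-homotopies. (Freeness of the symmetric actions keeps the quotients $\ms{C}(m)/\Sigma_m$ aspherical as well, which is what the later sections on operations on $\pi_*Y$ require.) Inducting over the filtration $A_0\subset A_0\cup A_1\subset\cdots\subset W$ now glues these into a single homotopy $\theta_\alpha|_W\simeq\mu'_k|_W$. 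Finally, $Y$ being a path-connected $H$-space is group-like, so $\mu'_k-\theta_\alpha$ is defined and is null-homotopic on $W$; deforming it, via the cofibration $W\hookrightarrow S^l\times Y^k$, to a map that is strictly trivial on $W$ produces the factorization through $(S^l\times Y^k)/W=S^l\wedge Y^{\wedge k}$, i.e.\ the map $\bar\theta_\alpha$.

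The main obstacle is the coherence bookkeeping in part~2: setting up the diagram of filling spaces so that its homotopy limit genuinely parametrizes gluable families of homotopies over $W$, and verifying that the semisimplicial identities are exactly what make the diagram consistent, so that the contractibility of each filling space may legitimately be invoked. A secondary technical point is dovetailing these choices with the bottom-level homotopy-unit comparisons $\mu_k(\dots,*,\dots)\simeq\mu_{k-1}(\dots)$, and upgrading ``agreement up to a further homotopy rel boundary'' on each overlap to strict agreement, so that the induction over the filtration of $W$ does not accumulate error; this is handled by the homotopy extension property together with care in normalizing the choices made at each stage.
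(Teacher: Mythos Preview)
Your Part~1 argument follows the paper's decomposition of the fat wedge into $A_0,A_1,A_2$ and is essentially the same approach, but one step is asserted too quickly. You claim the homotopies on $A_1,A_2$ are constant on their overlaps with $A_0$. This is not automatic: the null-homotopy $F_i:I\times S^l\to\ms{C}(1)$ of $d_i\alpha$ need not fix the basepoint of $S^l$, so the induced homotopy on $A_i$, restricted to $A_0\cap A_i=\{*\}\times\{*\}\times Y$ (or $\{*\}\times Y\times\{*\}$), may trace out a nontrivial loop $t\mapsto\theta(F_i(t,*);y)$ in $Y$. The paper devotes the second half of the proof of Theorem~\ref{thm:smash_operations-two} to exactly this point, repairing it by concatenating $H$ with the reverse of $H(-;*;-,-)$ and then contracting the doubled loop via an explicit reparametrization. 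Your claim is salvageable by instead choosing the $F_i$ to be \emph{based} null-homotopies from the outset (possible since $l\ge1$, so unpointed null implies pointed null), but this should be said.

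Your Part~2 argument takes a genuinely different route from the paper. The paper does not attempt to build coherent null-homotopies directly inside $\ms{C}$; it instead rigidifies. It invokes the reconstruction theorem from \cite{Zhang:2011:GOHT} that a $K(\pi,1)$ operad with free symmetric action is equivalent to the explicit simplicial model $|E\ms{G}/\ms{H}|$ built from its fundamental-groups operad, and in that model each $\alpha$ has a canonical representative with $d_i\alpha$ \emph{strictly equal} to the constant, so the piecewise homotopies glue for free. The result is then transferred back along the operad equivalence via a comparison of free algebras (this is where path-connectedness of $Y$ and the CW hypothesis enter). Your obstruction-theoretic approach---organizing the coherence as a tower of filling problems and using $\pi_{\ge2}\ms{C}(m)=0$ to kill each obstruction---is more intrinsic and avoids the external reconstruction theorem, but as you yourself flag, it is presently a sketch: the filling space for $T$ depends on choices already made for all $T'\subsetneq T$, so one is really climbing a tower of fibrations rather than taking a single homotopy limit, and the same $A_0$-overlap issue from Part~1 reappears at every stage and must be folded into the induction.
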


The first case is proved by directly constructing a homotopy from $\theta_{\alpha}$ to $\mu'_k$. The proof for the second case given in this paper relies on a reconstruction of a $K(\pi,1)$ operad given in \cite{Zhang:2011:GOHT}. The approach is that this conjecture can be directly verified for the associated topological operad of a group operad, then it can be proved for a general $K(\pi,1)$ operad via the reconstruction of it from its fundamental groups operad.

For the case $\ms{C}_n$ and $\om^n_0 X$, The simplest smash operation (when $k=2$) is related to the Samelson product (they indeed coincide at least in homology) and its induced operation on homotopy groups is related to the Whitehead product. It is conjectured in this paper that they actually coincide.

By assembling all these induced operations on homotopy groups from smash operations on $\ms{C}$-spaces, we obtain the following conceptual description of the structure of the homotopy groups of $\ms{C}$-spaces (Theorem \ref{thm:structure_homotopy_groups-K(pi,1)}).

\begin{thm}
  If $\ms{C}$ is a topological $K(\pi,1)$ operad with the actions of symmetric groups free and $Y$ is a path-connected $\ms{C}$-space, then $\pi_*Y$ is a module over the free algebraic operad generated by all those $\alpha\in [S^l, \ms{C}]$ with $d_i \alpha$ trivial for all $i$. In particular, $\pi_* \om^2 X$ is a module over the free algebraic operad generated by the conjugacy classes of Brunnian braids modulo the conjugation action of pure braids.
\end{thm}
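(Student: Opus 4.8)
The strategy is to read an operad action on $\pi_*Y$ directly off the smash operations furnished by Theorem~\ref{thm:smash_operations-K(pi,1)} and then invoke the universal property of a free operad, where essentially all of the substance has already been supplied. Step one is to organize the generators. For $k\ge2$ and $l\ge1$ let $E_l(k)\subseteq[S^l,\ms{C}(k)]$ be the subset of classes $\alpha$ with $d_i\alpha$ trivial for every $i$, and put $E(k)=\coprod_{l\ge1}E_l(k)$, a set graded by $l$. Since $\Sigma_k$ acts on $\ms{C}(k)$ and permuting inputs permutes the faces $d_i$, this subset is $\Sigma_k$-stable, so $E=\{E(k)\}_{k\ge2}$ is a graded symmetric collection; let $\mc{O}$ be the free algebraic symmetric operad it generates, spanned by rooted trees whose internal vertices are labelled by elements of $E$, with degree the sum of the labels' degrees. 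The point of freeness is the tautology we will use: an $\mc{O}$-algebra structure on a graded abelian group $V$ is exactly a family of $\Sigma$-equivariant multilinear operations, one map $V^{\otimes k}\to V$ of degree $l$ for each element of $E_l(k)$, subject to no relations whatsoever.

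Step two is to exhibit such a family on $V=\pi_*Y$ by assigning to $\alpha$ the operation $\tilde\theta_\alpha=(\bar\theta_\alpha)_*(\iota;-)$ of the Introduction, and to check the three properties the universal property asks for. First, \emph{well-definedness}: $\tilde\theta_\alpha$ depends only on the class of $\alpha$ in $E(k)$ --- a free homotopy of $\alpha$, together with the indeterminacy in the null-homotopy of $\theta_\alpha\simeq\mu'_k$ on the fat wedge used to construct $\bar\theta_\alpha$, alters $\bar\theta_\alpha$ only by maps that become null after smashing with a sphere and passing to homotopy classes. Second, \emph{multilinearity}: because $\bar\theta_\alpha$ is defined on the smash $S^l\wedge Y^{\wedge k}$, precomposing the $j$-th sphere coordinate of a representative $\id\wedge f_1\wedge\cdots\wedge f_k$ with the pinch $S^{m_j}\to S^{m_j}\vee S^{m_j}$ and using that smashing distributes over wedges exhibits $\tilde\theta_\alpha(\dots,x_j+x_j',\dots)$ as a sum, so $\tilde\theta_\alpha$ is additive in each variable and factors through $\pi_{m_1}Y\otimes\cdots\otimes\pi_{m_k}Y$. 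Third, \emph{equivariance}: equivariance of the operad action, $\theta(\sigma c;y_1,\dots,y_k)=\theta(c;y_{\sigma^{-1}(1)},\dots,y_{\sigma^{-1}(k)})$, together with homotopy commutativity of the iterated product $\mu_k$ on the $\ms{C}$-space $Y$ (which follows from path-connectedness of $\ms{C}(k)$), shows that $\bar\theta_{\sigma\alpha}$ equals $\bar\theta_\alpha$ precomposed with the permutation of the $Y^{\wedge k}$ factors; unravelling this on homotopy classes produces the Koszul sign, giving $\tilde\theta_{\sigma\alpha}(x_1,\dots,x_k)=\pm\,\tilde\theta_\alpha(x_{\sigma^{-1}(1)},\dots,x_{\sigma^{-1}(k)})$. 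With these three in hand the universal property yields an operad map $\mc{O}\to\End_{\pi_*Y}$ into the endomorphism operad, i.e. $\pi_*Y$ is a module over $\mc{O}$.

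Step three is the specialization $\ms{C}=\ms{C}_2$, with $Y$ a path component of $\om^2X$ (itself a $\ms{C}_2$-space since $\ms{C}_2$ is connected). The hypotheses hold: $\ms{C}_2(k)$ is $\Sigma_k$-equivariantly homotopy equivalent to the configuration space $F(\R^2,k)$, which the Fadell--Neuwirth fibrations exhibit as a $K(P_k,1)$, and on which $\Sigma_k$ acts freely. Hence $[S^l,\ms{C}_2(k)]$ contains only the trivial class for $l\ne1$, while $[S^1,\ms{C}_2(k)]$ is the set of conjugacy classes of $P_k$ (free homotopy classes of loops into a $K(P_k,1)$). Identifying the $\Delta$-set face $d_i$ --- composition in $\ms{C}_2$ with the nullary operation in the $i$-th slot --- with deletion of the $i$-th strand, the condition ``$d_i\alpha$ trivial for all $i$'' picks out precisely the Brunnian braids $\Brun_k$; the ambient $P_k$-conjugacy relation is the conjugation action of pure braids; and the residual $\Sigma_k\cong B_k/P_k$-action (by conjugation through lifts to $B_k$) supplies the symmetric structure. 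Thus $E(k)=\Brun_k/(\text{conjugation by }P_k)$ for $k\ge2$ and is trivial otherwise, so $\mc{O}$ is the free algebraic symmetric operad generated by the conjugacy classes of Brunnian braids modulo the conjugation action of pure braids, and $\pi_*\om^2X$ is a module over it.

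The genuinely non-formal content is concentrated in two places. Theorem~\ref{thm:smash_operations-K(pi,1)}, the existence of the smash operations, does the heavy lifting and is assumed; granting it, the passage ``operations $\Rightarrow$ module over the free operad'' is pure formalism, and within step two the only delicate point is well-definedness, which forces one to track exactly how $\bar\theta_\alpha$ depends on the choices made in the proof of that theorem and to confirm that the ambiguity is invisible after stabilizing by smashing with spheres. For the $\om^2X$ statement the one substantive check is the identification of the $\Delta$-set structure on $[S^l,\ms{C}_2(-)]$ with strand deletion, requiring a careful comparison of little-cube composition with the Fadell--Neuwirth description of configuration spaces; this is the step I expect to be the main obstacle.
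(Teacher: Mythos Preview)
Your approach matches the paper's: the theorem there is stated with no argument beyond a bare \qed, treating it as a formal consequence of the existence of smash operations (Theorem~\ref{thm:smash_operations-K(pi,1)}), the multilinearity of the induced maps on homotopy groups, and the universal property of the free operad---exactly the three ingredients you assemble. Your write-up is in fact more careful than the paper's, and your flagging of well-definedness of $\bar\theta_\alpha$ (dependence on the chosen null-homotopy on the fat wedge) as the one genuinely delicate point is apt; the paper sidesteps this by passing through the model $|E\ms{G}/\ms{H}|$ where the faces $d_i\alpha$ are \emph{strictly} the basepoint and the homotopy is constant, but does not explicitly track well-definedness through the equivalence of operads either.
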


The identity map of $S^n$ ($n\geq 3$) particularly generates a family of elements in $\pi_*S^n$ under the action the conjugacy classes of Brunnian braids. It is also interesting to see (Remark \ref{rem:Brunnian-Lie}) that the conjugacy classes of Brunnian braids is related to $\mr{Lie}(n)$ due to Li and Wu \cite{LiWu:preprint:BGBBBHG}.

\textbf{Notations and conventions.}
For $\sigma,\tau\in S_n$, the product is $\sigma \cdot \tau := \tau \circ \sigma$, i.e. $(\sigma \cdot \tau)(i)= \tau (\sigma(i))$. Let $S_k$ acts on symmetric operads from left and on $X^k$ from right.

Given $k\geq 1$, $m_i\geq 0$ and $n_j\geq 0$, we shall often let $m= m_1+ \cdots+ m_k$, $n= n_1+ \cdots+ n_m$.

Two different label systems of $\Delta$-sets and simplicial sets are used here. One is the usual one starting from 0 and another one shifts 0 to 1, i.e., starting from 1. The latter is used for operads, like the symmetric groups operad, braid groups operad, etc.

For any symbol $a$, let $a^{(k)}$ denote the $k$-tuple $(a,\ldots,a)$.

For a normal subgroup $H$ of $G$, let $H/\mr{ca}(G)$ denote the set of conjugacy classes of $H$ modulo the conjugation action of $G$.

For two pointed spaces $X,Y$, let $\la X,Y \ra$ and $[X,Y]$ denote the sets of pointed and unpointed homotopy classes of maps $X\to Y$, respectively. Recall that \cite{Hatcher:2002:AT} if $X$ is a CW-complex and $Y$ is path-connected, then $\pi_1Y$ acts on $\la X,Y\ra$ and there is a natural bijection between $\la X,Y \ra/ \pi_1Y$ and $[X,Y]$; particularly $[S^1, Y]$ is the set of conjugacy classes of $\pi_1Y$.

Throughout this this paper, all topological spaces are assumed to be compactly generated Hausdorff spaces \cite{Steenrod:1967:CCTS}.

\textbf{Organization} of this paper is as follows. In section 2 we discuss some basic aspects of operads used in this paper. In section 3 we decompose the action of an operad $\ms{C}$ on a $\ms{C}$-space into many product operations and investigate their properties. In section 4 we investigate the existence of smash operations and the relation between the simplest smash operation and the Samelson product. In section 5 we investigate the induced operations on homotopy groups and their relation with the Whitehead product, and obtain a conceptual description of the structure of the homotopy groups of $\ms{C}$-spaces.

\section{Preliminaries}
In this section, we shall discuss some basic aspects of operads used in this paper.

For two points $x, x'$ in a space $X$, let $x\sim x'$ denote that $x,x'$ are in the same path-connected component of $X$.

\subsection{Product on $\ms{C}$-Spaces}
Let $\ms{C}$ be a nonsymmetric topological operad and $Y$ a $\ms{C}$-space with basepoint $*$. For $a\in \ms{C}(k)$ ($k\geq 1$),
$$\theta_a: Y^k\to a\times Y^k \into \ms{C}(k) \times Y^k \xra{\theta} Y$$
defines a product of $k$ variables. In addition, for $a\in \ms{C}(2)$, iteration of $\theta_a$ can be represented by $\gamma$ and $a$. For instance,
$$\theta_a \circ (\theta_a \times \id)= \gamma (a; \gamma (a;-),-)= \gamma (\gamma (a;a,1);-).$$

\begin{prop}
  If $a\sim b\in \ms{C}(k)$ ($k\geq 1$), then $\theta_a \simeq \theta_b$.
\end{prop}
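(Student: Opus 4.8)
The plan is to realize a path in $\ms{C}(k)$ joining $a$ to $b$ as a homotopy of product operations by feeding it through the structure map $\theta$.

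Concretely, since $a\sim b$ there is a path $\omega\colon I\to \ms{C}(k)$ with $\omega(0)=a$ and $\omega(1)=b$. I would then set
$$H\colon I\times Y^k \xra{\ \omega\times \id^k\ } \ms{C}(k)\times Y^k \xra{\ \theta\ } Y,$$
so that $H(t;y_1,\ldots,y_k)=\theta(\omega(t);y_1,\ldots,y_k)$. By construction $H(0;-)=\theta_{\omega(0)}=\theta_a$ and $H(1;-)=\theta_{\omega(1)}=\theta_b$, which is exactly a homotopy $\theta_a\simeq\theta_b$.

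The only thing requiring attention is the continuity of $H$, and this is the sole (purely point-set) obstacle. Because all spaces here are compactly generated Hausdorff, the map $\omega\times\id^k$ is continuous into the compactly generated product $\ms{C}(k)\times Y^k$, and $\theta$ is continuous by the definition of a $\ms{C}$-space; hence $H$ is continuous. Finally, if the $\ms{C}$-space structure on $Y$ is based in the sense that $\theta(c;*,\ldots,*)=*$ for every $c\in\ms{C}(k)$, then $H(t;*,\ldots,*)=*$ for all $t$, so the homotopy is based; without that assumption $H$ is still a perfectly good free homotopy, which is all the statement asserts.
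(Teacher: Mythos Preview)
Your proof is correct and is essentially identical to the paper's: choose a path from $a$ to $b$ in $\ms{C}(k)$ and compose $\omega\times\id^k$ with $\theta$ to obtain the required homotopy. The paper omits the continuity and basedness remarks, but these are harmless additions.
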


\begin{proof}
  A path $f: I\to \ms{C}(k)$ with $f(0)=a$ and $f(1)=b$ gives
  $$F: I\times Y^k \xra{f\times \id^k} \ms{C}(k) \times Y^k \xra{\theta} Y$$
  which is a homotopy from $\theta_a$ to $\theta_b$.
\end{proof}

For $a\in \ms{C}(2)$, clearly $\theta_a(*,*)=*$, $\theta_a(*,y)= \theta(a;*,y)= \theta(d_1a;y)$, $\theta_a(y,*)= \theta(a;y,*)= \theta(d_2a;y)$.

\begin{prop}
  Let $\ms{C}$ be a nonsymmetric operad and $a\in \ms{C}(2)$.
  \begin{itemize}
    \item[1)] If $d_1a\sim d_2a\sim 1\in \ms{C}(1)$, then $\theta_a$ is homotopic to some $\mu: Y\times Y\to Y$ with $\mu(*,y)= \mu(y,*)=y$ via a basepoint preserving homotopy. If $d_1a= d_2a= 1$, then $\theta_a(*,y)=y$, $\theta_a(y,*)=y$.
    \item[2)] If $a\sim b\in \ms{C}(2)$, then $\theta_a\simeq \theta_b$ via a basepoint preserving homotopy.
    \item[3)] If $\gamma (a;1,a)\sim \gamma (a;a,1)\in \ms{C}(3)$, then $\theta_a$ is homotopy associative, thus $Y$ is a homotopy associative $H$-space with $\theta_a$. If $\gamma (a;1,a)= \gamma (a;a,1)$, then $\theta_a$ is associative.
  \end{itemize}
\end{prop}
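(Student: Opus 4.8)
The plan is to reduce everything to the two operad axioms — the unit axiom $\gamma(a; 1, 1) = a$ (more precisely $\gamma(1; a) = a = \gamma(a; 1^{(k)})$) and the associativity of $\gamma$ — together with Proposition stating $a \sim b \Rightarrow \theta_a \simeq \theta_b$, which already handles the "up to homotopy" half of each part. First I would dispose of the strict statements, since these are pure axiom-chasing. For 1), if $d_1 a = d_2 a = 1$, then by definition $\theta_a(*, y) = \theta(a; *, y) = \theta(\gamma(a; 1_0, 1); y)$ — here $1_0 \in \ms{C}(0)$ is the basepoint-inserting element and the composite $\gamma(a; 1_0, 1) \in \ms{C}(1)$ is exactly what was denoted $d_1 a$ in the paragraph preceding the Proposition — hence $\theta_a(*,y) = \theta(d_1 a; y) = \theta(1; y) = y$ by the unit axiom applied to the $\ms{C}$-space structure; symmetrically $\theta_a(y, *) = \theta(d_2 a; y) = y$. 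For 3), if $\gamma(a; 1, a) = \gamma(a; a, 1)$ in $\ms{C}(3)$, then by the associativity axiom for the action $\theta$ one has $\theta_a \circ (\id \times \theta_a) = \theta(\gamma(a; 1, a); -, -, -)$ and $\theta_a \circ (\theta_a \times \id) = \theta(\gamma(a; a, 1); -, -, -)$, and these agree on the nose.

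Next I would promote these to the homotopy statements. For 1), given $d_1 a \sim d_2 a \sim 1$, pick paths $g_1$ from $d_1 a$ to $1$ and $g_2$ from $d_2 a$ to $1$ in $\ms{C}(1)$. The idea is to build a path $\tilde a(t)$ in $\ms{C}(2)$ starting at $a$ whose endpoint $b := \tilde a(1)$ satisfies $d_1 b = d_2 b = 1$ strictly: concretely take $\tilde a(t) = \gamma(a; g_1(t)', g_2(t)')$ for suitably reparametrized/interpolated paths so that the degeneracy faces get dragged to $1$ — one must be slightly careful that $\gamma(a; g_1(t), g_2(t))$ has $d_1$-face $\gamma(g_1(t); \ldots)$ rather than $g_1(t)$ itself, so the honest construction is to first move $a$ along a path to some $b$ with $d_i b = 1$ using the path-connectedness of the fibers of $d_i$, or more simply to observe that by Proposition it suffices to replace $a$ by any path-component representative $b$ with $d_i b = 1$, provided such a $b$ exists in the same component. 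Then $\mu := \theta_b$ has $\mu(*, y) = \mu(y, *) = y$ by the strict case, and $\theta_a \simeq \theta_b = \mu$; the homotopy from Proposition is $F(t, -) = \theta(\tilde a(t); -)$, which on $(*, y)$ and $(y, *)$ traces out a loop — to get a \emph{basepoint preserving} homotopy one further applies the path $g_i$ to straighten these loops to constants, i.e. one replaces $F$ by $F$ pre-composed with the straightening, an argument of the same flavor as showing an $H$-space map that is a homotopy unit can be rigidified. Part 2) is immediate: a path from $a$ to $b$ in $\ms{C}(2)$ induces, exactly as in the proof of the earlier Proposition, a homotopy $\theta_a \simeq \theta_b$, and since such a path may be chosen through the fiber structure, or simply since one only moves the second factor of $\gamma$, it restricts to a basepoint preserving homotopy (the basepoint behavior of $\theta_c$ depends only on $d_1 c, d_2 c$, which vary continuously and stay in a single component, so they can be simultaneously straightened). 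Part 3) in the homotopy version: given $\gamma(a; 1, a) \sim \gamma(a; a, 1)$, a path between them combined with the associativity axiom gives a homotopy between the two iterated products $\theta_a \circ (\id \times \theta_a)$ and $\theta_a \circ (\theta_a \times \id)$; combined with 1) (homotopy unitality), $Y$ with $\theta_a$ is a homotopy-associative $H$-space.

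The main obstacle I anticipate is \emph{not} the strict statements (pure bookkeeping with the operad axioms) but the insistence on \emph{basepoint preserving} homotopies in 1) and 2). The naive homotopy $\theta(\tilde a(t); -)$ moves the basepoint around a loop governed by the chosen path in $\ms{C}(k)$, and cancelling that loop requires either (a) choosing the path in $\ms{C}(2)$ so that its $d_1, d_2$ faces are \emph{constant}, which is possible because $\ms{C}(2) \to \ms{C}(1) \times \ms{C}(1)$, $c \mapsto (d_1 c, d_2 c)$, restricted over a point, still has the relevant component nonempty when $d_1 a \sim 1 \sim d_2 a$ — i.e. one needs the component of $a$ to contain an element lying over $(1,1)$ — or (b) a standard "whisker off the loop" argument using that the loop is null by construction. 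I would lean on option (b): note that the loop traced on $(*, y)$ is $\theta(g_1 * \bar g_1; y) \simeq \text{const}$ as soon as we take the concatenated path that goes $a \to b \to a$, which is nullhomotopic rel endpoints in $\ms{C}(2)$ — wait, that brings us back to $a$, so instead one takes the \emph{square} of homotopies (go to $b$, contract the loop using that $\pi_1$ of a path-connected space acting on $\la X, Y\ra$ can be trivialized on the relevant classes), and this is exactly the content of the remark in the Notations section that $\la X, Y\ra / \pi_1 Y \cong [X, Y]$. So the clean way to phrase it: the unpointed homotopy $\theta_a \simeq \theta_b$ exists by the earlier Proposition; since both $\theta_a|_{\text{basept}}$ and $\theta_b|_{\text{basept}} = \id$ are already basepoint-preserving maps and $S^0 \vee$-type domains are involved, the $\pi_1 Y$-indeterminacy can be absorbed, upgrading it to a pointed homotopy. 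This step is where care is genuinely needed; everything else is formal.
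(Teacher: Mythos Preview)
Your handling of the strict clauses in 1) and 3), and of the homotopy versions of 2) and 3), is correct and essentially matches the paper: a path in $\ms{C}(k)$ gives a homotopy of the corresponding $\theta$-maps, and any such homotopy satisfies $F(t;*,\ldots,*)=\theta(f(t);*,\ldots,*)=*$, so it is automatically basepoint preserving.

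The gap is in the homotopy version of 1). Your plan is to find $b\sim a$ in $\ms{C}(2)$ with $d_1b=d_2b=1$ strictly and set $\mu=\theta_b$. But such a $b$ need not exist: nothing in the hypotheses makes $(d_1,d_2):\ms{C}(2)\to\ms{C}(1)^2$ a fibration, or even guarantees that the component of $a$ meets the fiber over $(1,1)$. Your candidate path $\gamma(a;g_1(t),g_2(t))$ has $d_1$-face $\gamma(d_1a;g_2(t))$, not $g_1(t)$, so at $t=1$ one gets $d_1a$ back rather than $1$; you noticed this but did not repair it. Your fallback (b) still presupposes the existence of $b$, and the final appeal to the $\pi_1Y$-action goes the wrong way: upgrading an unpointed homotopy $\theta_a\simeq\mu_0$ to a pointed one replaces $\mu_0$ by some $\alpha\cdot\mu_0$, and the $\pi_1$-action deforms the map globally, so there is no reason $\alpha\cdot\mu_0$ retains the strict unit property $\mu(*,y)=\mu(y,*)=y$.

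The paper's argument avoids all of this by \emph{not} staying inside maps of the form $\theta_c$; the $\mu$ it produces is not $\theta_b$ for any $b$. It takes the paths $f_i:I\to\ms{C}(1)$ from $d_ia$ to $1$ and forms homotopies $H_i:Y\times I\to Y$, $H_i(y,t)=\theta(f_i(t);y)$, from $\theta_a|_{*\times Y}$ (resp.\ $\theta_a|_{Y\times *}$) to $\id_Y$; these satisfy $H_i(*,t)=*$. Together with $\theta_a$ on $(Y\times Y)\times\{0\}$ they define a map on $(Y\times Y)\times\{0\}\cup(Y\vee Y)\times I$, and the homotopy extension property of the pair $(Y\times Y,\,Y\vee Y)$ extends it to $(Y\times Y)\times I$. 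The time-$1$ slice is the desired $\mu$, and the homotopy is basepoint preserving because it agrees with $H_i$ on $(Y\vee Y)\times I$, where $(*,*)$ is sent to $*$ throughout. The key idea you are missing is this use of HEP in place of a lift in $\ms{C}(2)$.
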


\begin{proof}
  1) If $d_1a\sim d_2a\sim 1\in \ms{C}(1)$, then there are paths $f_1,f_2:I \to \ms{C}(1)$ such that $f_1(0)=d_1a$, $f_1(1)=1$, $f_2(0)=d_2a$, $f_2(1)=1$. Let $H_i: Y\times I\to Y$, $H_i(y,t)= \theta(f_i(t);y)$, $i=1,2$. Then $H_1$ is a homotopy from $\mu(*,-)$ to $\id$ with $H_1(*,t)= \theta(f_1(t);*)=*$, and $H_2$ is a homotopy from $\theta_a(-,*)$ to $\id$ with $H_2(*,t)= \theta(f_2(t);*)=*$. Moreover, $\theta_a$, $H_1$ and $H_2$ give a map
  $$(Y\times Y)\times 0 \cup (*\times Y \cup Y\times *) \times I \to Y.$$
  Using the homotopy extension property, $\theta_a$ is homotopic to some $\mu: Y\times Y\to Y$ with $\mu(*,y)= \mu(y,*)=y$ via a basepoint preserving homotopy. 2) and 3) can be proved similarly.
\end{proof}

\begin{prop}
  Let $\ms{C}$ be a symmetric operad, $a\in \ms{C}(2)$ and $\tau=(1,2)$ the transposition. If $a\sim \tau a \in \ms{C}(2)$, then $\theta_a$ is homotopy commutative. If $a= \tau a$, then $\theta_a$ is commutative. \qed
\end{prop}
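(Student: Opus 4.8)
The plan is to reduce both assertions to the equivariance of the $\ms{C}$-action together with the earlier proposition stating that $a\sim b\in\ms{C}(2)$ implies $\theta_a\simeq\theta_b$. The key point is to identify $\theta_{\tau a}$ explicitly. Since $\ms{C}$ is a symmetric operad and $Y$ a $\ms{C}$-space, the structure map $\theta\colon\ms{C}(2)\times Y^2\to Y$ is equivariant for the $S_2$-actions (acting on $\ms{C}(2)$ from the left and on $Y^2$ from the right, as fixed in the conventions), so $\theta(\tau a; y_1,y_2)=\theta(a;(y_1,y_2)\cdot\tau)=\theta(a;y_2,y_1)$. In other words $\theta_{\tau a}=\theta_a\circ T$, where $T\colon Y\times Y\to Y\times Y$ is the coordinate switch $T(y_1,y_2)=(y_2,y_1)$; for $k=2$ and the unique transposition $\tau$ this identity is unambiguous since $\tau=\tau^{-1}$.

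Granting this, the rest is immediate. If $a=\tau a$ then $\theta_a=\theta_{\tau a}=\theta_a\circ T$, which is precisely the statement that $\theta_a$ is commutative. If instead $a\sim\tau a\in\ms{C}(2)$, the preceding proposition gives $\theta_a\simeq\theta_{\tau a}$ (indeed via a basepoint preserving homotopy), and combining with $\theta_{\tau a}=\theta_a\circ T$ yields $\theta_a\simeq\theta_a\circ T$, i.e. $\theta_a$ is homotopy commutative. This mirrors the treatment of the unit and associativity conditions in the previous proposition, with the symmetric-group equivariance playing the role there played by the composition structure maps $\gamma$.

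The only point demanding care --- and it is a bookkeeping point rather than a genuine obstacle --- is to apply the chosen conventions (left action on the operad, right action on $Y^2$, and $\sigma\cdot\tau=\tau\circ\sigma$) consistently when extracting the equivariance identity $\theta_{\tau a}=\theta_a\circ T$; once that identity is in hand there is nothing further to prove.
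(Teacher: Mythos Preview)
Your proof is correct and is exactly the argument the paper has in mind: the proposition is marked with \qed\ and left without proof precisely because it follows the same pattern as the preceding proposition on associativity, using the $S_2$-equivariance of $\theta$ in place of the relations on $\gamma$. Your identification $\theta_{\tau a}=\theta_a\circ T$ together with the earlier fact that $a\sim b$ implies $\theta_a\simeq\theta_b$ is the intended one-line justification.
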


\begin{cor}
  If there is $a\in \ms{C}(2)$ with $d_1a\sim d_2a\sim 1$, then $Y$ is an $H$-space with $\theta_a$; if moreover $\gamma (a;1,a)= \gamma (a;a,1)$ (resp. $\gamma (a;1,a)\sim \gamma (a;a,1)\in \ms{C}(3)$), then $Y$ is an associative (resp. homotopy associative) $H$-space with $\theta_a$; or if moreover $a=\tau a$ (resp. $a\sim \tau a \in \ms{C}(2)$), then $Y$ is a commutative (resp. homotopy commutative) $H$-space with $\theta_a$. \qed
\end{cor}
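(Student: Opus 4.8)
The plan is to read off the corollary from the three propositions just proved, since every assertion it makes is already contained in them. First I would invoke part~1) of the proposition on $a\in\ms{C}(2)$: from $d_1a\sim d_2a\sim 1$ one gets a basepoint preserving homotopy $\theta_a\simeq\mu$ for some $\mu\colon Y\times Y\to Y$ with $\mu(*,y)=\mu(y,*)=y$, so $(Y,\mu)$ is an $H$-space. Equivalently, $\theta_a$ itself has two-sided homotopy units, since $\theta_a(*,y)=\theta(d_1a;y)\simeq\theta(1;y)=y$ and likewise $\theta_a(y,*)\simeq y$; this already makes $(Y,\theta_a)$ an $H$-space (up to homotopy), which is all the first clause asks for.

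For the associativity clauses I would appeal to part~3) of the same proposition. The two iterated triple products $\theta_a\circ(\theta_a\times\id)$ and $\theta_a\circ(\id\times\theta_a)$ are realized, as recorded in the text, by the operad elements $\gamma(\gamma(a;a,1);-)$ and $\gamma(\gamma(a;1,a);-)$; hence if $\gamma(a;1,a)=\gamma(a;a,1)$ in $\ms{C}(3)$ the two agree on the nose and $\theta_a$ is strictly associative, while if only $\gamma(a;1,a)\sim\gamma(a;a,1)$ a path between them pushed forward along $\theta$ gives a homotopy, so $\theta_a$ is homotopy associative. Combined with the first paragraph this yields that $Y$ is an associative (resp.\ homotopy associative) $H$-space under $\theta_a$. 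For the commutativity clauses I would use the proposition on symmetric operads in the same way: $a=\tau a$ forces $\theta_a$ to be commutative and $a\sim\tau a$ to be homotopy commutative. Assembling these observations proves the corollary.

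I do not expect a genuine obstacle here; the one point needing a little care is that part~1) produces a strictly unital model $\mu$, whereas the associativity and commutativity statements are phrased for $\theta_a$. The cleanest resolution is to keep everything in terms of $\theta_a$, using that a product with two-sided homotopy units already defines an $H$-space in the homotopy sense; if a strictly unital product is desired one instead transports the (homotopy) associativity and (homotopy) commutativity of $\theta_a$ across the basepoint preserving homotopy $\theta_a\simeq\mu$, and both properties are clearly homotopy invariant in this sense. That bookkeeping is the only content beyond citing the earlier propositions.
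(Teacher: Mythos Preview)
Your proposal is correct and matches the paper's approach exactly: the corollary is stated with a \qed and no proof, because each clause is read off directly from the propositions immediately preceding it, just as you describe. Your remark about the minor bookkeeping between $\theta_a$ and the strictly unital model $\mu$ is apt but, as you note, not a genuine obstacle.
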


\subsection{Basepoint and Simplicial Structure of Operads}
We discuss some basic facts about basepoint and simplicial structure of operads. Some of the following definitions and facts are already given in \cite{Zhang:2011:GOHT}.

\begin{defn}
  A basepoint of a nonsymmetric operad $\ms{C}$ is a sequence of points $\{e_k\}_{k\geq 0}$ with $e_1=1\in \ms{C}(1)$, $e_k\in \ms{C}(k)$ such that $\gamma (e_k; e_{m_1}, \ldots, e_{m_k}) \sim e_m$. A strict basepoint is a basepoint such that $\gamma (e_k; e_{m_1}, \ldots, e_{m_k})= e_m$.
\end{defn}

For use in this paper, a basepoint of an operad is too week, but a strict basepoint is too strong, so that we need a kind of basepoint in between. Consider the little 1-cubes (intervals) operad $\ms{C}_1$ \cite{May:1972:GILS}. Let $c_0=*\in \ms{C}_1(0)$ and $c_k=([0,\frac{1}{k}], [\frac{1}{k}, \frac{2}{k}], \ldots, [\frac{k-1}{k},1]) \in \ms{C}_1(k)$. Let $\ms{C}_1(k)_0 \subseteq \ms{C}_1(k)$ be the path-connected component of $c_k$. Then $(\ms{C}_1)_0= \{\ms{C}_1(k)_0\}_{k\geq 0}$ is naturally a nonsymmetric suboperad of $\ms{C}_1$. Suppose $\ms{C}$ is a nonsymmetric topological operad admitting a morphism of nonsymmetric operads $\eta: (\ms{C}_1)_0\to \ms{C}$. For instance, if $\ms{C}$ has a strict basepoint, then $\eta: (\ms{C}_1)_0 \to *\into \ms{C}$; for the little $n$-cubes operad $\ms{C}_n$, there is a canonical inclusion $\eta: (\ms{C}_1)_0 \into \ms{C}_n$. Call $\{\eta(c_k)\}_{k\geq 0}$ a \textbf{good basepoint} of $\ms{C}$ and also say $\ms{C}$ is \textbf{well pointed}. For two well pointed topological operads $\ms{C}$ with $\eta: (\ms{C}_1)_0 \to \ms{C}$ and $\ms{C}'$ with $\eta': (\ms{C}_1)_0 \to \ms{C}'$, a morphism $\psi: \ms{C}\to \ms{C}'$ is called a morphism of well pointed topological operads if $\eta'= \psi\circ \eta$.

For a nonsymmetric operad $\ms{C}$, define
$$d_i: \ms{C}(k+1)\to \ms{C}(k), \quad d_ia= \gamma (a;1^{i-1},*,1^{k-i})$$
for $1\leq i\leq k+1$. If $\ms{C}$ has a basepoint $\{e_k\}_{k\geq 0}$, define
$$s_i: \ms{C}(k)\to \ms{C}(k+1), \quad s_ia= \gamma (a;1^{i-1},e_2,1^{k-i})$$
for $1\leq i\leq k$. By definition,
$$d_ie_{k+1}= \gamma (e_{k+1}; 1^{i-1}, *, 1^{k-i})\sim e_k, \quad s_ie_k= \gamma (e_k; 1^{i-1}, e_2, 1^{k-i})\sim e_{k+1}$$
for all $i$ and $k$.

\begin{prop}
  $\ms{C}$ has a basepoint iff there exists $a\in \ms{C}(2)$ such that $d_1a\sim d_2a\sim 1\in \ms{C}(1)$ and $\gamma (a;1,a)\sim \gamma (a;a,1)$. \qed
\end{prop}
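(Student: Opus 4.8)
The plan is to prove the equivalence by constructing, in each direction, the object asserted to exist. For the forward direction, suppose $\ms{C}$ has a basepoint $\{e_k\}_{k\geq 0}$. Then I would take $a = e_2 \in \ms{C}(2)$. By the displayed computation just before the statement, $d_i e_2 = \gamma(e_2; 1^{i-1}, *, 1^{2-i}) \sim e_1 = 1 \in \ms{C}(1)$ for $i = 1, 2$, which gives $d_1 a \sim d_2 a \sim 1$. For the associativity-up-to-homotopy condition, the defining relation $\gamma(e_k; e_{m_1}, \ldots, e_{m_k}) \sim e_m$ applied with the appropriate indices yields both $\gamma(e_2; 1, e_2) = \gamma(e_2; e_1, e_2) \sim e_3$ and $\gamma(e_2; e_2, 1) = \gamma(e_2; e_2, e_1) \sim e_3$, whence $\gamma(a; 1, a) \sim \gamma(a; a, 1)$ since both are path-connected to $e_3$. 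This settles one implication directly.

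For the converse, suppose there exists $a \in \ms{C}(2)$ with $d_1 a \sim d_2 a \sim 1$ and $\gamma(a; 1, a) \sim \gamma(a; a, 1)$. I would build a basepoint $\{e_k\}_{k\geq 0}$ by iterating $a$: set $e_0 = *$, $e_1 = 1$, $e_2 = a$, and for $k \geq 3$ define $e_k = \gamma(a; e_{k-1}, 1)$ (left-iterated product). The work is then to verify the compatibility condition $\gamma(e_k; e_{m_1}, \ldots, e_{m_k}) \sim e_m$ where $m = m_1 + \cdots + m_k$. The strategy is induction, using the operad associativity axiom to expand $\gamma(e_k; e_{m_1}, \ldots, e_{m_k})$ into an iterated $\gamma$-expression in copies of $a$, and then repeatedly applying the hypothesis $\gamma(a; 1, a) \sim \gamma(a; a, 1)$ (together with Proposition of the form "$a \sim b$ implies $\gamma$ of them agree up to homotopy", i.e.\ path-connectedness is respected by $\gamma$ since $\ms{C}$ is a topological operad and $\gamma$ is continuous) to re-bracket everything into the standard left-iterated form defining $e_m$. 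The conditions $d_1 a \sim d_2 a \sim 1$ handle the cases where some $m_i = 0$: a factor $e_0 = *$ inserted into $a$ produces $d_i a \sim 1$, which can then be absorbed.

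The main obstacle I expect is the bookkeeping in this last verification: one must show that any two ways of parenthesizing a product of the same number of $1$'s (the "leaves") using $a$ are connected by a path in the appropriate $\ms{C}(k)$, which is a homotopy-coherence / associativity-pentagon type argument. The cleanest way to organize it is to prove by induction on $k$ the lemma that for any planar binary tree $T$ with $k$ leaves, the element of $\ms{C}(k)$ obtained by labeling internal nodes with $a$ is path-connected to $e_k$; the inductive step splits $T$ at the root into subtrees $T', T''$ with $k'$, $k''$ leaves, uses the inductive hypothesis to replace the subtree-elements by $e_{k'}, e_{k''}$, reduces to comparing $\gamma(a; e_{k'}, e_{k''})$ with $e_k = \gamma(a; e_{k-1}, 1)$, and finishes by a further induction peeling off one leaf at a time via $\gamma(a;1,a) \sim \gamma(a;a,1)$. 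Since all of this takes place inside path-components and $\gamma$ is continuous, every "$=$" may be relaxed to "$\sim$" throughout, so the argument goes through with the stated weak hypotheses; the strict-basepoint analogue would require genuine equalities and is not claimed here.
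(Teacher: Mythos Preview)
The paper offers no proof of this proposition (it is marked with \qed\ immediately after the statement), so there is nothing to compare against; your argument is the natural one and is correct. The forward direction is immediate from the definition, and for the converse your iterated construction $e_k=\gamma(a;e_{k-1},1)$ together with the binary-tree rebracketing lemma is the standard way to proceed. One small suggestion: rather than carry out the tree induction at the space level, observe that the basepoint condition $\gamma(e_k;e_{m_1},\ldots,e_{m_k})\sim e_m$ is a statement about path-components, so it suffices to work in the discrete operad $\pi_0\ms{C}$, where the hypotheses become genuine equalities $d_1[a]=d_2[a]=1$ and $\gamma([a];1,[a])=\gamma([a];[a],1)$; the rebracketing argument is then pure combinatorics (generalized associativity in a semigroup) with no homotopy-coherence worries at all, and the $m_i=0$ cases reduce via $d_i[a]=1$ exactly as you indicate.
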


\begin{prop}
  Let $\ms{C}$ be a nonsymmetric operad with a basepoint and $k\geq 1$. If $\ms{C}(k)$ is path-connected, then $\ms{C}(i)$ is also path-connected for each $i<k$.
\end{prop}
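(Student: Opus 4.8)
The statement to prove is: if $\ms{C}$ is a nonsymmetric operad with a basepoint and $\ms{C}(k)$ is path-connected for some $k\geq 1$, then $\ms{C}(i)$ is path-connected for each $i<k$. The natural tool is the degeneracy-type map $s_i:\ms{C}(k)\to\ms{C}(k+1)$ and the face-type map $d_i:\ms{C}(k+1)\to\ms{C}(k)$ defined just above, which require only a basepoint (not a strict one). The key point is that $d_i$ and $s_i$ are continuous (they are built from $\gamma$, which is continuous), so they send path-components to path-components, and the relations $d_i s_i \sim \id$ up to homotopy — in fact one checks $d_i s_i a=\gamma(a;1^{i-1},\gamma(e_2;*,1)\ \text{or similar},1^{k-i})$, and since $d_1 e_2\sim d_2 e_2\sim 1\in\ms{C}(1)$ one gets $d_i s_i a\sim a$ — let us me reduce the index.

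The plan is to prove the one-step statement: \emph{if $\ms{C}(k+1)$ is path-connected then $\ms{C}(k)$ is path-connected}, and then iterate downward from $k$ to $i$. First I would fix $k\geq 1$ and take any two points $a,b\in\ms{C}(k)$. Applying $s_1$ (which needs $k\geq 1$, fine) produces $s_1a,s_1b\in\ms{C}(k+1)$; since $\ms{C}(k+1)$ is path-connected there is a path from $s_1a$ to $s_1b$ in $\ms{C}(k+1)$. Now apply the continuous map $d_2:\ms{C}(k+1)\to\ms{C}(k)$ to this path to get a path in $\ms{C}(k)$ from $d_2 s_1 a$ to $d_2 s_1 b$. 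The remaining task is to identify $d_2 s_1 a$ with $a$ up to a path in $\ms{C}(k)$: compute $d_2 s_1 a=\gamma(\gamma(a;e_2,1^{k-1});1,*,1^{k-1})=\gamma(a;\gamma(e_2;1,*),1^{k-1})=\gamma(a;d_2 e_2,1^{k-1})$ using associativity of $\gamma$, and since $d_2 e_2\sim 1\in\ms{C}(1)$ and $\gamma$ is continuous, $d_2 s_1 a\sim a$ in $\ms{C}(k)$; similarly $d_2 s_1 b\sim b$. Concatenating the three paths yields a path from $a$ to $b$ in $\ms{C}(k)$, so $\ms{C}(k)$ is path-connected.

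Then I would finish by a trivial induction: given that $\ms{C}(k)$ is path-connected, the above (applied with $k$ replaced by $k-1$) shows $\ms{C}(k-1)$ is path-connected, and continuing down we reach every $i<k$; the base of the recursion is just the hypothesis at level $k$. (If $i=0$ is to be included one should double-check that $\ms{C}(0)$ is a single point or handle it via $d_1:\ms{C}(1)\to\ms{C}(0)$, but for $i\geq 1$ the argument above is uniform.)

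The main obstacle — really the only subtle point — is verifying the identity $d_2 s_1 a\sim a$ carefully, i.e. unwinding the nested $\gamma$'s with the correct associativity/unit axioms of the operad and using that the basepoint is only a \emph{weak} basepoint, so one has $d_j e_2\sim 1$ rather than $d_j e_2 = 1$; this forces the conclusion to be "up to a path" rather than "on the nose," which is exactly why path-connectedness (and not, say, a stronger statement) is what comes out. Everything else — continuity of $d_i,s_i$, concatenation of paths, the downward induction — is routine.
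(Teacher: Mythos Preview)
Your proposal is correct and follows essentially the same approach as the paper: both arguments lift an element of $\ms{C}(k-1)$ into the path-connected $\ms{C}(k)$ via a degeneracy-type map built from $e_2$, then push back down with a face map and use $d_j e_2\sim 1$ to recover the original element up to a path. The only cosmetic difference is that the paper uses the outer insertion $a\mapsto \gamma(e_2;1,a)$ followed by $d_1$ and connects every $a$ directly to the basepoint $e_{k-1}$, whereas you use $s_1$ followed by $d_2$ and connect two arbitrary points; the underlying mechanism is identical.
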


\begin{proof}
  For $a\in \ms{C}(k-1)$, $e_k\sim \gamma (e_2;1,a)$, thus $e_{k-1}\sim d_1e_k\sim d_1 \gamma (e_2;1,a)= \gamma (d_1e_2;a)\simeq \gamma (1;a)=a$. So the assertion holds.
\end{proof}

\begin{cor}
  Let $\ms{C}$ be a nonsymmetric operad and $k\geq 3$. $\ms{C}$ has a basepoint and $\ms{C}(k)$ is path-connected iff $\ms{C}(i)$ is also path-connected for each $i\leq k$. \qed
\end{cor}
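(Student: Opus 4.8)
The plan is to reduce both directions to the two Propositions immediately preceding this corollary, so that essentially no new argument is required; the only point deserving attention is the role of the hypothesis $k\geq 3$.

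First I would dispatch the forward implication. Assume $\ms{C}$ has a basepoint and $\ms{C}(k)$ is path-connected, with $k\geq 3$. Applying the Proposition asserting that a basepoint together with path-connectedness of $\ms{C}(k)$ forces $\ms{C}(i)$ to be path-connected for every $i<k$, I immediately obtain that $\ms{C}(i)$ is path-connected for all $i<k$; combining this with the hypothesis on $\ms{C}(k)$ itself gives path-connectedness of $\ms{C}(i)$ for every $i\leq k$.

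For the reverse implication, assume $\ms{C}(i)$ is path-connected for every $i\leq k$, with $k\geq 3$. Here the plan is to exhibit an element $a\in \ms{C}(2)$ satisfying the criterion of the earlier Proposition, namely $d_1a\sim d_2a\sim 1\in \ms{C}(1)$ and $\gamma(a;1,a)\sim \gamma(a;a,1)$. Since $\ms{C}(2)$ is path-connected it is in particular nonempty, so I may pick any $a\in \ms{C}(2)$. Then $d_1a$ and $d_2a$ both lie in $\ms{C}(1)$, which is path-connected, so $d_1a\sim d_2a\sim 1$; and $\gamma(a;1,a)$ and $\gamma(a;a,1)$ both lie in $\ms{C}(3)$, which is path-connected precisely because $k\geq 3$, so $\gamma(a;1,a)\sim \gamma(a;a,1)$. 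By the cited Proposition, $\ms{C}$ has a basepoint, and $\ms{C}(k)$ is path-connected by hypothesis, completing this direction.

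The only subtlety, and it is hardly an obstacle, is the convention that a path-connected space is nonempty, which is what legitimizes the choice of $a\in \ms{C}(2)$ above; everything else is a direct invocation of the two preceding results, and the hypothesis $k\geq 3$ enters only to make $\ms{C}(3)$ available as a path-connected space for the associativity-type comparison $\gamma(a;1,a)\sim \gamma(a;a,1)$.
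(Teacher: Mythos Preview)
Your proposal is correct and is exactly the argument the paper has in mind: the corollary is marked with a bare \qed, indicating it follows immediately from the two preceding Propositions, and your proof simply spells out that immediate deduction. The forward direction uses the second Proposition verbatim, and the reverse direction verifies the criterion of the first Proposition using path-connectedness of $\ms{C}(1)$, $\ms{C}(2)$, and $\ms{C}(3)$, with $k\geq 3$ needed precisely to guarantee that $\ms{C}(3)$ is available and path-connected.
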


\begin{prop}
  A nonsymmetric operad $\ms{C}$ is a $\Delta$-set. If $\ms{C}$ has a strict basepoint, then $\ms{C}$ is a simplicial set. If $\ms{C}$ has a basepoint, then $\ms{C}$ is a simplicial set up to homotopy, i.e. those simplicial identities hold up to homotopy. \qed
\end{prop}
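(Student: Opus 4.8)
The plan is to verify the relevant simplicial identities directly from the operad axioms, after unwinding the definitions: $d_i$ is the operation ``plug $*\in \ms{C}(0)$ into the $i$-th input slot and $1\in \ms{C}(1)$ into every other slot'', and (when a basepoint exists) $s_i$ is ``plug $e_2$ into the $i$-th slot and $1$ into every other slot''. All three assertions then reduce to bookkeeping inside the associativity and unit axioms for $\gamma$, the basepoint hypotheses entering only at a few boundary cases.

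For the $\Delta$-set structure I would fix $1\leq i<j$, write $d_id_ja= \gamma(\gamma(a;\ldots);\ldots)$ as a double composite, and use associativity of $\gamma$ to fold it into a single composite $\gamma(a;\ldots)$, distributing the outer inputs among the inner ones according to arity. Since a $1\in \ms{C}(1)$ absorbs exactly one input, a $*\in \ms{C}(0)$ absorbs none, and $\gamma(1;c)=c$, the result is ``$a$ with $*$ in slots $i$ and $j$ and $1$ elsewhere''. The identical manipulation carries $d_{j-1}d_ia$ to the same element, so $d_id_j=d_{j-1}d_i$ for $i<j$; only associativity, the unit axiom, and the chosen point $*\in \ms{C}(0)$ have been used.

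Assuming next a strict basepoint $\{e_k\}$ (so $e_0=*$, $e_1=1$ and $\gamma(e_k; e_{m_1},\ldots,e_{m_k})= e_m$), I would run the same argument for the degeneracies and mixed relations, in the shifted indexing the text uses for operads. For $s_is_j= s_{j+1}s_i$ with $i\leq j$, associativity collapses both sides to ``$a$ with $e_2$ in slots $i$ and $j$''; the sole boundary case $i=j$ needs $\gamma(e_2;e_2,1)= \gamma(e_2;1,e_2)$, which holds because both equal $e_3$ by the strict-basepoint coherence with $(m_1,m_2)=(2,1)$ and $(1,2)$. For $d_is_j$, the cases $i<j$ and $i>j+1$ fold, via associativity and $\gamma(1;c)=c$, to $s_{j-1}d_i$ and $s_jd_{i-1}$; in the cases $i=j$ and $i=j+1$ the $e_2$ in the doubled slot meets exactly one $*$ and one $1$, leaving $\gamma(e_2;*,1)$ or $\gamma(e_2;1,*)$ there, and $\gamma(e_2;e_0,e_1)= e_1= \gamma(e_2;e_1,e_0)$ collapses that slot to $1$, giving $d_js_j= d_{j+1}s_j= \id$. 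These exhaust the simplicial identities, so $\ms{C}$ is a simplicial set.

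For a merely weak basepoint I would replace every equality invoked above by a path and use continuity of $\gamma$. Here the relations $d_ie_{k+1}\sim e_k$ and $s_ie_k\sim e_{k+1}$ already recorded in the text do the job: $s_1e_2= \gamma(e_2;e_2,1)$ and $s_2e_2= \gamma(e_2;1,e_2)$ are both path-connected to $e_3$, while $d_1e_2= \gamma(e_2;*,1)$ and $d_2e_2= \gamma(e_2;1,*)$ are both path-connected to $1$. Choosing such paths, plugging them into the relevant slot of $\gamma(a;-,\ldots,-)$, and letting $a$ vary over $\ms{C}(k)$ yields, by continuity of $\gamma$, homotopies between exactly the composite self-maps of $\ms{C}$ that were equal in the strict case, which is the assertion that the simplicial identities hold up to homotopy. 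The work throughout is purely organizational — propagating arities and slot labels through the associativity axiom and enumerating the cases of the mixed relations — so I do not expect a real obstacle; the only point needing care is in the last step, namely checking that one is homotoping maps of spaces rather than individual points (immediate from continuity of $\gamma$) and keeping straight the normalization $e_0=*$, $e_1=1$ under which the $s_i$ are even defined.
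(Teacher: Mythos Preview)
The paper gives no proof of this proposition --- it is stated with an immediate \qed, leaving the verification of the simplicial identities to the reader. Your proposal is exactly the natural verification the author omits: unfold $d_i$ and $s_i$ as ``plug $*$ (resp.\ $e_2$) into slot $i$'', use associativity of $\gamma$ to collapse a double composite to a single one, and read off the identities; the strict-basepoint coherence $\gamma(e_k;e_{m_1},\ldots,e_{m_k})=e_m$ handles the boundary cases $s_is_i$ and $d_js_j=d_{j+1}s_j=\id$, and in the weak case one replaces those equalities by paths and pushes them through $\gamma$ by continuity. This is correct and is precisely what the \qed is standing in for.

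One small caution you already flag: the identity $d_js_j=\id$ needs $\gamma(e_2;*,1)=1=\gamma(e_2;1,*)$, which follows from the strict-basepoint axiom only once you know $e_0=*$. The paper does not state this explicitly, but it is implicit in the standing convention (following May) that $\ms{C}(0)$ is a single point; under that convention $e_0\in\ms{C}(0)$ forces $e_0=*$, and your argument goes through as written.
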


\begin{prop}
  For a topological operad $\ms{C}$, $\pi_0 \ms{C}$ is a discrete operad and thus is a $\Delta$-set. If $\ms{C}$ has a basepoint, $\pi_0 \ms{C}$ also has a basepoint and thus is a simplicial set. \qed
\end{prop}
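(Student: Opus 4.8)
The plan is to observe that $\pi_0$, regarded as a functor from compactly generated Hausdorff spaces to sets, preserves finite products, and then simply transport the operad structure of $\ms{C}$ through this functor. Concretely, I would first record that for spaces $X,Y$ in the category in use there is a natural bijection $\pi_0(X\times Y)\cong \pi_0 X\times \pi_0 Y$ — the path component of $(x,y)$ is the product of the path components of $x$ and $y$ — so that applying $\pi_0$ to the structure map $\gamma\colon \ms{C}(k)\times \ms{C}(m_1)\times \cdots\times \ms{C}(m_k)\to \ms{C}(m)$ yields a map $\pi_0\ms{C}(k)\times \prod_j \pi_0\ms{C}(m_j)\to \pi_0\ms{C}(m)$, and the unit $1\in \ms{C}(1)$ together with the $S_k$-actions in the symmetric case descend likewise. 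Every operad axiom — associativity of $\gamma$, the two unit axioms, and, when applicable, equivariance — is an identity between continuous maps holding in $\ms{C}$, so applying $\pi_0$ gives the corresponding identity of set maps. Thus $\pi_0\ms{C}$ is an operad, discrete because each $\pi_0\ms{C}(k)$ carries no topology, and the earlier proposition that every nonsymmetric operad is a $\Delta$-set yields the first assertion.

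For the second assertion, suppose $\ms{C}$ has a basepoint $\{e_k\}_{k\geq 0}$, so $e_1=1$ and $\gamma(e_k;e_{m_1},\ldots,e_{m_k})\sim e_m$ for all indices. Set $\bar e_k=[e_k]\in \pi_0\ms{C}(k)$. Then $\bar e_1=[1]$ is the unit of $\pi_0\ms{C}(1)$, and since $a\sim b$ in $\ms{C}(m)$ means exactly $[a]=[b]$ in $\pi_0\ms{C}(m)$, the relation $\gamma(e_k;e_{m_1},\ldots,e_{m_k})\sim e_m$ becomes the strict identity $\gamma(\bar e_k;\bar e_{m_1},\ldots,\bar e_{m_k})=\bar e_m$ in $\pi_0\ms{C}$. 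Hence $\{\bar e_k\}$ is in fact a strict basepoint of the discrete operad $\pi_0\ms{C}$, and the earlier proposition that an operad with a strict basepoint is a simplicial set completes the proof.

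There is essentially no serious obstacle here: the one point needing a line of justification is that $\pi_0$ preserves finite products, which is precisely where the standing hypothesis that all spaces are compactly generated Hausdorff is used, so that the categorical product agrees with the set-theoretic product on path components. The remainder is a routine check that each structure map and each axiom survives the functor, together with the observation that $\sim$ collapses to equality after applying $\pi_0$ — which is exactly what promotes a basepoint to a strict basepoint.
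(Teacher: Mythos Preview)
Your proof is correct and is exactly the routine verification the paper leaves implicit (the proposition is stated with a bare \qed). One small inaccuracy worth noting: $\pi_0$ preserves finite products for arbitrary topological spaces, since a path $I\to X\times Y$ is the same datum as a pair of paths into $X$ and $Y$ irrespective of any retopologization, so the compactly generated Hausdorff hypothesis is not in fact what underwrites that step.
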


Let $A$ a connected CW complex with a vertex as the basepoint. The most interesting case is that $A$ is a Moore space, in particular $A= S^n$, $M(\Z/p^r, n)$.

\begin{prop}
  For a symmetric topological operad $\ms{C}$, $[A, \ms{C}]= \{[A, \ms{C}(k)]\}_{k\geq 0}$ with discrete topology is naturally a symmetric operad with the following data:
  \begin{itemize}
    \item[1)] For $[f]\in [A, \ms{C}(k)]$, $[g_i]\in [A, \ms{C}(m_i)]$, $\gamma ([f];[g_1],\ldots,[g_k])$ is defined as the homotopy class of the following composite
        $$A \xra{\Delta} A^{k+1} \xra{f \times g_1 \times \cdots \times g_k} \ms{C}(k) \times \ms{C}(m_1) \times \cdots \times \ms{C}(m_k) \xra{\gamma} \ms{C}(m).$$
    \item[2)] The unit is $[A\to 1 \into \ms{C}(1)]$.
    \item[3)] $S_k$ acts on $[A, \ms{C}(k)]$ by $\sigma \cdot [f]= [\sigma \cdot f]$ where $(\sigma \cdot f)(a)= \sigma (f(a))$ for $\sigma\in S_k$, $a\in \ms{C}(k)$.
  \end{itemize}
  If $\ms{C}$ has a basepoint $\{e_k\}_{k\geq 0}$, $[A, \ms{C}]$ also has a basepoint $\{[A \to e_k\into \ms{C}(k)]\}_{k\geq 0}$ and thus is a simplicial set. Moreover, if $A$ is a co-$H$-group and the action of $\pi_1 \ms{C}$ on $\la A,\ms{C} \ra$ is trivial, then $[A, \ms{C}]= \la A,\ms{C} \ra$ is a simplicial group. There are analogous results for nonsymmetric topological operads. \qed
\end{prop}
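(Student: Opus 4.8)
The plan is to recognise $[A,-]$ on operads as a composite of two strong symmetric monoidal functors applied to $\ms{C}$. First, $\mr{Map}(A,-)\colon(\mr{Top},\times,*)\to(\mr{Top},\times,*)$ is strong symmetric monoidal: the canonical map $\mr{Map}(A,X)\times\mr{Map}(A,Y)\to\mr{Map}(A,X\times Y)$, sending $(f,g)$ to $(f\times g)\circ\Delta$, is a homeomorphism, it respects the associativity and symmetry isomorphisms, and $\mr{Map}(A,*)=*$. Hence $\mr{Map}(A,-)$ carries the symmetric topological operad $\ms{C}$ to a symmetric topological operad $\mr{Map}(A,\ms{C})$ whose structure maps are post-composition with those of $\ms{C}$; chasing the above homeomorphism shows that its composition, unit and $S_k$-action are exactly $A\xra{\Delta}A^{k+1}\xra{f\times g_1\times\cdots\times g_k}\ms{C}(k)\times\prod_i\ms{C}(m_i)\xra{\gamma}\ms{C}(m)$, $\mr{const}_1$, and $(\sigma\cdot f)(a)=\sigma(f(a))$. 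Second, $\pi_0\colon(\mr{Top},\times,*)\to(\Set,\times,*)$ is strong symmetric monoidal because $\pi_0(X\times Y)=\pi_0X\times\pi_0Y$. Composing, $[A,\ms{C}]=\pi_0\mr{Map}(A,\ms{C})$ is a discrete symmetric operad with precisely the data 1)--3); well-definedness on homotopy classes is just functoriality. (If a hands-on argument is wanted instead: associativity and unitality of $\gamma$ and equivariance for $[A,\ms{C}]$ reduce term by term to the operad axioms for $\ms{C}$ together with the coassociativity, counitality and cocommutativity of the iterated diagonals $A\to A^{n}$.)

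For the basepoint, let $\{e_k\}_{k\geq0}$ be a basepoint of $\ms{C}$. The constant maps $\mr{const}_{e_k}\in\mr{Map}(A,\ms{C}(k))$ form a basepoint of $\mr{Map}(A,\ms{C})$: under the homeomorphism above $\gamma(\mr{const}_{e_k};\mr{const}_{e_{m_1}},\ldots,\mr{const}_{e_{m_k}})=\mr{const}_{\gamma(e_k;e_{m_1},\ldots,e_{m_k})}$, and a path in $\ms{C}(m)$ from $\gamma(e_k;e_{m_1},\ldots,e_{m_k})$ to $e_m$ (one exists since $\gamma(e_k;e_{m_\bullet})\sim e_m$) induces a path of constant maps. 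Applying $\pi_0$, the classes $[A\to e_k\into\ms{C}(k)]$ form a basepoint of $[A,\ms{C}]$. Since $[A,\ms{C}]$ carries the discrete topology, on it $\sim$ agrees with $=$, so this basepoint is automatically strict; hence $[A,\ms{C}]$ is a simplicial set by the proposition above that an operad with a strict basepoint is a simplicial set.

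Now suppose $A$ is a co-$H$-group, so $\la A,-\ra\colon\mr{Top}_*\to\mr{Grp}$ is a functor, the group structure on $\la A,X\ra$ being induced by the comultiplication $A\to A\vee A$. Since $A$ is connected and, as throughout the applications, each $\ms{C}(k)$ is path-connected, the recollection in the conventions gives $\la A,\ms{C}(k)\ra/\pi_1\ms{C}(k)\cong[A,\ms{C}(k)]$, and triviality of the $\pi_1\ms{C}$-action upgrades this to a bijection, so $[A,\ms{C}]=\la A,\ms{C}\ra$ as simplicial sets. It remains to check that the faces and degeneracies are group homomorphisms. By the computation of the first paragraph, with all but one argument of $\gamma$ taken to be the appropriate basepoint class $[A\to e_j\into\ms{C}(j)]$, the face $d_i\colon[A,\ms{C}(k+1)]\to[A,\ms{C}(k)]$ is post-composition with the map $d_i\colon\ms{C}(k+1)\to\ms{C}(k)$, and $s_i$ is post-composition with $s_i\colon\ms{C}(k)\to\ms{C}(k+1)$. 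These maps of $\ms{C}$ preserve basepoints only up to homotopy ($d_ie_{k+1}\sim e_k$, $s_ie_k\sim e_{k+1}$); transporting along a path to the actual basepoint — well defined because $\pi_1$ acts trivially — makes them genuine pointed maps up to pointed homotopy, and since transport is compatible with the comultiplication the induced maps on $\la A,-\ra$ are homomorphisms. Thus $[A,\ms{C}]=\la A,\ms{C}\ra$ is a simplicial group. The nonsymmetric statement is identical, dropping the $S_k$-actions and the symmetry isomorphism.

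The formal core — the operad axioms and the basepoint claim — is routine; the only place requiring genuine care is the last step, where identifying $[A,\ms{C}]$ with $\la A,\ms{C}\ra$ relies on both the triviality of the $\pi_1$-action and the path-connectedness of the $\ms{C}(k)$, and verifying that the face and degeneracy maps (a priori basepoint-preserving only up to homotopy) induce \emph{group} homomorphisms requires the basepoint-transport argument sketched above.
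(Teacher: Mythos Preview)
The paper states this proposition with a \qed\ and gives no proof, treating it as routine. Your argument is correct and considerably more detailed than anything the paper offers.

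Your framing via strong symmetric monoidal functors is clean: recognising $[A,\ms{C}]=\pi_0\,\mr{Map}(A,\ms{C})$ as the image of $\ms{C}$ under the composite of two lax (indeed strong) symmetric monoidal functors dispatches the operad axioms in one stroke, and is arguably preferable to the hands-on verification the paper presumably has in mind. The basepoint argument is fine, and your observation that in the discrete operad $[A,\ms{C}]$ a basepoint is automatically strict (so the earlier proposition on simplicial structure applies directly) is the right way to close that part.

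You are also right to flag the simplicial-group clause as the only place requiring genuine care. Two points worth making explicit in a final write-up: first, the identification $[A,\ms{C}(k)]=\la A,\ms{C}(k)\ra$ does need each $\ms{C}(k)$ path-connected (since $A$ is connected, unpointed maps may land in other components), an assumption the paper leaves implicit but which you correctly supply; second, your basepoint-transport argument for why post-composition with $d_i,s_i$ yields group homomorphisms is correct---the cleanest justification is that $A$ is a cogroup object in $\mr{Ho}(\mr{Top}_*)$, so $\la A,-\ra$ is group-valued on that category, and transport along a path is an isomorphism in $\mr{Ho}(\mr{Top}_*)$, hence induces a group isomorphism. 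The triviality of the $\pi_1$-action then guarantees independence of the chosen path.
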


Some canonical operads admit additional structure which leads us to the following definition.

\begin{defn}
  A DDA-set is a sequence of sets $\{X_n\}_{n\geq 0}$ with deleting functions $d_i: X_{n+1} \to X_n$, doubling functions $s_i: X_{n+1}\to X_{n+2}$, and adding functions $d^i: X_n\to X_{n+1}$, $1\leq i\leq n+1$, $n\geq 0$, satisfying the following identities,
  $$d_i d_j= d_j d_{i+1}, \quad s_j s_i= s_{i+1} s_j, \quad d^j d^i= d^{i+1} d^j, \quad \textrm{for } j\leq i,$$
  $$d_j s_i =\left\{
  \begin{array}{cl}
    s_{i-1} d_j & j<i, \\
    \id & j=i, i+1, \\
    s_i d_{j-1} & j>i+1, \\
  \end{array}\right.
  d_j d^i =\left\{
  \begin{array}{cl}
    d^{i-1} d_j & j<i, \\
    \id & j=i, \\
    d^i d_{j-1} & j>i, \\
  \end{array}\right.
  s_j d^i =\left\{
  \begin{array}{cl}
    d^{i+1} s_j & j<i, \\
    d^i d^i & j=i, \\
    d^i s_{j-1} & j>i. \\
  \end{array}\right.$$
  A sequence of elements $\{e_n\}_{n\geq 0}$ with $e_n\in X_n$ is called a basepoint of a DDA-set $\{X_n\}_{n\geq 0}$ if $d_i e_n= e_{n-1}$, $s_i e_n= e_{n+1}$ and $d^i e_n= e_{n+1}$. A pointed DDA-set is a DDA-set with a basepoint. A morphism from a DDA-set $\{X_n\}_{n\geq 0}$ to another DDA-set $\{Y_n\}_{n\geq 0}$ is a sequence of functions $\{f_n: X_n\to Y_n\}_{n\geq 0}$ commuting with all $d_i$, $s_i$ and $d^i$. A pointed morphism from a pointed DDA-set $\{X_n\}_{n\geq 0}$ to another pointed DDA-set $\{Y_n\}_{n\geq 0}$ is a morphism preserving the basepoints. A DDA-group $\{G_n\}_{n\geq 0}$ is a DDA-set such that each $G_n$ is a group and all $d_i$, $s_i$ and $d^i$ are group homomorphisms.
\end{defn}

``DDA-set'' seems not a good name. We would like to use a better one if there is any. The sequence of sets $\{X^n\}_{n\geq 0}$ where $X$ is a pointed set, the sequence of symmetric groups $\{S_n\}_{n\geq 0}$, the sequence of braid groups $\{B_n\}_{n\geq 0}$ and the sequence of pure braid groups $\{P_n\}_{n\geq 0}$ are canonical examples of pointed DDA-set. $\{P_n\}_{n\geq 0}$ is moreover a DDA-group. A pointed DDA-set is obviously a contractible simplicial set. The three functions, deleting, doubling and adding functions are motivated by the example of braid groups.

We may also consider DDA-sets up to homotopy, i.e. those identities hold up to homotopy. The little $n$-cubes operad $\ms{C}_n$ ($n\geq 2$) is then not only a simplicial set up to homotopy, but also a DDA-set up to homotopy. Define $d^i: \ms{C}_n(k) \to \ms{C}_n(k+1)$ as follows. For $(c_1, \ldots, c_k)\in \ms{C}_n(k)$, suppose $c_j= c_{j1} \times \cdots \times c_{jn}$ where $c_{j1}, \ldots, c_{jn}: I\to I$ are affine functions. Define $d^i(c_1, \ldots, c_k)$ to be $(c'_1, \ldots, c'_{k+1})$, where $c'_j= (\frac{1}{2} c_{j1}) \times c_{j2} \times \cdots \times c_{jn}$ for $j<i$, $c'_j= (\frac{1}{2} c_{j-1,1}) \times c_{j-1,2} \times \cdots \times c_{j-1,n}$ for $j>i$, and $c'_i= (\frac{1}{2}+ \frac{1}{2} \id_I) \times \id_I^{n-1}$.

\begin{prop}
  $[S^1, \ms{C}_2]= \{P_n/ \mr{ca}(P_n)\}_{n\geq 0}$ is a DDA-set and $\pi_l\ms{C}_n= [S^l, \ms{C}_n]$ is an abelian DDA-group for $n\geq 3$. \qed
\end{prop}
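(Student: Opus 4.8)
The plan is to reduce everything to configuration spaces. Recall the $S_k$-equivariant homotopy equivalence $\ms{C}_n(k)\simeq F(\R^n,k)$, where $F(\R^n,k)$ denotes the space of ordered $k$-point configurations in $\R^n$, given by sending a tuple of little cubes to the tuple of their centres. Two topological inputs will do the work. First, from the Fadell--Neuwirth fibrations $F(\R^2,k)\to F(\R^2,k-1)$, whose fibre is $\R^2$ minus $k-1$ points and hence a wedge of circles, a standard induction shows that $F(\R^2,k)$ is aspherical with $\pi_1$ the pure braid group $P_k$; so $\ms{C}_2(k)$ is a $K(P_k,1)$. Second, the analogous fibrations over $\R^n$ for $n\geq3$ have $(n-2)$-connected fibre, so $F(\R^n,k)$ is $(n-2)$-connected and in particular $\ms{C}_n(k)$ is simply connected whenever $n\geq3$.

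For the first statement, I would combine the first input with the recollection in the introduction: since $S^1$ is a CW complex and $\ms{C}_2(k)$ is path-connected, $[S^1,\ms{C}_2(k)]=\la S^1,\ms{C}_2(k)\ra/\pi_1\ms{C}_2(k)$ is the set of conjugacy classes of $\pi_1\ms{C}_2(k)=P_k$, which is $P_k/\mr{ca}(P_k)$ in the paper's notation (here $P_k$ is its own normal subgroup, so this is just the set of conjugacy classes of $P_k$). The DDA-set structure is then inherited from $\ms{C}_2$: the text already supplies the deleting, doubling and adding maps $d_i,s_i,d^i$ on $\ms{C}_2$ satisfying the DDA identities up to homotopy, and applying the functor $[S^1,-]$ converts these into honest maps between the sets $P_\bullet/\mr{ca}(P_\bullet)$ for which those identities now hold strictly, because a homotopy of maps induces an equality of homotopy classes. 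Concretely, on $\pi_1$ the maps $d_i,s_i,d^i$ are the ``forget a strand'', ``double a strand'' and ``insert a trivial strand'' homomorphisms of pure braid groups, and being group homomorphisms they descend to conjugacy classes. One does \emph{not} obtain a group here, since $\ms{C}_2(k)$ is not simply connected and conjugacy classes carry no group structure.

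For the second statement, use the second input: for $n\geq3$ the space $\ms{C}_n(k)$ is simply connected, so $\pi_1$ acts trivially on $\la S^l,\ms{C}_n(k)\ra$ and therefore $[S^l,\ms{C}_n(k)]=\la S^l,\ms{C}_n(k)\ra=\pi_l\ms{C}_n(k)$. This group is abelian for every $l$: it vanishes for $l\leq1$ by simple-connectivity, and $\pi_l$ is abelian for $l\geq2$. The continuous maps $d_i,s_i,d^i$ induce group homomorphisms on $\pi_l$ (basepoint subtleties being vacuous on simply connected spaces), and, exactly as before, the DDA identities that hold up to homotopy on $\ms{C}_n$ become strict after applying $\pi_l$. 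Hence $\pi_l\ms{C}_n=[S^l,\ms{C}_n]$ is an abelian DDA-group.

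The one place calling for care --- bookkeeping rather than a genuine obstacle --- is to verify that under the equivalence $\ms{C}_n(k)\simeq F(\R^n,k)$ the operad-level operations $d_i,s_i,d^i$ correspond, on $\pi_1$ (resp.\ $\pi_l$), to the stated strand operations on (pure) braid groups, and that all of this is compatible with the $S_k$-actions; once the equivalence with configuration spaces is chosen naturally this is routine, and the substance is really carried by the two fibration inputs above together with the already-established fact that $\ms{C}_n$ is a DDA-set up to homotopy.
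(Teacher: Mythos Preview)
Your proposal is correct and follows exactly the route the paper intends: the proposition is stated with a \qed\ because it is meant to be immediate from the preceding facts that $\ms{C}_n$ is a DDA-set up to homotopy, that $[A,\ms{C}]$ is a (strict) simplicial set, and the identifications $\ms{C}_n(k)\simeq F(\R^n,k)$, $\pi_1\ms{C}_2(k)=P_k$, and simple-connectivity of $\ms{C}_n(k)$ for $n\geq 3$ recorded elsewhere in the paper. You have simply made explicit the Fadell--Neuwirth input and the passage from homotopy identities to strict ones under $[S^l,-]$, which is precisely what the author leaves to the reader.
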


\subsection{Structures on $\{[X\times Y^k, Y]\}_{k\geq 0}$} Recall that $\{\mr{Map}(Y^k,Y)\}_{k\geq 0}$ is an operad \cite{May:1972:GILS}. Similarly, we have
\begin{prop}
  $\{\mr{Map}(X\times Y^k, Y)\}_{k\geq 0}$ is a symmetric operad with the following data.
  \begin{itemize}
    \item[1)] For $f\in \mr{Map}(X\times Y^k, Y)$, $g_i\in \mr{Map}(X\times Y^{m_i}, Y)$, define $\gamma (f;g_1,\ldots,g_k)$ as the composite
        $$X \times Y^m \to X^{k+1} \times Y^m \to X \times (X\times Y^{m_1}) \times \cdots \times (X\times Y^{m_k}) \xra{\id_X \times g_1 \times \cdots \times g_k} X \times Y^k \xra{f} Y.$$
    \item[2)] The identity element is the projection $X\times Y\to Y$.
    \item[3)] $S_k$ acts on $\mr{Map}(X\times Y^k, Y)$ by $(\sigma\cdot f)(x;y_1,\ldots,y_k)= f(x; y_{\sigma^{-1}(1)}, \ldots, y_{\sigma^{-1}(k)})$ for $\sigma \in S_k$. \qed
  \end{itemize}
\end{prop}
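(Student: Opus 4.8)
The plan is to reduce the operad axioms to ones already established, by identifying $\{\mr{Map}(X\times Y^k,Y)\}_{k\geq 0}$, via the exponential law, with the endomorphism operad of $Y$ ``twisted'' by the space $X$ through the diagonal. Since everything takes place in compactly generated Hausdorff spaces, the exponential law gives a natural homeomorphism $\mr{Map}(X\times Y^k,Y)\cong \mr{Map}\big(X,\mr{Map}(Y^k,Y)\big)$, under which $f$ corresponds to its adjoint $\hat f\colon X\to \mr{Map}(Y^k,Y)$, $\hat f(x)=f(x;-)$. First I would unwind the definitions to check that the three pieces of structure in the statement transport, along this adjunction, to the ``pointwise'' structure on $\mr{Map}\big(X,\mr{Map}(Y^k,Y)\big)$ built from the known operad $\{\mr{Map}(Y^k,Y)\}$ of May (1972): the composite $\gamma(f;g_1,\ldots,g_k)$ corresponds to the map sending $x$ to $\gamma\big(\hat f(x);\hat g_1(x),\ldots,\hat g_k(x)\big)$ computed there, that is, the diagonal $X\to X^{k+1}$ followed by $\hat f\times\hat g_1\times\cdots\times\hat g_k$ and then the operad composition of $\{\mr{Map}(Y^k,Y)\}$; the identity element corresponds to the constant map at $\id_Y\in\mr{Map}(Y,Y)$; and $\sigma\cdot f$ corresponds to $x\mapsto\sigma\cdot\hat f(x)$. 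This is a short diagram chase comparing two evident maps out of $X\times Y^m$.

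Granting that identification, the operad axioms for $\{\mr{Map}(X\times Y^k,Y)\}$ follow formally from those for the endomorphism operad $\{\mr{Map}(Y^k,Y)\}$ together with the coassociativity and cocommutativity of the diagonal $X\to X^n$. Concretely: associativity of $\gamma$ holds because, after adjunction, the two bracketings of a triple composite both send a point $x$ to one and the same iterated operad composition in $\{\mr{Map}(Y^k,Y)\}$ of the values at $x$ of the maps involved --- the bracketings differ only in how $x$ is distributed through diagonals of $X$, and coassociativity of $\Delta$ forces those distributions to coincide; the two unit axioms use that $\Delta$ followed by a coordinate projection $X^n\to X$ is the identity; and the two equivariance axioms combine cocommutativity of $\Delta$ with the equivariance of the endomorphism operad. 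Continuity of $\gamma$ is inherited from continuity of the endomorphism-operad composition, of $\Delta$, and of the exponential correspondence. Deleting the $S_k$-actions and the equivariance clauses throughout yields the asserted analogue for nonsymmetric topological operads.

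Alternatively one can argue entirely inside $\{\mr{Map}(X\times Y^k,Y)\}$, verifying the axioms directly from the formulas in the statement; the unit axioms and the two equivariance axioms are then immediate diagram chases, literally parallel to the standard check that $\{\mr{Map}(Y^k,Y)\}$ is an operad. I expect the only real obstacle either way to be the bookkeeping in the associativity axiom: one must keep track of the several diagonal copies of $X$ created by the two ways of forming a triple composite $\gamma\big(\gamma(f;g_1,\ldots,g_k);h_{11},\ldots,h_{kn_k}\big)$ versus $\gamma\big(f;\gamma(g_1;h_{1*}),\ldots,\gamma(g_k;h_{k*})\big)$ and confirm that both unwind to the \emph{same} map $X\times Y^n\to Y$, namely the one feeding a single $x\in X$ into every intermediate $X$-slot. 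This is tedious but not subtle, and it is the sole point where anything beyond formal symbol-pushing is needed.
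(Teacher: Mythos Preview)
Your proposal is correct. The paper gives no proof at all: it simply writes ``Similarly, we have'' (referring to the endomorphism operad $\{\mr{Map}(Y^k,Y)\}$ of May) and places a \qed. So your second alternative --- direct verification parallel to the standard check for the endomorphism operad --- is exactly what the paper intends. Your primary route via the exponential law, identifying $\mr{Map}(X\times Y^k,Y)$ with $\mr{Map}\big(X,\mr{Map}(Y^k,Y)\big)$ and transporting the operad structure pointwise along the diagonal of $X$, is a genuinely different and somewhat more conceptual packaging: it replaces the associativity bookkeeping by an appeal to coassociativity of $\Delta$ together with the already-verified axioms for the endomorphism operad. Both approaches are routine; the adjunction version trades index-chasing for one extra identification step.
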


\begin{cor}
  $\{[X\times Y^k, Y]\}_{k\geq 0}$ is a symmetric operad with the following data.
  \begin{itemize}
    \item[1)] For $[f]\in [X\times Y^k, Y]$, $[g_i]\in [X\times Y^{m_i}, Y]$,
        $$\gamma ([f];[g_1],\ldots,[g_k]):= [\gamma (f;g_1,\ldots,g_k)].$$
    \item[2)] The identity element is the homotopy class of the projection $X\times Y\to Y$.
    \item[3)] $S_k$ acts on $[X\times Y^k, Y]$ by $\sigma\cdot [f]= [\sigma \cdot f]$ for $\sigma \in S_k$. \qed
  \end{itemize}
\end{cor}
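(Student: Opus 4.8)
The plan is to deduce this corollary from the preceding Proposition, which asserts that $\{\mr{Map}(X\times Y^k, Y)\}_{k\geq 0}$ is a symmetric topological operad, simply by passing to path components. Recall that in the compactly generated category assumed throughout, $[X\times Y^k, Y]= \pi_0 \mr{Map}(X\times Y^k, Y)$, and that $\pi_0$ is a product-preserving functor from spaces to sets: the canonical map $\pi_0(A_0\times \cdots \times A_n)\to \pi_0 A_0 \times \cdots \times \pi_0 A_n$ is a bijection for any spaces $A_i$. So applying $\pi_0$ to the entire operad structure on $\{\mr{Map}(X\times Y^k, Y)\}$ produces a symmetric operad in sets, and I claim this is exactly the structure described in the statement.

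In detail: the structure map
$$\gamma: \mr{Map}(X\times Y^k, Y) \times \textstyle\prod_{i=1}^k \mr{Map}(X\times Y^{m_i}, Y)\to \mr{Map}(X\times Y^m, Y)$$
of the Proposition is continuous, hence induces a map on $\pi_0$; precomposing with the product-decomposition bijection above gives a well-defined map
$$\gamma: [X\times Y^k, Y]\times \textstyle\prod_{i=1}^k [X\times Y^{m_i}, Y]\to [X\times Y^m, Y],$$
and by functoriality of $\pi_0$ it sends the class of $(f; g_1,\ldots,g_k)$ to the class of $\gamma(f; g_1,\ldots,g_k)$, which is formula 1). Independence of the choice of representatives is just the statement that $\pi_0\gamma$ is a function on path components; concretely, homotopies $F$ from $f$ to $f'$ and $G_i$ from $g_i$ to $g_i'$ assemble, by feeding them into the defining composite of $\gamma(f;g_1,\ldots,g_k)$ with the $I$-coordinate carried along, into a homotopy from $\gamma(f;g_1,\ldots,g_k)$ to $\gamma(f';g_1',\ldots,g_k')$. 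Likewise the unit of the $\mr{Map}$-operad is the projection $X\times Y\to Y$, whose path component gives item 2), and the $S_k$-action $f\mapsto \sigma\cdot f$ is a continuous (indeed coordinate-permuting) self-homeomorphism of $\mr{Map}(X\times Y^k, Y)$, hence descends to $\sigma\cdot [f]=[\sigma\cdot f]$, which is item 3).

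It then remains to record that the symmetric operad axioms hold, but each of them — associativity of $\gamma$, the two unit identities, and the $S_k$-equivariance relations — is a commuting diagram of continuous maps among the spaces $\mr{Map}(X\times Y^j, Y)$ established in the Proposition; applying the functor $\pi_0$, together with the (natural) product-decomposition bijections, turns each such diagram into the corresponding commuting diagram of sets, which is exactly the axiom for $\{[X\times Y^k, Y]\}_{k\geq 0}$. I do not expect any real obstacle here: the only points requiring a word of care are the continuity of the $\mr{Map}$-level structure maps and the compatibility of $\pi_0$ with finite products, both standard in the compactly generated setting. Alternatively, one may avoid the mapping-space language entirely and verify directly, using the explicit homotopies just indicated, that the formulas 1)--3) are independent of representatives, the axioms then being inherited verbatim from $\{\mr{Map}(X\times Y^k, Y)\}_{k\geq 0}$.
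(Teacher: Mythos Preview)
Your proposal is correct and is exactly the intended argument: the paper gives no proof beyond the \qed, treating the corollary as immediate from the preceding Proposition by passing to homotopy classes (equivalently, applying $\pi_0$ to the topological operad $\{\mr{Map}(X\times Y^k,Y)\}_{k\geq 0}$). Your write-up simply makes explicit the routine verifications the paper omits.
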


Let $Y$ be a homotopy associative $H$-space with product $\mu: Y^2 \to Y$. Let $\mu_k$ denote the iterated product $Y^k \xra{\mu_{k-1}\times \id} Y^2\xra{\mu} Y$, and $\mu'_k: X\times Y^k \xra{\mr{proj.}} Y^k \xra{\mu_k} Y$ (sometimes $[\mu'_k]$ is abbreviated to $\mu'_k$ for convenience).

\begin{prop}
  Suppose $Y$ is a homotopy associative $H$-space. Then $\{\mu'_k\}_{k\geq 0}$ is a basepoint of $\{\mr{Map}(X\times Y^k, Y)\}_{k\geq 0}$ and $\{[\mu'_k]\}_{k\geq 0}$ is a strict basepoint of $\{[X\times Y^k, Y]\}_{k\geq 0}$. \qed
\end{prop}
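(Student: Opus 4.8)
The plan is to deduce everything from the corresponding fact for the endomorphism operad $\{\mr{Map}(Y^k,Y)\}_{k\geq 0}$ of $Y$ itself. The projection $\mr{pr}\colon X\times Y^k\to Y^k$ induces a map $\mr{pr}^*\colon \mr{Map}(Y^k,Y)\to \mr{Map}(X\times Y^k,Y)$, $h\mapsto h\circ\mr{pr}$, and unravelling the two definitions of $\gamma$ given above one checks directly that $\mr{pr}^*$ is a (strict) morphism of symmetric operads: it is $S_k$-equivariant, it carries $\id_Y$ to the projection $X\times Y\to Y$ (the unit of the target operad), and $\mr{pr}^*\gamma(h;l_1,\dots,l_k)=\gamma(\mr{pr}^*h;\mr{pr}^*l_1,\dots,\mr{pr}^*l_k)$ on the nose --- the only point being that the diagonal $X\to X^{k+1}$ inserted by $\gamma$ in the target operad is harmless because each $\mr{pr}^*l_i$ ignores its $X$-coordinate. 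Since $\mr{pr}^*\mu_k=\mu'_k$ by definition and $\mr{pr}^*$ obviously preserves homotopies, it suffices to prove that $\{\mu_k\}_{k\geq 0}$ (with $\mu_0$ the constant map to the basepoint) is a basepoint of $\{\mr{Map}(Y^k,Y)\}_{k\geq 0}$; the statement about $\{[\mu'_k]\}$ will then follow by passing to homotopy classes, where the relation $\sim$ in the definition of basepoint becomes equality, i.e. a \emph{strict} basepoint.

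So it remains to show $\mu_1=\id_Y$ --- immediate from the definition of the iterated product --- and $\gamma(\mu_k;\mu_{m_1},\dots,\mu_{m_k})=\mu_k\circ(\mu_{m_1}\times\cdots\times\mu_{m_k})\simeq\mu_m$ as maps $Y^m\to Y$, for all $k\geq 0$ and $m_i\geq 0$. I would prove first the key lemma that \emph{every} bracketing of the ordered word $y_1\cdots y_m$ defines a map $Y^m\to Y$ homotopic to the left-iterated product $\mu_m$, by strong induction on $m$ using only homotopy associativity $\mu\circ(\id_Y\times\mu)\simeq\mu\circ(\mu\times\id_Y)$. Indeed, for $m\geq 3$ write a bracketing as $\mu(P'(y_1,\dots,y_j),P''(y_{j+1},\dots,y_m))$; by induction $P'\simeq\mu_j$ and $P''\simeq\mu_{m-j}$, so the bracketing is homotopic to $\mu(\mu_j(y_1,\dots,y_j),\mu_{m-j}(y_{j+1},\dots,y_m))$, which is $\mu_m$ already when $j=m-1$, and when $m-j\geq 2$ equals $\mu(\mu_j(\cdots),\mu(\mu_{m-j-1}(y_{j+1},\dots,y_{m-1}),y_m))$; one application of homotopy associativity (precomposed with $\mu_j\times\mu_{m-j-1}\times\id_Y$) together with the induction hypothesis in arity $m-1$ then bring it to $\mu(\mu_{m-1}(y_1,\dots,y_{m-1}),y_m)=\mu_m$. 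Granting the lemma, when all $m_i\geq 1$ the map $\mu_k\circ(\mu_{m_1}\times\cdots\times\mu_{m_k})$ is precisely one such bracketing, hence $\simeq\mu_m$; the cases where some $m_i=0$ reduce to this after using the homotopy unit $\mu(*,y)\simeq y\simeq\mu(y,*)$ of the $H$-space to cancel the constant slots (so that $\mu_k$ of a tuple with some entries equal to $*$ is homotopic to $\mu_{k'}$ of the remaining entries, $k'=\#\{i:m_i>0\}$, and $\sum m_i=\sum_{m_i>0}m_i$), and the case $k=0$ is vacuous.

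Putting the pieces together gives that $\{\mu'_k\}$ is a basepoint of $\{\mr{Map}(X\times Y^k,Y)\}$; and since $[\mu'_1]$ is the homotopy class of the projection $X\times Y\to Y$, which is the identity element of $\{[X\times Y^k,Y]\}$, and $\gamma([\mu'_k];[\mu'_{m_1}],\dots,[\mu'_{m_k}])=[\gamma(\mu'_k;\mu'_{m_1},\dots,\mu'_{m_k})]=[\mu'_m]$ by the homotopy just produced, $\{[\mu'_k]\}$ is a strict basepoint, as claimed. The only real work is the bracketing induction in the second paragraph, and there is no homotopy-theoretic obstacle there: although $A_\infty$-coherence would be needed to choose the homotopies compatibly, for the mere pointwise-homotopy statement it is enough that any two bracketings of $m$ letters are linked by a finite chain of single associativity moves, each realized by a homotopy through maps $Y^m\to Y$. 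All compositions of homotopies are unproblematic since we work with compactly generated Hausdorff spaces.
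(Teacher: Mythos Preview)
Your argument is correct. The paper gives no proof at all (the proposition is stated with an immediate \qed), so there is nothing to compare against; the author evidently regards the claim as a routine consequence of homotopy associativity, and your write-up supplies exactly the details one would expect. The reduction via the operad morphism $\mr{pr}^*\colon \mr{Map}(Y^k,Y)\to\mr{Map}(X\times Y^k,Y)$ is a clean way to organize the verification, and the bracketing induction is the standard argument that all parenthesizations of an $m$-fold product in a homotopy associative $H$-space are homotopic. One small remark: the paper does not explicitly define $\mu_0$, so your stipulation that $\mu_0$ is the basepoint inclusion $*\hookrightarrow Y$ is the natural choice (and is implicitly what the paper must intend for $\{\mu'_k\}_{k\geq 0}$ to make sense).
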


Let $Y$ be a homotopy associative and homotopy commutative $H$-space. Given $f\in \mr{Map}(X\times Y^k,Y)$, define $d_if\in \mr{Map} (X\times Y^{k-1},Y)$, $s_if, d^if\in \mr{Map} (X\times Y^{k+1},Y)$ as follows,
$$d_if= \gamma (f;\mu_1'^{i-1},*, \mu_1'^{k-i}), \quad s_if= \gamma (f;\mu_1'^{i-1}, \mu_2', \mu_1'^{k-i}),$$
$$d^if: X\times Y^{k+1}= X\times Y^{i-1} \times Y\times Y^{k+1-i} \to (X\times Y^{i-1} \times Y^{k+1-i}) \times Y \xra{f\times \id} Y\times Y \xra{\mu} Y,$$
$$(d^if) (x; y_1, \ldots, y_{k+1})= f(x; y_1, \ldots, \hat{y}_i, \ldots, y_{k+1})+ y_i.$$
Moreover, define $d_i[f]= [d_if]$, $s_i[f]=[s_if]$, $d^i[f]=[d^if]$. Note that generally $d^i([f]+[f']) \neq d^i[f]+ d^i[f']$, since
$$(d^i (f+f')) (x;y_1,\ldots,y_{k+1})= f(x; y_1, \ldots, \hat{y}_i, \ldots, y_{k+1})+ f'(x; y_1, \ldots, \hat{y}_i, \ldots, y_{k+1})+ y_i,$$
$$(d^if+ d^if') (x;y_1,\ldots,y_{k+1})= f(x; y_1, \ldots, \hat{y}_i, \ldots, y_{k+1})+ f'(x; y_1, \ldots, \hat{y}_i, \ldots, y_{k+1})+ 2y_i,$$
$$((d^if+ d^if')- d^i (f+f')) (x;y_1,\ldots,y_{k+1})= y_i.$$

\begin{lem}
  For $f, f'\in \mr{Map}(X\times Y^k, Y)$, $g_i\in \mr{Map}(X\times Y^{m_i}, Y)$,
  $$\gamma (f+f'; g_1, \ldots, g_k)= \gamma (f;g_1,\ldots,g_k)+ \gamma (f';g_1,\ldots,g_k).$$
  Thus
  $$\gamma ([f]+[f']; [g_1], \ldots, [g_k])= \gamma ([f];[g_1],\ldots,[g_k])+ \gamma ([f'];[g_1],\ldots,[g_k]).$$
\end{lem}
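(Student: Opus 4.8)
The plan is to unwind the definition of $\gamma$ in $\{\mr{Map}(X\times Y^k,Y)\}_{k\geq 0}$ given in the Proposition above and observe that the sum $f+f'$ enters only through the outer map, while the inner $g_i$ are untouched. First I would write out the composite defining $\gamma(f;g_1,\ldots,g_k)$ as a function of $(x;y_1,\ldots,y_m)\in X\times Y^m$: it sends this point to $f\bigl(x; g_1(x;y_1,\ldots,y_{m_1}),\ldots,g_k(x;y_{m-m_k+1},\ldots,y_m)\bigr)$, where the diagonal $X\to X^{k+1}$ and the reshuffling of coordinates just arrange the arguments. Crucially, the tuple
$$\bigl(x; g_1(x;-),\ldots,g_k(x;-)\bigr)\in X\times Y^k$$
depends only on the $g_i$ and the input, not on $f$. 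Denote this tuple by $\Phi(x;y_1,\ldots,y_m)$; then $\gamma(f;g_1,\ldots,g_k)=f\circ\Phi$ and likewise $\gamma(f';g_1,\ldots,g_k)=f'\circ\Phi$.

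Next I would use the definition of the sum $f+f'$ in $\mr{Map}(X\times Y^k,Y)$, namely $(f+f')(z)=\mu(f(z),f'(z))=f(z)+f'(z)$ for $z\in X\times Y^k$, where $\mu$ is the $H$-space product on $Y$. Substituting $z=\Phi(x;y_1,\ldots,y_m)$ gives
$$\gamma(f+f';g_1,\ldots,g_k)=(f+f')\circ\Phi=\mu\circ(f\circ\Phi,f'\circ\Phi)=\gamma(f;g_1,\ldots,g_k)+\gamma(f';g_1,\ldots,g_k),$$
which is the first claimed identity on the nose (no homotopy needed, since this is an equality of maps). Passing to homotopy classes and using that $\gamma$ on $[X\times Y^k,Y]$ and the sum are both defined representative-wise yields the second identity
$$\gamma([f]+[f'];[g_1],\ldots,[g_k])=\gamma([f];[g_1],\ldots,[g_k])+\gamma([f'];[g_1],\ldots,[g_k]).$$

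There is essentially no obstacle here: the content is simply that composition with a fixed map on the right is additive with respect to the pointwise $H$-space sum on the target, which is a formal consequence of the definitions. The only point requiring a small amount of care is bookkeeping the coordinate permutation and the repeated diagonal $X\to X^{k+1}$ so that one sees $\Phi$ really is independent of the outer slot; once that is written out, the statement is immediate. One should also remark (as the paper does in the asymmetry computation for $d^i$) that the analogous additivity genuinely \emph{fails} if the sum were inserted into one of the inner arguments $g_i$, so the one-sided nature of the statement is essential and not merely an artifact of the proof.
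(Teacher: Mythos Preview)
Your proof is correct and is essentially the same argument as the paper's: the paper encodes the observation that $\gamma(f;g_1,\ldots,g_k)=f\circ\Phi$ for a fixed $\Phi$ (independent of $f$) as a commutative diagram interchanging the diagonal $X\times Y^m\to(X\times Y^m)^2$ with the map $X\times Y^m\to X\times Y^k$, whereas you write the same thing out pointwise. The content is identical; only the presentation differs.
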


\begin{proof}
  The identity follows from the following commutative diagram
  \begin{diagram}
    X\times Y^m  &\rTo^{\Delta \times \id^m}  &X^{k+1} \times Y^m \\
    \dTo^{\Delta}  &&\dTo \\
    (X\times Y^m)^2  &&X \times (X\times Y^{m_1}) \times \cdots\\
    \dTo<{(\Delta \times \id^m)^2}  &&\dTo>{\id \times g_1 \times \cdots \times g_k} \\
    (X^{k+1} \times Y^m)^2  &&X\times Y^k \\
    \dTo  &&\dTo>{\Delta} \\
    (X\times (X\times Y^{m_1}) \times \cdots)^2  &\rTo<{\id \times g_1\times \cdots \times g_k}  &(X\times Y^k)^2  &\rTo^{f\times f'}  &Y^2  &\rTo^{\mu}  &Y.
  \end{diagram}
\end{proof}

\begin{rem}
  In general, $\gamma$ is not multilinear. In fact, generally,
  $$\gamma (f; g_1+g'_1, g_2, \ldots, g_k) \not \simeq \gamma (f;g_1,\ldots,g_k)+ \gamma (f;g'_1,\ldots,g_k),$$
  For example,
  $$\gamma (\mu'_k; g_1+g'_1, g_2, \ldots, g_k) \simeq g_1+g'_1 +g_2+\cdots+g_k,$$
  $$\gamma (\mu'_k;g_1,\ldots,g_k)+ \gamma (\mu'_k;g'_1,\ldots,g'_k) \simeq g_1+\cdots+g_k + g'_1+\cdots+g'_k.$$
  Moreover, generally
  $$\gamma (f; g_1+g'_1, \ldots, g_k+g'_k) \not \simeq \gamma (f;g_1,\ldots,g_k) + \gamma (f;g'_1,\ldots,g'_k).$$
\end{rem}

\begin{prop}
  For a homotopy associative and homotopy commutative $H$-space $Y$, $\{[X\times Y^k, Y]\}_{k\geq 0}$ is a simplicial abelian group and a DDA-set. \qed
\end{prop}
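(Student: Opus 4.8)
The plan is to verify, in order, that the structure maps $d_i$, $s_i$ (and for the simplicial abelian group only these) satisfy the simplicial identities, that each $[X\times Y^k, Y]$ is abelian, that the face and degeneracy maps are group homomorphisms, and finally that $d^i$ together with $d_i$, $s_i$ satisfy the DDA-set identities. Throughout I would work at the level of homotopy classes, so that the homotopy associativity and homotopy commutativity of $\mu$ turn every relation that holds ``up to reassociation and permutation of inputs'' into an honest equality in $[X\times Y^k, Y]$. The computations are all of the same flavour as the displayed identity in the Lemma just proved: one writes down the relevant composite of diagonals, projections, the $g_i$'s, $f$, and the iterated product $\mu_k$, and checks that two prescribed ways of assembling them agree after a homotopy supplied by associativity/commutativity.

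First I would record that $[X\times Y^k, Y]$ is an abelian group: the zero is $d_k$ applied enough times, i.e.\ the class of the trivial map $X\times Y^k \to *\to Y$ (equivalently $\mu_0'$), the sum is $(f,f')\mapsto \mu\circ(f\times f')\circ\Delta$, inverses come from a homotopy inverse of $\mu$ (which exists since $Y$ is a homotopy associative $H$-space whose $\pi_0$ is a group — here $Y$ is path-connected, or more precisely we only need the grouplike condition implicit in ``$H$-space''), associativity and commutativity of $+$ come from those of $\mu$ up to homotopy. Then the Lemma already gives additivity of $\gamma$ in its \emph{first} slot, which is exactly the statement that $d_i$ and $s_i$, being of the form $\gamma(-;\mu_1'^{i-1},*,\mu_1'^{k-i})$ and $\gamma(-;\mu_1'^{i-1},\mu_2',\mu_1'^{k-i})$, are group homomorphisms; for $d^i$ one uses instead the explicit formula $(d^if)(x;\vec y)=f(x;y_1,\dots,\hat y_i,\dots,y_{k+1})+y_i$ and checks $d^i(f+f')=d^if+d^if'+ (\text{class of }(x;\vec y)\mapsto y_i)$ — wait, this is \emph{not} additive, as the Remark preceding the Lemma explicitly warns; so for the simplicial-abelian-group assertion only $d_i,s_i$ need to be homomorphisms, and the Proposition should be read as: the $(d_i,s_i)$ make it a simplicial abelian group, and separately the $(d_i,s_i,d^i)$ make it a DDA-set (a structure of sets, with no linearity of $d^i$ required).

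Next I would verify the simplicial identities among the $d_i$ and $s_i$. Since $Y$ need only be homotopy associative/commutative, these hold up to homotopy, hence on the nose in $[X\times Y^k,Y]$. Concretely $d_id_j=d_jd_{i+1}$ for $i\ge j$ reduces to the operad axioms for $\gamma$ applied to the element $\mu_k'$ together with the fact that inserting two basepoints $*$ in positions $i,j$ is independent of order; $s_is_j$, $d_is_j$ are handled the same way, using that $\mu_2'$ inserted in one slot and then $\mu_2'$ again is, after associativity, the same as $\mu_3'$ flattened, and that $d_i s_i = d_{i+1}s_i = \id$ because $\gamma(\mu_2';*,\mathrm{id})\simeq\mathrm{id}\simeq\gamma(\mu_2';\mathrm{id},*)$ by the unit condition on the $H$-space product. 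For the DDA-set identities involving $d^i$, one unwinds the explicit formula: $d^id^j$ adds $y_i$ and $y_j$ in a prescribed order, equal after commutativity to adding them in the other order, giving $d^jd^i=d^{i+1}d^j$ for $j\le i$; $d_jd^i=\id$ for $j=i$ because adding $y_i$ and then deleting coordinate $i$ returns $f$; $d_jd^i=d^{i-1}d_j$ or $d^id_{j-1}$ otherwise by bookkeeping; and $s_jd^i$, $d_jd^i$ with $j=i$ reduce to $\mu\circ(\mathrm{id}\times\mathrm{id})\circ\Delta$ being the ``squaring'' map, matching $d^id^i$. The one genuine subtlety — the part I expect to be the main obstacle — is the mixed relation $s_j d^i = d^i d^i$ for $j=i$ and, relatedly, $d_j s_i=\id$ for $j=i,i+1$: these are exactly where one needs the homotopy-commutativity (not just associativity) of $\mu$ to move a $y_i$ past a block $\mu_2'(y_i,y_i)=y_i+y_i$ and identify it with the doubled adding function; writing out the two composites as maps $X\times Y^{k+1}\to Y$ and exhibiting the commuting homotopy cell-by-cell is the only place where care is required. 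Everything else is a routine diagram chase of the type the Lemma's proof illustrates, so I would state those verifications briefly and spell out only this commutativity-dependent step in full.
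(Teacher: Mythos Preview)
The paper gives no proof for this proposition (it is marked with a bare \qed), so there is nothing to compare against beyond the implicit claim that the verification is routine. Your outline is essentially correct and is exactly the bookkeeping the paper is suppressing: use the Lemma's additivity of $\gamma$ in its first slot to see that $d_i,s_i$ are group homomorphisms, then check the simplicial and DDA identities by unwinding the formulas and invoking homotopy associativity/commutativity as needed; your reading of the statement (that $(d_i,s_i)$ give a simplicial abelian group while $(d_i,s_i,d^i)$ give only a DDA-\emph{set}, since $d^i$ is not additive) is the intended one.

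Two small corrections. First, the paper's displayed computation gives $d^i(f+f')=d^if+d^if'-(\text{projection onto }y_i)$, with a minus sign, not plus; your conclusion that $d^i$ is not additive is of course unaffected. Second, the identity $d_js_i=\id$ for $j=i,i+1$ needs only the unit property $*+y\simeq y\simeq y+*$, not commutativity. The places where homotopy commutativity is genuinely required are the relation $d^jd^i=d^{i+1}d^j$ for $j\le i$ (the two added summands $w_j$ and $w_{i+1}$ appear in opposite orders on the two sides) and $s_id^i=d^id^i$ (one side adds $w_i+w_{i+1}$, the other $w_{i+1}+w_i$), together of course with the abelianness of the group structure itself.
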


\section{Product Operations on $\ms{C}$-Spaces}
In this section, the action of a (nonsymmetric or symmetric) operad $\ms{C}$ on a $\ms{C}$-space is broken down into many small pieces, product operations.

Let $\ms{C}= \{\ms{C}(k)\}_{k\geq 0}$ be a (nonsymmetric or symmetric) operad and $Y$ be a $\ms{C}$-space with action $\theta: \ms{C}(k) \times Y^k \to Y$, $k\geq 0$. For $a\in \ms{C}(k)$, define
$$\theta_a: Y^k \to a\times Y^k \into \ms{C}(k)\times Y^k \xra{\theta} Y.$$
If $a\sim b\in \ms{C}(k)$ then $\theta_a\simeq \theta_b$. Thus there are obvious functions
$$\Theta_{\ms{C}}^{0,k}: \pi_0 \ms{C}(k)\to [Y^k,Y], \quad \alpha=[a] \mapsto \theta_{\alpha}= [\theta_a].$$
For $f: S^l \to \ms{C}(k)$ ($l>0$), define
$$\theta_f: S^l \times Y^k \xra{f \times \id^k} \ms{C}(k) \times Y^k \xra{\theta} Y.$$
Clearly $\theta_f\simeq \theta_g$ if $f\simeq g$. Thus we have functions
$$\Theta_{\ms{C}}^{l,k}: [S^l, \ms{C}(k)] \to [S^l \times Y^k, Y], \quad \alpha= [f] \mapsto \theta_{\alpha}= [\theta_f].$$
In particular,
$$\Theta_2^k= \Theta_{\ms{C}_2}^{1,k}: P_k/\mr{ca}(P_k)= [S^1, \ms{C}_2(k)] \to [S^1 \times (\om^2 X)^k, \om^2 X], \quad \alpha \mapsto \theta_{\alpha},$$
and for $n\geq 3$,
$$\Theta_n^{l,k}= \Theta_{\ms{C}_n}^{l,k}: \pi_l \ms{C}_n(k)= [S^l, \ms{C}_n(k)] \to [S^l \times (\om^n X)^k, \om^n X], \quad \alpha \mapsto \theta_{\alpha}.$$
For $\alpha\in \pi_0\ms{C}(k)$ or $\alpha\in [S^l, \ms{C}(k)]$, $\theta_{\alpha}$ is obviously natural with respect to $\ms{C}$-maps. Call these $\theta_{\alpha}$ product operations on $Y$.

\begin{rem}
  In the construction of product operations, $S^l$ can be replaced by a general path-connected space, in particular by a Moore space.
\end{rem}

\subsection{Behavior of Product Operations in Homology}
For $\ms{C}= \ms{C}_n$ ($n\geq 2$), the behavior of these product operations in homology can be calculated following Cohen's calculation \cite{Coh-Lad-May:1976:HILS} of the homology of $\ms{C}_n$-spaces. From this calculation, we shall see that many product operations are nontrivial.

For convenience, the following two composites (with integral coefficients)
\begin{align*}
  H_*\ms{C}(k) \otimes (H_*Y)^{\otimes k} \to H_*(\ms{C}(k) \times Y^k) \xra{\theta_*} H_*Y, \\
  H_lS^l \otimes (H_*Y)^{\otimes k} \to H_*(S^l \times Y^k) \xra{(\theta_{\alpha})_*} H_*Y
\end{align*}
are still denoted by $\theta_*$ and $(\theta_{\alpha})_*$, respectively. Clearly
$$(\theta_{\alpha})_* (\iota \otimes y_1 \otimes \cdots \otimes y_k)= \theta_* (\bar{\alpha} \otimes y_1 \otimes \cdots \otimes y_k),$$
where $\iota$ is the fundamental class of $H_lS^l$ and $\bar{\alpha}= \alpha_*(\iota)$ is the homology class of $\alpha$.

\begin{prop}
  Let $a_{ij}$, $1\leq i< j\leq k$, denote the standard generators of $H_{n-1} (\ms{C}_n(k))$ ($n\geq 2$). Then
  $$\theta_* (a_{ij} \otimes x_1 \otimes \cdots \otimes x_k)= \lambda_{n-1} (x_i, x_j) x_1\cdots \hat{x}_i \cdots \hat{x}_j \cdots x_k$$
  up to sign, where $x_1, \ldots, x_k\in H_*\om^n X$ and $\lambda_{n-1} (x,y)= (-1)^{(n-1)|x|+1} \theta_* (\iota \otimes x\otimes y)$ is the Browder operation \cite{Coh-Lad-May:1976:HILS}.
\end{prop}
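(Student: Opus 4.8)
The plan is to follow Cohen's calculation of $H_*\Omega^n X$ from \cite{Coh-Lad-May:1976:HILS} and track how the standard generators $a_{ij}$ of $H_{n-1}(\ms{C}_n(k))$ interact with the Dyer--Lashof/Browder structure. First I would recall the known structure of $H_*\ms{C}_n(k)$: it is generated multiplicatively by the classes $a_{ij}$, $1 \le i < j \le k$, coming from the maps $S^{n-1} \to \ms{C}_n(k)$ that rotate the $i$-th little cube around the $j$-th one while keeping the others fixed (equivalently, the configuration-space description $F(\R^n, k)$ and the Arnold/Cohen presentation). The class $a_{ij}$ is the image of the fundamental class $\iota_{n-1}$ under the map $\rho_{ij}: S^{n-1} \to \ms{C}_n(k)$ that is adjoint to the little-cubes configuration where cubes $i$ and $j$ swing around each other.

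Next I would identify $\theta_*(a_{ij} \otimes x_1 \otimes \cdots \otimes x_k)$ with $(\theta_{\rho_{ij}})_*(\iota_{n-1} \otimes x_1 \otimes \cdots \otimes x_k)$ using the identity already stated in the excerpt, $(\theta_\alpha)_*(\iota \otimes y_1 \otimes \cdots \otimes y_k) = \theta_*(\bar\alpha \otimes y_1 \otimes \cdots \otimes y_k)$ with $\bar\alpha = \alpha_*(\iota)$. The key point is that $\rho_{ij}$ factors: on the coordinates other than $i,j$ the map is constant (the basepoint little cubes), while on coordinates $i,j$ it is exactly the generator $\ms{C}_1 \times S^{n-1} \to \ms{C}_n(2)$ that defines the Browder operation $\lambda_{n-1}$. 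Concretely, $\rho_{ij}$ is the composite $S^{n-1} \to \ms{C}_n(2) \xrightarrow{\text{block-insertion}} \ms{C}_n(k)$ where the second map inserts the two cubes into slots $i$ and $j$ and fills the remaining $k-2$ slots with basepoint cubes. Applying $\theta_*$ and using the operad structure relations (that inserting a basepoint operad element $e_1$ acts as the identity, and the compatibility of $\theta$ with $\gamma$), $\theta_*(a_{ij} \otimes x_1 \otimes \cdots \otimes x_k)$ becomes the product in $H_*\Omega^n X$ of: the Browder operation applied to the slots $i,j$, namely $\lambda_{n-1}(x_i, x_j)$ by definition of $\lambda_{n-1}$ as $(-1)^{(n-1)|x|+1}\theta_*(\iota \otimes x \otimes y)$, times the remaining entries $x_1 \cdots \hat x_i \cdots \hat x_j \cdots x_k$ fed through basepoint slots, which act by multiplication. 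The signs are bookkept via the Koszul rule for permuting $\iota_{n-1}$ and the $x_\ell$ past each other and the Eilenberg--Zilber map, which is exactly the "up to sign" caveat in the statement.

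The main obstacle I expect is the sign and the precise identification of $\rho_{ij}$ with the Browder generator under the standard conventions for $H_*\ms{C}_n(k)$; the Browder operation is defined in \cite{Coh-Lad-May:1976:HILS} with a specific sign and a specific orientation of $S^{n-1}$, and one must check that the "standard generator" $a_{ij}$ of $H_{n-1}(\ms{C}_n(k))$ is normalized compatibly (there is a genuine choice here, as $a_{ij}$ versus $a_{ji}$ and orientations differ by signs, which is why the proposition is only asserted up to sign). A secondary technical point is justifying that pushing the other inputs $x_\ell$ ($\ell \ne i,j$) through basepoint little cubes yields honest multiplication in the Pontryagin ring $H_*\Omega^n X$ — this follows from the fact that the basepoint little $n$-cubes operad element composed into $\ms{C}_n(k)$ recovers the iterated-loop-product structure, together with the coherence of $\theta$ with $\gamma$, but it requires one to be careful that the relevant composite of little cubes lies in the right path component. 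Once these identifications are in place the formula drops out, and I would then remark (as the next part of the paper does) that since $\lambda_{n-1}$ is generically nonzero in $H_*\Omega^n X$, many of the product operations $\theta_\alpha$ are nontrivial in homology.
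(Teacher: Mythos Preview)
Your proposal is correct and follows essentially the same approach as the paper: represent $a_{ij}$ by a map $S^{n-1}\to \ms{C}_n(k)$ that factors through $\ms{C}_n(2)$ via the operad composition $\gamma$, then use the compatibility of $\theta$ with $\gamma$ to split $\theta_*(a_{ij}\otimes x_1\otimes\cdots\otimes x_k)$ into the Browder operation on the $(i,j)$ slots and the Pontryagin product on the remaining slots. The only cosmetic difference is that the paper first treats $a_{12}$ explicitly via $a_{12}=\gamma_*(e_0\otimes a_{12}\otimes e_0)$ with $e_0\in H_0\ms{C}_n(2)$ and $e_0\in H_0\ms{C}_n(k-2)$, and then deduces the general $a_{ij}$ case from the $S_k$-equivariance of $\theta$, whereas you handle arbitrary $i,j$ directly.
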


\begin{proof}
  $a_{ij}$ can be represented by a map $S^{n-1}\to \ms{C}_n(k)$ given by an inclusion $S^{n-1}\to \R^n$ illustrated by the following figure.
  \begin{center}
    \includegraphics[width=0.4\textwidth]{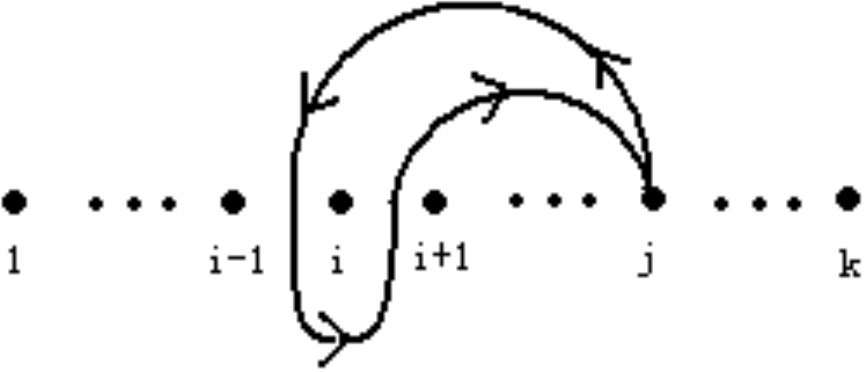}
  \end{center}
  (This map is homotopic to two other maps illustrated by the following two figures respectively.
  \begin{center}
    \includegraphics[width=0.4\textwidth]{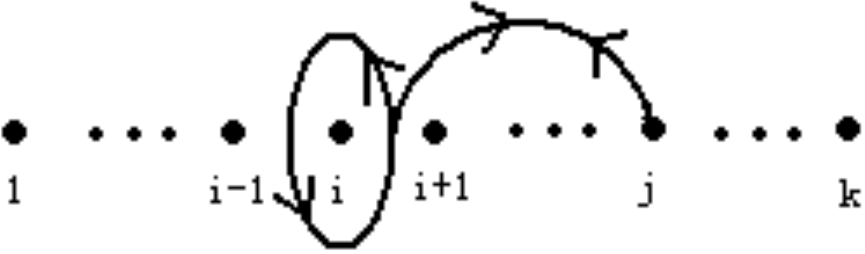}
    \hspace{0.1\textwidth}
    \includegraphics[width=0.4\textwidth]{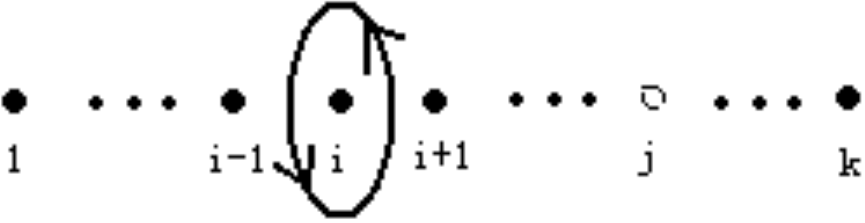}
  \end{center}
  The latter is given in Cohen's calculation.) Note that
  $$\gamma_*: H_*\ms{C}_n(2) \otimes H_*\ms{C}_n(2) \otimes H_*\ms{C}_n(k-2) \to H_*\ms{C}_n(k), \quad \gamma_*(e_0 \otimes a_{12} \otimes e_0)= a_{12}.$$
  Then
  \begin{align*}
    \theta_*(a_{12} \otimes x_1 \otimes \cdots \otimes x_k) & = \theta_*(\gamma_*(e_0 \otimes a_{12} \otimes e_0) \otimes x_1 \otimes \cdots \otimes x_k) \\
    & = \theta_* (e_0 \otimes \theta_*(a_{12} \otimes x_1 \otimes x_2)\otimes \theta_*(e_0\otimes x_3 \otimes \cdots \otimes x_k)) \\
    & = \theta_*(a_{12} \otimes x_1 \otimes x_2) (x_3 \cdots x_k) \\
    & = (-1)^{(n-1)|x|+1} \lambda_{n-1} (x_1, x_2) x_3\cdots x_k.
  \end{align*}
  The others follow from the identity for $a_{12}$ and the equivariance of $\theta$, c.f. sections 7, 12 and 13 of Cohen's calculation of the homology of $\ms{C}_n$-spaces \cite{Coh-Lad-May:1976:HILS}.
\end{proof}

\begin{cor}
  Let $A_{ij}$, $1\leq i< j\leq k$, denote the standard generators of $\pi_1 \ms{C}_2(k)= P_k$. Then
  $$(\theta_{A_{ij}})_* (\iota \otimes x_1 \otimes \cdots \otimes x_k)= \lambda_1 (x_i, x_j)  x_1\cdots \hat{x}_i \cdots \hat{x}_j \cdots x_k$$
  up to sign, where $x_1, \ldots, x_k\in H_*\om^2 X$. \qed
\end{cor}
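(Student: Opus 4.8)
The statement is an immediate specialization of the preceding Proposition to the case $n=2$, combined with the comparison of the Hurewicz image of the pure braid generators with the standard homology generators. So the plan is as follows. First I would recall that $\ms{C}_2(k)$ is homotopy equivalent to the configuration space $F(\R^2,k)$ of $k$ ordered points in the plane, whose fundamental group is $P_k$, and whose standard generators $A_{ij}$ ($1\leq i<j\leq k$) are represented by the loops in which the $i$-th and $j$-th points encircle one another while the other points stay fixed. These are exactly the maps $S^1\to \ms{C}_2(k)$ depicted in the figure used in the proof of the Proposition for $n=2$. Writing $h\colon \pi_1\ms{C}_2(k)\to H_1\ms{C}_2(k)$ for the Hurewicz homomorphism, one then has $h(A_{ij})= (A_{ij})_*(\iota)= a_{ij}$, the standard generator of $H_1\ms{C}_2(k)$ appearing in the Proposition. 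Since $H_1\ms{C}_2(k)$ is the abelianization of $P_k$, the Hurewicz map here is just abelianization, so this step is only a matter of matching the two conventions, which the figures make transparent.

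Second, I would invoke the identity established just before the Proposition, namely $(\theta_{\alpha})_*(\iota\otimes x_1\otimes\cdots\otimes x_k)= \theta_*(\bar\alpha\otimes x_1\otimes\cdots\otimes x_k)$ with $\bar\alpha= \alpha_*(\iota)$, applied to $\alpha= A_{ij}$. Together with the first step this yields
$$(\theta_{A_{ij}})_*(\iota\otimes x_1\otimes\cdots\otimes x_k)= \theta_*(a_{ij}\otimes x_1\otimes\cdots\otimes x_k),$$
and the Proposition with $n=2$ then evaluates the right-hand side as $\lambda_1(x_i,x_j)\, x_1\cdots \hat{x}_i\cdots \hat{x}_j\cdots x_k$ up to sign, which is precisely the asserted formula.

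The only point requiring care — and hence the main obstacle — is the identification in the first step: one must check that the geometric loop one calls the standard generator $A_{ij}$ of $P_k$ maps under Hurewicz to exactly the generator $a_{ij}$ used in the Proposition, rather than to some other basis element or to a combination of them. Because the conclusion is only claimed up to sign, it suffices to know that $h(A_{ij})$ is the basis element corresponding to the $(i,j)$-loop, which is the standard description of $H_*F(\R^2,k)$ already used implicitly in the proof of the Proposition; no further computation is needed.
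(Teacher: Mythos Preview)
Your proposal is correct and is exactly the intended argument: the paper marks the corollary with \qed\ and gives no proof, treating it as an immediate consequence of the preceding Proposition (for $n=2$) together with the displayed identity $(\theta_{\alpha})_*(\iota\otimes\cdots)=\theta_*(\bar\alpha\otimes\cdots)$, once one notes that the Hurewicz image of $A_{ij}$ is $a_{ij}$. Your discussion of the only delicate point --- matching $h(A_{ij})$ with $a_{ij}$ --- is appropriate and is precisely what the figures in the proof of the Proposition are meant to make transparent.
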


The sign can be made explicitly. It is omitted here just for simplicity.

Consequently, many of these operations are essential; namely, not null-homotopic, nor homotopic to the summation
$$\mu'_k: S^l \times (\om^n X)^k \xra{\mr{proj.}} (\om^n X)^k \xra{\mu_k} \om^n X.$$

\subsection{Structures Preserved by Product Operations}
These product operations preserve certain combinatorial structures.

\begin{prop}
  For $l\geq 0$, $\Theta_{\ms{C}}^{l,*}= \{\Theta_{\ms{C}}^{l,k}\}_{k\geq 0}$ is a morphism of nonsymmetric (resp. symmetric) operads and preserves basepoints if $\ms{C}$ is nonsymmetric (resp. symmetric).
\end{prop}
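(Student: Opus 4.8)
The plan is to verify directly that $\Theta_{\ms{C}}^{l,*}$ respects the operad structure maps $\gamma$, the unit, the symmetric group action, and the basepoint, reducing everything to the fact that the $\ms{C}$-action $\theta$ is itself an operad morphism $\ms{C} \to \{\mr{Map}(Y^k,Y)\}_{k\geq 0}$ (or, in the $X\times Y^k$ version, to the compatibility of $\theta$ with iteration and diagonals). First I would fix notation: for $l=0$ the source is $\pi_0\ms{C}$, a discrete operad by the proposition above; for $l>0$ the source $[S^l,\ms{C}]$ is a symmetric operad with the $\gamma$ given by pre-composing with the diagonal $\Delta\colon S^l\to (S^l)^{k+1}$, as spelled out earlier, and the target is the symmetric operad $\{[S^l\times Y^k,Y]\}_{k\geq 0}$ whose $\gamma$ was described in the corollary in \S2.3. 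So the statement to check is that the square relating these two $\gamma$'s commutes on the nose (in homotopy classes).

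The key computation is the unwinding of $\theta_{\gamma(\alpha;\beta_1,\dots,\beta_k)}$. Representing $\alpha$ by $f\colon S^l\to\ms{C}(k)$ and $\beta_i$ by $g_i\colon S^l\to\ms{C}(m_i)$, the element $\gamma([f];[g_1],\dots,[g_k])$ is represented by $S^l\xra{\Delta}(S^l)^{k+1}\xra{f\times g_1\times\cdots\times g_k}\ms{C}(k)\times\prod\ms{C}(m_i)\xra{\gamma}\ms{C}(m)$. Composing with $\theta$ and the identity on $Y^m$, and using that $\theta$ is a $\ms{C}$-action — i.e. $\theta\circ(\gamma\times\id)$ equals the iterated application of $\theta$ in the operad $\mr{Map}(Y^k,Y)$ — one rewrites $\theta_{\gamma(\alpha;\beta_1,\dots,\beta_k)}$ as the composite $S^l\times Y^m\to (S^l)^{k+1}\times Y^m\to S^l\times(S^l\times Y^{m_1})\times\cdots\times(S^l\times Y^{m_k})\xra{\id\times\theta_{g_1}\times\cdots\times\theta_{g_k}}S^l\times Y^k\xra{\theta_f}Y$, which is precisely the definition of $\gamma(\theta_\alpha;\theta_{\beta_1},\dots,\theta_{\beta_k})$ in $\{[S^l\times Y^k,Y]\}_{k\geq 0}$. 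The unit goes to the unit because the unit $1\in\ms{C}(1)$ acts as the identity of $Y$, so $\theta$ of the constant map at $1$ is the projection $S^l\times Y\to Y$, which is the unit of the target. Equivariance is immediate: $(\sigma\cdot f)$ composed with $\theta$ permutes the $Y$-factors by $\sigma$, matching the $S_k$-action on the target, using the equivariance of the $\ms{C}$-action. For $l=0$ the same diagram chase applies verbatim with path-components in place of homotopy classes, and in the nonsymmetric case one simply drops the equivariance clause.

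For basepoints: when $\ms{C}$ has a basepoint $\{e_k\}$, the target $\{[S^l\times Y^k,Y]\}_{k\geq 0}$ has the strict basepoint $\{[\mu'_k]\}$ where $\mu'_k$ is projection followed by the iterated $H$-space product $\mu=\theta_{e_2}$ (note $Y$ is a homotopy associative, homotopy commutative $H$-space by the Corollary in \S2.1, so this target operad structure is available). One must check $\Theta_{\ms{C}}^{l,k}([S^l\to e_k])=[\mu'_k]$, i.e. that $\theta_{e_k}$ is homotopic to $\mu_k$; this follows from $\gamma(e_k;e_{\cdots})\sim e_m$ iterated down, together with $\theta_{e_2}=\mu$ and the homotopy-associativity argument of Prop.~2.2(3), so by induction $\theta_{e_k}\simeq\mu_k$. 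I expect the main obstacle to be purely bookkeeping: organizing the string of canonical shuffle/diagonal maps $S^l\times Y^m\to\cdots$ so that the two sides of the $\gamma$-compatibility square are visibly the same composite, and checking the diagonal $\Delta\colon S^l\to(S^l)^{k+1}$ threads through correctly — essentially the same diagram as in the Lemma in \S2.3 but with $X$ replaced by $S^l$ and no additivity subtlety. Once that diagram is drawn, everything else is formal.
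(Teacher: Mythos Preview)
Your proposal is correct and follows essentially the same route as the paper: both reduce $\gamma$-compatibility to the $\ms{C}$-action axiom $\theta(\gamma(c;c_1,\dots,c_k);y)=\theta(c;\theta(c_1;y^{(1)}),\dots,\theta(c_k;y^{(k)}))$ via the same diagonal-and-shuffle diagram, and both dispatch the unit and equivariance by direct inspection. Two minor remarks: the paper's proof actually omits the basepoint-preservation clause you carefully spell out, and your inductive argument that $\theta_{e_k}\simeq\mu_k$ is in fact unnecessary here since in \S4.1 the paper \emph{defines} $\mu_k=\theta_{e_k}$ (and then notes this agrees with the iterated product of \S2.3 up to homotopy), so $\Theta([S^l\to e_k])=[\mu'_k]$ is immediate.
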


$\Theta_{\ms{C}}^{*,*}$ may be thought of as a measure of the complexity of the $\ms{C}$-structure on $Y$, in particular as a measure of the complexity of the $n$-fold loop space structure on $\om^n X$ if $\ms{C}= \ms{C}_n$ for $n\geq 2$. In addition, product operations provide examples of another kind of actions of operads, i.e. the operad $[S^l, \ms{C}]$ acts on $Y$ via
$$\Theta_{\ms{C}}^{l,*}\to \{[S^l \times Y^k,Y]\}_{k\geq 0}.$$

\begin{proof}
  The assertion is obvious for $l=0$. Next let $l\geq 1$. For convenience, abbreviate $\Theta_{\ms{C}}^{l,k}$ to $\Theta$. First check that $\Theta$ commutes with $\gamma$. For $[f]\in [S^l, \ms{C}(k)]$ and $[g_i] \in [S^l, \ms{C}(m_i)]$,
  \begin{align*}
    \Theta (\gamma ([f];[g_1],\ldots,[g_k])) & = \theta_{\gamma ([f];[g_1],\ldots,[g_k])}, \\
    \gamma (\Theta[f]; \Theta[g_1], \ldots, \Theta[g_k]) & = \gamma (\theta_{[f]}; \theta_{[g_1]}, \ldots, \theta_{[g_k]}).
  \end{align*}
  To prove
  $$\Theta (\gamma ([f];[g_1],\ldots,[g_k]))= \gamma (\Theta[f]; \Theta[g_1], \ldots, \Theta[g_k]),$$
  we need only prove
  $$\gamma (\theta_{[f]}; \theta_{[g_1]}, \ldots, \theta_{[g_k]})= \theta_{\gamma ([f];[g_1],\ldots,[g_k])}.$$
  The latter identity follows from the following homotopy commutative diagram
  \begin{diagram}
    &&S^l \times Y^m\\
    &\ldTo<{\id \times (g_1 \times \id^{m_1}) \times \cdots}  &\dTo<{f \times (g_1 \times \id^{m_1}) \times \cdots}  &\rdTo>{\gamma(f; g_1, \ldots, g_k) \times \id^m}\\
    S^l \times (\ms{C}(m_1) \times Y^{m_1}) \times \cdots &&\ms{C}(k) \times (\ms{C}(m_1) \times Y^{m_1}) \times \cdots  &\rTo^{\gamma \times \id^m}  &\ms{C}(m) \times Y^m \\
    \dTo<{\id \times \theta^k}  &&\dTo<{\id \times \theta^k}  &&\dTo>{\theta}\\
    S^l\times Y^k  &\rTo^{f \times \id^k} &\ms{C}(k) \times Y^k  &\rTo^{\theta}  &Y,
  \end{diagram}
  where the left part and the right-lower square are strictly commutative and the right-upper triangle is homotopy commutative.

  It is evident that $\Theta$ preserves the identity element.

  Finally if $\ms{C}$ is symmetric, the equivariance of $\Theta$ follows from the following commutative diagram
  \begin{diagram}
    S^l \times Y^k  &\rTo^{f\times \id^k}  &\ms{C}(k) \times Y^k  &\rTo^{\sigma \times \id^k}  &\ms{C}(k) \times Y^k \\
    \dTo<{\id\times \sigma}  &\rdTo^{f\times \sigma}  &\dTo>{\id\times \sigma}  &&\dTo>{\theta} \\
    S^l\times Y^k  &\rTo^{f\times \id^k}  &\ms{C}(k) \times Y^k &\rTo^{\theta}  &Y.
  \end{diagram}
\end{proof}

\begin{prop}
  For $l\geq 1$, $\Theta_{\ms{C}}^{l,*}$ is natural with respect to $\ms{C}$-maps in the sense that the following diagram is homotopy commutative
  \begin{diagram}
    S^l \times Y_1^k  &\rTo^{\theta_a}  &Y_1 \\
    \dTo<{\id\times f^k}  &&\dTo>f \\
    S^l \times Y_2^k  &\rTo^{\theta_a}  &Y_2
  \end{diagram}
  where $a: S^l \to \ms{C}(k)$ and $f: Y_1\to Y_2$ is a $\ms{C}$-map. There is similar naturality for $\Theta^{0,*}_{\ms{C}}$.
\end{prop}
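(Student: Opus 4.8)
The plan is to reduce the statement to pasting together two squares, one of which commutes for formal reasons and the other of which is exactly the hypothesis that $f$ is a $\ms{C}$-map; this is the precise version of the remark in Section~3 that each $\theta_\alpha$ is natural with respect to $\ms{C}$-maps. First I would recall the two relevant definitions. For $a\colon S^l\to\ms{C}(k)$ the product operation is by construction the composite $\theta_a=\theta\circ(a\times\id^k)\colon S^l\times Y^k\to\ms{C}(k)\times Y^k\to Y$, and a $\ms{C}$-map $f\colon Y_1\to Y_2$ is one for which $f\circ\theta=\theta\circ(\id\times f^k)$ on $\ms{C}(k)\times Y_1^k$ (strictly, or up to homotopy, according to the notion of $\ms{C}$-map in force).

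Next I would factor the square in the statement, horizontally, as the composite of two squares: square (i), with horizontal edges $a\times\id^k\colon S^l\times Y_i^k\to\ms{C}(k)\times Y_i^k$ for $i=1,2$ and vertical edges $\id\times f^k$; and square (ii), with horizontal edges $\theta$ and vertical edges $\id\times f^k$ on the left and $f$ on the right. Square (i) commutes on the nose, because $a$ acts only on the $S^l$-coordinate while $f^k$ acts only on the $Y^k$-coordinates, so the two composites $S^l\times Y_1^k\to\ms{C}(k)\times Y_2^k$ literally agree. Square (ii) is precisely the defining diagram of a $\ms{C}$-map, so it commutes up to homotopy. Hence the outer rectangle is homotopy commutative, which is the assertion; read off at the level of homotopy classes this is the naturality of $\Theta^{l,k}_{\ms{C}}$. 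For $\Theta^{0,*}_{\ms{C}}$ I would run the same argument verbatim, with $a$ now a point of $\ms{C}(k)$ and the edge $a\times\id^k$ replaced by the inclusion $Y_i^k=\{a\}\times Y_i^k\into\ms{C}(k)\times Y_i^k$; the same two squares paste.

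I do not expect a genuine obstacle: the proof is a single diagram chase. The one point to state carefully is the strict-versus-homotopy distinction for $\ms{C}$-maps: if $f$ is a strict morphism of $\ms{C}$-algebras then square (ii), and hence the whole rectangle, commutes strictly at the space level, and otherwise only up to homotopy; either way the conclusion at the level of homotopy classes, and hence for the operads $\{[S^l\times Y^k,Y]\}_{k\geq 0}$ of the previous subsection, is the stated homotopy commutativity.
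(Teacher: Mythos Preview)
Your argument is correct and is essentially identical to the paper's own proof: the paper also factors the square through $\ms{C}(k)\times Y_i^k$ via $a\times\id^k$ and then appeals to the $\ms{C}$-map condition, noting that the diagram in fact commutes strictly. Your additional remark about strict versus homotopy $\ms{C}$-maps is a reasonable caveat, though in the paper's setting $\ms{C}$-maps are strict and the rectangle commutes on the nose.
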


\begin{proof}
  The above diagram is actually commutative due to the commutativity of the following one
  \begin{diagram}
    S^l \times Y_1^k  &\rTo^{a\times \id^k}  &\ms{C}(k) \times Y_1^k  &\rTo^{\theta}  &Y_1 \\
    \dTo<{\id\times f^k}  &&\dTo<{\id\times f^k}  &&\dTo>f \\
    S^l \times Y_2^k  &\rTo^{a\times \id^k}  &\ms{C}(k) \times Y_2^k  &\rTo^{\theta}  &Y_2.
  \end{diagram}
\end{proof}

\begin{prop}
  For $l\geq 0$, $\Theta_{\ms{C}}^{l,*}$ is a morphism of simplicial sets. For $n\geq 2$, $\Theta_n^{l,*}$ is moreover a morphism of DDA-sets.
\end{prop}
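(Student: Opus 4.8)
The plan is to check the two required compatibilities separately: that $\Theta_{\ms{C}}^{l,*}$ respects the simplicial operators $d_i$, $s_i$ (recovering the first assertion), and that, when $\ms{C} = \ms{C}_n$ with $n\geq 2$, it additionally respects the adding operators $d^i$. Since we have already proved (in the preceding proposition) that $\Theta_{\ms{C}}^{l,*}$ is a morphism of (nonsymmetric or symmetric) operads preserving basepoints, and since all the operators $d_i$, $s_i$, $d^i$ on both $[S^l,\ms{C}]$ and on $\{[S^l\times Y^k,Y]\}_{k\geq 0}$ are defined purely in terms of $\gamma$, the identity, the basepoint (the $e_k$'s on the source, the $\mu'_k$'s on the target), and — for $d^i$ — the $H$-space product $\mu$, most of the work is essentially bookkeeping. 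First I would recall the definitions: on the operad side $d_i a = \gamma(a;1^{i-1},*,1^{k-i})$ and $s_i a = \gamma(a;1^{i-1},e_2,1^{k-i})$; on the target side $d_i[f] = \gamma([f];{\mu'_1}^{i-1},*,{\mu'_1}^{k-i})$ and $s_i[f] = \gamma([f];{\mu'_1}^{i-1},\mu'_2,{\mu'_1}^{k-i})$.

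For $d_i$ and $s_i$: because $\Theta := \Theta_{\ms{C}}^{l,*}$ commutes with $\gamma$, we have $\Theta(d_i\alpha) = \gamma(\Theta\alpha;\Theta(1),\ldots,\Theta(*),\ldots,\Theta(1))$, so it suffices to identify the images of the distinguished elements of $\ms{C}$ under $\Theta$. The unit $1\in\ms{C}(1)$ maps to the identity element of $\{[S^l\times Y^k,Y]\}_{k\geq 0}$, namely $[\mu'_1] = [\mathrm{proj}\colon S^l\times Y\to Y]$, since $\Theta$ preserves identities. The nullary element $*\in\ms{C}(0)$ maps to the constant map (the basepoint in arity $0$), which is exactly the zero-ary piece of the strict basepoint $\{\mu'_k\}$. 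And $e_2\in\ms{C}(2)$: here one uses that $Y$ is an $H$-space via $\theta_{e_2}$ (this is why a good/strict basepoint is assumed — $d_1 e_2\sim d_2 e_2\sim 1$), and that the $H$-space product $\mu$ on $Y$ used to define the target structure is taken to be (homotopic to) $\theta_{e_2}$; hence $\Theta(e_2) = [\theta_{e_2}] = [\mu'_2]$ after composing with the projection $S^l\times Y^2\to Y^2$. Substituting these three identifications into the expansion of $\Theta(d_i\alpha)$ and $\Theta(s_i\alpha)$ gives precisely $d_i\Theta(\alpha)$ and $s_i\Theta(\alpha)$. This establishes the simplicial morphism claim for all $l\geq 0$ (the $l=0$ case is the same argument with $[S^0,-]$ replaced by $\pi_0$).

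For the DDA claim when $\ms{C} = \ms{C}_n$, $n\geq 2$, the extra operator $d^i$ on the source is the explicit little-cubes construction described in the text: it splits the unit interval in the first coordinate in half and inserts a fresh cube occupying the right half. The key point is that this $d^i$ on $\ms{C}_n$ corresponds, under the action $\theta$, to "multiply the $i$-th slot's output by a new free variable", which is exactly the formula $(d^i g)(x;y_1,\ldots,y_{k+1}) = g(x;y_1,\ldots,\hat y_i,\ldots,y_{k+1}) + y_i$ defining $d^i$ on the target. Concretely, the cube $(\tfrac12 + \tfrac12\,\id_I)\times\id_I^{n-1}$ placed in the right half, combined with the re-scaled old cubes in the left half, realizes the iterated little-cubes composite $\gamma(c_2; a, c_1)$-type configuration that $\theta$ sends to $\mu\circ(\theta_a\times\id_Y)$; chasing this through $\Theta$ yields $\Theta(d^i\alpha) = d^i\Theta(\alpha)$. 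I would phrase this as: $d^i$ on $\ms{C}_n$ is, up to the canonical homotopy, $\gamma(-;\ 1^{i-1}, \eta(c_2)(-,\cdot), 1^{k-i})$ composed with a shuffle, and $\Theta$ carries $\eta(c_2)$ to $\mu'_2$, so it carries $d^i$ on the source to $d^i$ on the target. The main obstacle is exactly this last step: unlike $d_i$ and $s_i$, which are literally built from $\gamma$ and distinguished elements, the operator $d^i$ on $\ms{C}_n$ is given by an ad hoc affine reparametrization, so one must verify — up to homotopy — that it agrees with a $\gamma$-expression in $\eta(c_2)$ and the unit; this requires producing an explicit homotopy of little-cubes configurations and checking it is compatible with the $S^l$-parameter (i.e. that it does not interfere with the map $f\colon S^l\to\ms{C}_n(k)$), which is where care is needed but no conceptual difficulty arises. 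All the relevant DDA identities ($d_id_j = d_jd_{i+1}$, the mixed relations, etc.) then transport automatically because they hold in $\ms{C}_n$ up to homotopy and $\Theta$ is a homotopy-invariant assignment.
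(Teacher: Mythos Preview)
Your argument for the simplicial operators $d_i$ and $s_i$ is correct and matches the paper: since $\Theta=\Theta_{\ms{C}}^{l,*}$ is already a basepoint-preserving morphism of operads, and both $d_i$ and $s_i$ are $\gamma$-compositions with the distinguished elements $1$, $*$, $e_2$ (carried by $\Theta$ to $\mu'_1$, $*$, $\mu'_2$), compatibility follows formally.

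For the DDA operator $d^i$, your intuition is right but the displayed expression $\gamma(-;1^{i-1},\eta(c_2),1^{k-i})$ is wrong: that is precisely the doubling operator $s_i$, not the adding operator $d^i$. The operator $d^i$ inserts a \emph{new} free cube rather than splitting an existing one, so it cannot be written as $\gamma(a;\ldots)$ with $a$ in the first slot; in such a composite every output cube lies inside some cube of $a$, whereas the new cube in $d^i a$ does not. No subsequent shuffle of outputs can repair this, because the underlying configurations are genuinely different.

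You did have the correct description for the special case $i=k+1$: writing the added right-half cube as the second input of $\eta(c_2)$ gives exactly $d^{k+1}a=\gamma(\eta(c_2);a,1)$, and then $\theta_{d^{k+1}f}(x;y_1,\ldots,y_{k+1})=\theta_f(x;y_1,\ldots,y_k)+y_{k+1}=(d^{k+1}\theta_f)(x;y_1,\ldots,y_{k+1})$. The paper's proof handles general $i$ not by finding a direct $\gamma$-formula but by reducing to this case via equivariance: on $\ms{C}_n$ one has $d^i=\sigma\cdot d^{k+1}$ (up to the canonical homotopy) for the cycle $\sigma$ that moves position $k+1$ to position $i$, and the analogous identity holds in $[S^l\times Y^{k+1},Y]$. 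Since $\Theta$ is $S_{k+1}$-equivariant, compatibility with $d^{k+1}$ plus equivariance yields compatibility with all $d^i$. This is both shorter and avoids the bookkeeping of producing, for each $i$, an explicit homotopy of little-cubes configurations.
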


\begin{proof}
  It remains to check that
  $$d^i \circ \Theta_n^{l,k}= \Theta_n^{l,k+1} \circ d^i$$
  which can be proved first for $d^{k+1}$, then for the rest by converting $d^i$ to $d^{k+1}$ using the equivariance of $\theta$.
\end{proof}

It should be noted that $\Theta_n^{l,k}$ is not a group homomorphism.

\section{Smash Operations on $\ms{C}$-Spaces}
Some product operations $Y^k\to Y$ (resp. $S^l \times Y^k\to Y$) may induce maps $Y^{\wedge k}\to Y$ (resp. $S^l \wedge Y^{\wedge k}\to Y$) which are natural with respect to $\ms{C}$-maps. Call these induced maps smash operations on $Y$. These smash operations are interesting because they are general analogues of the Samelson product and provide many operations on homotopy groups.

\subsection{Preparation}
Let $\ms{C}$ be a (nonsymmetric or symmetric) operad with a basepoint $\{e_k\}_{k\geq 0}$, $Y$ a $\ms{C}$-space and $Y_0$ the path-connected component of the basepoint of $Y$. Let
\begin{align*}
  \mu_k &= \theta_{e_k}: Y^k\to e_k\times Y^k \into \ms{C}(k) \times Y^k \xra{\theta} Y, \\
  \mu'_k &= S^l\times Y^k \xra{\mr{proj.}} Y^k \xra{\mu_k} Y.
\end{align*}
Note that $(k-1)$-fold iteration of $\mu_2$ is homotopic to $\mu_k$.

\begin{lem}
  For $l\geq 1$, $\theta_f \circ (\id_{S^l} \times d^i)= \theta_{d_i f}$,
\begin{diagram}
  S^l \times Y^{k-1}  &\rTo^{\id_{S^l} \times d^i}  &S^l \times Y^k \\
  &\rdTo_{\theta_{d_i f}}  &\dTo>{\theta_f} \\
  &&Y,
\end{diagram}
where $f:S^l\to \ms{C}(k)$ and $d^i: Y^{k-1}= Y^{i-1} \times * \times Y^{k-i} \into Y^k$. In particular $\theta_f \circ (\id_{S^l} \times d^i)= \theta_{d_if} \simeq \mu'_{k-1}$ if $d_if$ is nullhomotopic.
\end{lem}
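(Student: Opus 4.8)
The plan is to unwind the definitions so that the claimed factorization becomes a tautology at the space level, and then observe that the resulting diagram already commutes strictly. First I would recall that $d^i\colon Y^{k-1}\to Y^k$ is the inclusion of the $i$-th fat-wedge face, $d^i=\id_{Y^{i-1}}\times(*)\times\id_{Y^{k-i}}$, and that $d_i f\colon S^l\to\ms{C}(k-1)$ is by definition the composite $\gamma(f(-);1^{i-1},*,1^{k-i})$, where the $*\in\ms{C}(0)$ deletes the $i$-th slot. The key point is compatibility of $\theta$ with the operad structure map $\gamma$: for any $c\in\ms{C}(k)$ and $y_1,\dots,y_{k-1}\in Y$,
\[
\theta\bigl(c;\,y_1,\dots,y_{i-1},*,y_i,\dots,y_{k-1}\bigr)
=\theta\bigl(\gamma(c;1^{i-1},*,1^{k-i});\,y_1,\dots,y_{k-1}\bigr)
=\theta\bigl(d_i c;\,y_1,\dots,y_{k-1}\bigr),
\]
which uses the axiom $\theta\circ(\gamma\times\id)=\theta\circ(\id\times\theta^{\times})$ together with $\theta(*)=*$ on the deleted factor.

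Carrying this out, I would chase the diagram explicitly. Starting from $(s,y_1,\dots,y_{k-1})\in S^l\times Y^{k-1}$, the composite $\theta_f\circ(\id_{S^l}\times d^i)$ sends this to $\theta\bigl(f(s);\,y_1,\dots,y_{i-1},*,y_i,\dots,y_{k-1}\bigr)$, while $\theta_{d_if}$ sends it to $\theta\bigl(d_i(f(s));\,y_1,\dots,y_{k-1}\bigr)$. By the displayed identity applied pointwise with $c=f(s)$, these agree, so the triangle commutes on the nose (not merely up to homotopy). This gives $\theta_f\circ(\id_{S^l}\times d^i)=\theta_{d_if}$ as maps, hence their homotopy classes agree, proving the first assertion. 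For the ``in particular'' clause: the target map of the restricted composite is $\theta_{d_if}\colon S^l\times Y^{k-1}\to Y$; if $d_if\colon S^l\to\ms{C}(k-1)$ is nullhomotopic, then by the functoriality already recorded (the analogue of the proposition that $a\sim b$ implies $\theta_a\simeq\theta_b$, applied to a nullhomotopy of $d_if$ together with $\theta_{e_{k-1}}=\mu_{k-1}$), we get $\theta_{d_if}\simeq\theta_{e_{k-1}}\circ\mathrm{proj}=\mu'_{k-1}$.

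There is essentially no obstacle here; the only thing requiring care is bookkeeping the indices in $d^i$ versus $d_i$ under the two different label conventions (the operad convention starting at $1$, the simplicial convention) flagged in the Notations section, and making sure the deleted slot on the $Y$-side ($d^i$, inserting the basepoint) matches the deleted input of the operad element ($d_i$, composing with $*\in\ms{C}(0)$). I expect the write-up to consist of precisely the pointwise computation above, with a one-line remark that for a symmetric operad one may reduce to $i=k$ by equivariance, exactly as in the proof that $\Theta_n^{l,*}$ commutes with the DDA-structure maps.
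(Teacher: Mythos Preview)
Your proposal is correct and follows essentially the same argument as the paper: both use the compatibility of the $\ms{C}$-action $\theta$ with the operad structure map $\gamma$ to identify $\theta(c;y_1,\dots,y_{i-1},*,y_i,\dots,y_{k-1})$ with $\theta(d_ic;y_1,\dots,y_{k-1})$, and then the triangle commutes strictly. The paper packages this as a commutative-diagram chase while you write it out pointwise, but the content is identical; your remark about reducing to $i=k$ by equivariance is an optional aside the paper does not bother with.
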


\begin{proof}
  By the definition of a $\ms{C}$-action, the following diagram is commutative
  \begin{diagram}
    \ms{C}(k) \times Y^{i-1} \times * \times Y^{k-i}  &\rInto  &\ms{C}(k) \times Y^k \\
    \dTo<{d_i}  &&\dTo_{\theta} \\
    \ms{C}(k-1) \times Y^{k-1} &\rTo^{\theta}  &Y,
  \end{diagram}
  and $\theta(c;*^k)=*$ for any $c\in \ms{C}(k)$. Then the following diagram
  \begin{diagram}
    &&S^l \times Y^{i-1}\times *\times Y^{k-1}  &\rInto  &S^l\times Y^k \\
    &\ruEq  &\dTo  &&\dTo>{f\times \id} \\
    S^l\times Y^{k-1}  &\rTo^{f\times \id}  &\ms{C}(k) \times Y^{i-1}\times *\times Y^{k-1}  &\rInto  &\ms{C}(k) \times Y^k \\
    &\rdTo_{d_if \times \id}  &\dTo>{d_i}  &&\dTo>{\theta} \\
    &&\ms{C}(k-1) \times Y^{k-1}  &\rTo^{\theta}  &Y
  \end{diagram}
  is commutative, thus the identity holds.
\end{proof}

We are interested in those $\alpha\in \pi_0 \ms{C}$ or $\alpha\in [S^l, \ms{C}(k)]$ ($l\geq 1$) with $d_i\alpha$ trivial for all $i$. Let $\mc{Z}_1 \pi_0 \ms{C}=1$ and for $k>1$,
$$\mc{Z}_k \pi_0 \ms{C}= \bigcap_{i=1}^k \Ker (d_i: \pi_0\ms{C}(k) \to \pi_0\ms{C}(k-1)).$$
If $\ms{C}$ is discrete, also denote $\Brun_k \ms{C}= \mc{Z}_k \pi_0 \ms{C}$. For instance, $\Brun_k \ms{P}= \Brun_k$ is the usual Brunnian braid group, where $\ms{P}= \{P_k\}_{k\geq 0}$ is the sequence of pure braid groups. For $l\geq 1$, let $\mc{Z}_1 [S^l, \ms{C}]=1$ and for $k>1$,
$$\mc{Z}_k [S^l, \ms{C}]= \bigcap_{i=1}^k \Ker (d_i: [S^l, \ms{C}(k)] \to [S^l, \ms{C}(k-1)]).$$
For example, $\mc{Z}_2 [S^1, \ms{C}_2]= \mr{Brun}_2/ \mr{ca}(P_2)=P_2$, $\mc{Z}_k [S^1, \ms{C}_2]= \mr{Brun}_k/ \mr{ca}(P_k)$ and for $n\geq 3$, $\mc{Z}_2 [S^l, \ms{C}_n]= \pi_l \ms{C}_n(2)= \pi_l S^{n-1}$, $\mc{Z}_k [S^l, \ms{C}_n]= \mc{Z}_k \pi_l \ms{C}_n$ is a subgroup of \cite{May:1972:GILS}
$$\pi_l\ms{C}_n(k)\approx \bigoplus_{i=1}^{k-1} \pi_l (\bigvee^i S^{n-1})$$
which may be thought of as the higher dimensional analogue of Brunnian braids. (Note \cite{May:1972:GILS} $\ms{C}_n(k)\simeq F(\R^n,k)$, $\pi_l \ms{C}_n(k)= \pi_l F(\R^n,k)$, $\pi_1 F(\R^2,k)= P_k$.)

For the product $X_1\times \cdots \times X_k$ of pointed spaces, define its fat wedge as
$$\mr{FW}(X_1\times \cdots \times X_k)= \{(x_1,\ldots,x_k) \in X_1\times \cdots \times X_k \mid \textrm{at least one } x_i=*\}.$$
Note that if $\alpha=[f] \in \mc{Z}_k [S^l, \ms{C}]$, then $\theta_{f} \circ (\id_{S^l} \times d^i)= \theta_{d_i f}\simeq \mu'_{k-1}$ for all $i$. (For convenience, we shall sometimes also use $\theta_{\alpha}$ to denote $\theta_f$ by abuse of notation.) One may wonder if all these homotopies can be glued together to give a homotopy on the fat wedge. Namely, is $\theta_{\alpha}$ homotopic to $\mu'_k$ restricted to the fat wedge,
$$\theta_{\alpha} \simeq \mu'_k: \mr{FW}(S^l\times Y^k)= (*\times Y^k) \cup (S^l\times \bigcup_{i=1}^{k} Y^{i-1}\times *\times Y^{k-i}) \to Y?$$
Noting that $S^l\wedge Y^{\wedge k}= S^l \times Y^k/ \mr{FW} (S^l\times Y^k)$, if this is true, then $\mu'_k- \theta_{\alpha}$ factors through $S^l\wedge Y^{\wedge k}$,
$$\xymatrix{
  S^l \times Y^k \ar[d] \ar[r]^-{\mu'_k- \theta_{\alpha}}  & Y \\
  S^l\wedge Y^{\wedge k} \ar@{-->}[ur]_{\bar{\theta}_{\alpha}} }$$
where $\bar{\theta}_{\alpha}$ is the induced map, called a smash operation on $Y$. Hence to get smash operations, the key is to have that $\theta_{\alpha}$ is homotopic to $\mu'_k$ restricted to the fat wedge.

\begin{example}
  For the little 1-cubes operad $\ms{C}_1$, $\pi_0 \ms{C}_1 \cong \ms{S}= \{S_k\}_{k\geq 0}$ the symmetric groups operad, $\mc{Z}_2 \pi_0 \ms{C}_1= S_2= \{1,\tau\}$ and $\mc{Z}_k \pi_0 \ms{C}_1=1$ for $k\neq 2$ where $\tau=(12)$ is the transposition. Let $\mu: \om X\times \om X\to \om X$ be the loop product. Then
  $$\theta_{\tau}: \om X\times \om X\to \om X, \quad (f,g) \mapsto \mu (g,f)= g\cdot f.$$
  Evidently, $\theta_{\tau} \simeq \mu: \om X\vee \om X \to \om X$. Thus $\mu- \theta_{\tau}: \om X \times \om X\to \om X$ factors through $\om X\wedge \om X$,
  $$\xymatrix{
  \om X \times \om X \ar[d] \ar[r]^-{\mu- \theta_{\tau}}  & \om X \\
  \om X \wedge \om X \ar@{-->}[ur]_{\bar{\theta}_{\tau}} }$$
  Clearly $\mu- \theta_{\tau}$ is exactly the commutator product $[f,g]= fgf^{-1} g^{-1}$. Thus the induced map $\bar{\theta}_{\tau}$ is exactly the Samelson product $[-,-]: \om X\wedge \om X\to \om X$.
\end{example}

Concerning the existence of smash operations, we propose the following conjecture.

\begin{conj}\label{conj:smash_operation}
  Let $\ms{C}$ be an operad with a basepoint and $k\geq 2$. If $\alpha\in \mc{Z}_k [S^l, \ms{C}]$ ($l\geq 1$) such that $\alpha(*)\sim e_k$, then
  $$\theta_{\alpha} \simeq \mu'_k: \mr{FW}(S^l\times Y^k) \to Y.$$
\end{conj}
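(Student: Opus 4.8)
The plan is to build the required homotopy piece by piece, handling the ``vertex'' part and each of the $k$ ``wall'' parts separately, then verifying that the pieces agree on overlaps so that they assemble into a single map on $\mr{FW}(S^l \times Y^k)$. Concretely, $\mr{FW}(S^l \times Y^k)$ decomposes as the union of $* \times Y^k$ and the $k$ subspaces $W_i = S^l \times Y^{i-1} \times * \times Y^{k-i}$; on $* \times Y^k$ both $\theta_\alpha$ and $\mu'_k$ restrict to $\theta_{\alpha(*)}$ and $\mu_k$ respectively (after forgetting the $S^l$ coordinate), and since $\alpha(*) \sim e_k$ we get a homotopy there by Proposition (path in $\ms{C}(k)$ induces homotopy of $\theta_a$'s); on each $W_i$, Lemma (the one with $\theta_f \circ (\id_{S^l} \times d^i) = \theta_{d_i f}$) together with $d_i \alpha$ trivial gives $\theta_\alpha|_{W_i} = \theta_{d_i f} \simeq \mu'_{k-1} = \mu'_k|_{W_i}$.

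The first substantive step is to choose all these homotopies compatibly. I would fix once and for all a path $p \colon I \to \ms{C}(k)$ from $\alpha(*)$ to $e_k$ (possible by the hypothesis $\alpha(*) \sim e_k$ and path-connectedness arguments as in the Preliminaries), and on each wall $W_i$ fix a nullhomotopy $H_i \colon S^l \times I \to \ms{C}(k-1)$ of $d_i f$. The crucial bookkeeping is to relate $p$ and the $H_i$ on the codimension-two strata $W_i \cap (* \times Y^k)$ and $W_i \cap W_j$: on $W_i \cap (* \times Y^k)$ one needs the restriction of the $H_i$-homotopy at the basepoint of $S^l$ to coincide with $d_i$ applied to the $p$-homotopy, which requires choosing the nullhomotopies $H_i$ so that $H_i(*, -) = d_i \circ p$ (rather than an arbitrary nullhomotopy of $d_i f(*) = d_i \alpha(*) \sim d_i e_k \sim e_{k-1}$); and on $W_i \cap W_j$ one needs the two wall-homotopies to agree, which should follow from the simplicial-up-to-homotopy identities $d_i d_j = d_j d_{i+1}$ and a suitable coherence in the choice of the $H_i$.

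To organize this cleanly I would argue inductively or use an obstruction-theoretic packaging: filter $\mr{FW}(S^l \times Y^k)$ by the number of coordinates forced to be the basepoint, build the homotopy on the deepest stratum first (where it is essentially canonical), and extend outward one stratum at a time, at each stage using the homotopy extension property — exactly the HEP technique already used in the proof of Proposition (part 1, the $H$-space statement) — to modify the already-constructed homotopy on the lower stratum so that it extends over the next. The target $Y$ being a general $\ms{C}$-space, one works relative to the relevant cofibrations $\mr{FW}^{(j)} \into \mr{FW}^{(j+1)}$, which are cofibrations because they are built from products of the basepoint inclusions $* \into Y$ (assumed well-pointed) and $* \into S^l$.

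The main obstacle, as the paper itself signals by only proving the conjecture in the two special cases, is precisely this coherence: making the vertex-homotopy and the $k$ wall-homotopies mutually compatible on all overlaps simultaneously. For $k = 2$ there is only a single wall-pair overlap with the vertex and no $W_i \cap W_j$ interaction beyond that, so a direct construction works; for general $k$ with a $K(\pi,1)$ operad with free symmetric actions, the higher overlaps $W_i \cap W_j \cap \cdots$ carry potential obstructions living in homotopy groups of $\ms{C}(k-r)$ that need not vanish, and one instead reconstructs the operad from its fundamental group operad (as in \cite{Zhang:2011:GOHT}) to control them. So in full generality I do not expect the naive gluing to close up without extra input, and I would flag the $W_i \cap W_j$ compatibility — equivalently, the failure of the $d_i$'s to commute strictly rather than up to coherent homotopy — as the step where the argument genuinely needs the structural hypotheses on $\ms{C}$.
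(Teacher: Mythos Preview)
The statement you are addressing is labelled a \emph{conjecture} in the paper; there is no proof of it in full generality, only the two special cases (Theorem~\ref{thm:smash_operations-two} for $k=2$ and Theorem~\ref{thm:smash_operations-K(pi,1)} for $K(\pi,1)$ operads). Your proposal is therefore not a proof but a diagnosis of \emph{why} it remains a conjecture, and in that respect it agrees almost exactly with the paper's own analysis in the subsection ``General Cases'': the fat wedge decomposes into $*\times Y^k$ and the walls $W_i$; on each piece separately one has the required homotopy via the lemma $\theta_f\circ(\id_{S^l}\times d^i)=\theta_{d_if}$ together with $d_i\alpha$ trivial; and the obstruction to gluing is precisely the failure of the chosen nullhomotopies $F_i$ to satisfy $d_{j-1}F_i = d_iF_j$ on the nose rather than merely up to homotopy. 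The paper isolates exactly this compatibility as a separate lemma and then shows that for operads of the form $|E\ms{G}/\ms{H}|$ one can choose representatives with $d_if = \epsilon_{e_{k-1}}$ \emph{strictly}, killing the obstruction outright; the general $K(\pi,1)$ case follows by the reconstruction theorem from \cite{Zhang:2011:GOHT} and a transport-along-equivalences argument.

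Two minor remarks on your write-up. First, your claim that for $k=2$ there is ``no $W_i\cap W_j$ interaction'' is not literally true: $W_1\cap W_2 = S^l\times *\times *$ is present, but both wall-homotopies restrict there to the constant map to the basepoint of $Y$ (since $\theta(c;*,*)=*$), so the overlap condition is satisfied for free --- this is the actual reason $k=2$ is tractable. Second, your filtration-plus-HEP packaging is slightly more systematic than what the paper does for $k=2$: there the homotopy $H$ on $S^l\times(Y\vee Y)$ is built directly from $F_1',F_2'$ and then corrected to $H'$ by an explicit reparametrization trick so as to agree with $\theta'_e$ on $*\times(Y\vee Y)$. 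The content is the same, and you correctly flag the higher overlaps $W_i\cap W_j$ (and beyond) as the place where the argument needs the extra hypotheses on $\ms{C}$.
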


From the above discussion, if this conjecture is true for an operad $\ms{C}$, then each $\alpha\in \mc{Z}_k [S^l, \ms{C}]$ gives a smash operation $\bar{\theta}_{\alpha}: S^l \wedge Y^{\wedge k} \to Y$.

\subsection{The Case of Two Factors}
In this subsection we prove that Conjecture \ref{conj:smash_operation} is true in the case of two factors for general operads. For $a\in \ms{C}(k)$, let $\theta'_a: S^l\times Y^k \xra{\mr{proj.}} Y^k \xra{\theta_a} Y$, and $\epsilon_1: S^l\to 1\into \ms{C}(1)$, $\epsilon_a: S^l\to a\into \ms{C}(k)$ denote the constant maps.

\begin{thm}\label{thm:smash_operations-two}
  For an operad $\ms{C}$, if $e\in \ms{C}(2)$ with $d_1e\sim d_2e\sim 1\in \ms{C}(1)$ and $\alpha \in \mc{Z}_2 [S^l, \ms{C}]$ ($l\geq 1$) with $\alpha(*)\sim e$, then
  $$\theta_{\alpha} \simeq \theta'_e: \mr{FW}(S^l\times Y^2)= (* \times Y^2) \cup (S^l \times * \times Y) \cup (S^l \times Y \times *) \to Y.$$
\end{thm}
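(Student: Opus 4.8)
The plan is to construct an explicit homotopy on the fat wedge $\mr{FW}(S^l\times Y^2)$ and then observe that it is compatible across the three pieces. Recall that $\mr{FW}(S^l\times Y^2)$ decomposes as $(*\times Y^2)\cup (S^l\times *\times Y)\cup (S^l\times Y\times *)$, glued along $*\times *\times Y$ and $*\times Y\times *$. On $*\times Y^2$ both $\theta_\alpha$ and $\theta'_e$ restrict to $\theta_{\alpha(*)}$ and $\theta_e$ respectively; since $\alpha(*)\sim e\in\ms{C}(2)$, Proposition (the first one in \S2.1, ``if $a\sim b\in\ms{C}(k)$ then $\theta_a\simeq\theta_b$'') applied to a path from $\alpha(*)$ to $e$ gives a homotopy $G\colon (*\times Y^2)\times I\to Y$ from $\theta_\alpha|_{*\times Y^2}$ to $\theta_e$. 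The key point is that a path $p\colon I\to\ms{C}(2)$ from $\alpha(*)$ to $e$ induces, via $d_1$ and $d_2$, paths $d_1p$ and $d_2p$ in $\ms{C}(1)$ which, composed with the chosen paths $d_1e\sim 1$ and $d_2e\sim 1$, witness $d_1\alpha(*)\sim 1$ and $d_2\alpha(*)\sim 1$; one should choose these consistently so the homotopies on the overlaps agree.

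First I would handle the arm $S^l\times Y\times *$. By Lemma (the one just before this subsection, $\theta_f\circ(\id_{S^l}\times d^i)=\theta_{d_if}$), on $S^l\times Y\times *$ we have $\theta_\alpha=\theta_{d_2\alpha}$, and $\theta'_e$ restricted here is $\theta'_{d_2e}$ in the sense of the notation introduced (a projection followed by $\theta_{d_2e}\colon Y\to Y$). Since $\alpha\in\mc{Z}_2[S^l,\ms{C}]$ means $d_2\alpha$ is trivial (i.e. nullhomotopic as a map $S^l\to\ms{C}(1)$), and $d_2e\sim 1$, both $\theta_{d_2\alpha}$ and $\theta_{d_2e}\circ\mr{proj}$ are homotopic through basepoint-type homotopies to the projection $S^l\times Y\to Y\xra{\id}Y$: indeed $\theta_{d_2\alpha}\simeq\theta_{\epsilon_1}=\mu'_1=\mr{proj}$ because a nullhomotopy of $d_2\alpha$ gives a homotopy of the composite $S^l\times Y\to\ms{C}(1)\times Y\to Y$ down to the constant-$1$ map, and $\theta_{d_2e}\simeq\theta_1=\id$ on $Y$. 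Concatenating, I get a homotopy $H^{(2)}$ on $S^l\times Y\times *$ from $\theta_\alpha$ to $\theta'_e$. Symmetrically, using $d_1$, I get $H^{(1)}$ on $S^l\times *\times Y$.

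The remaining work is to glue $G$, $H^{(1)}$, $H^{(2)}$ into a single homotopy on the fat wedge. On the overlap $*\times *\times Y$ (shared by $*\times Y^2$ and $S^l\times *\times Y$), $G$ restricts to the homotopy induced by the path $d_1p$ from $d_1\alpha(*)$ to $d_1e$ in $\ms{C}(1)$, acting via $\theta$ on $Y$; by choosing the nullhomotopy of $d_1\alpha$ used to build $H^{(1)}$ to restrict over $*\in S^l$ to exactly the reverse of $d_1p$ followed by the path $d_1e\sim 1$, one arranges that $G$ and $H^{(1)}$ agree on this overlap. The analogous choice makes $G$ and $H^{(2)}$ agree on $*\times Y\times *$, and $H^{(1)}$, $H^{(2)}$ trivially agree on $S^l\times *\times *$ (both are projection to the point, hence the constant homotopy at the map sending everything to $*$, since $\theta_\alpha$ and $\theta'_e$ both send $\mr{FW}$'s ``triple basepoint'' locus to $*$). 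Then the pasting lemma for the closed cover of $\mr{FW}(S^l\times Y^2)\times I$ by the three pieces-times-$I$ assembles these into the desired homotopy $\theta_\alpha\simeq\theta'_e$ on the fat wedge.

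I expect the main obstacle to be precisely this gluing bookkeeping: the three homotopies $G$, $H^{(1)}$, $H^{(2)}$ are each only defined up to a choice (of path in $\ms{C}(2)$, of nullhomotopies of $d_i\alpha$, of paths $d_ie\sim 1$), and one must make these choices in a coordinated way so the restrictions to the double overlaps literally coincide rather than merely being homotopic. Once that coordination is set up, each individual homotopy is routine from the cited propositions and the definition of a $\ms{C}$-action. A cleaner alternative, which I would mention as a remark, is to phrase everything in terms of the path operad: a path $p\colon I\to\ms{C}(2)$ with $p(1)=e$, $p(0)=\alpha(*)$ together with chosen paths in $\ms{C}(1)$ contracting $d_1e$, $d_2e$ assemble (using that $\mr{FW}$ is a pushout and $I$ is an interval) into a single map $\mr{FW}(S^l\times Y^2)\times I\to Y$ without case analysis, but the explicit version above is more transparent.
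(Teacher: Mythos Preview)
Your outline is correct and close in spirit to the paper's argument, but the crucial step is exactly the one you wave through: the assertion that the nullhomotopy $N_1$ of $d_1\alpha$ can be \emph{chosen} so that its track $N_1(*,-)$ over the basepoint is a prescribed path. This is true, but it is not automatic and is in fact where the real content of the proof lives. Given an arbitrary nullhomotopy $N_1^0$, its basepoint track $N_1^0(*,-)$ is some path $\gamma_0$, and to replace it by your desired path $\gamma_1$ you must (i) concatenate $N_1^0$ with the constant-in-$S^l$ homotopies along $\gamma_0^{-1}$ and then $\gamma_1$, obtaining a new nullhomotopy whose basepoint track is $\gamma_0\cdot\gamma_0^{-1}\cdot\gamma_1$, and then (ii) invoke the homotopy extension property for the cofibration $(\{*\}\times I)\cup (S^l\times\partial I)\hookrightarrow S^l\times I$ to straighten this to $\gamma_1$ on the nose. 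Without this argument your gluing claim is unjustified: two paths with the same endpoints in $\ms{C}(1)$ need not be homotopic rel endpoints if $\pi_1\ms{C}(1)\neq 0$, so one cannot simply ``choose'' a nullhomotopy with arbitrary prescribed basepoint track without producing it.

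The paper takes a slightly different route that sidesteps the coordination problem. It first replaces $f$ by a homotopic map with $f(*)=e$, so that $\theta_f=\theta'_e$ \emph{exactly} on $*\times Y^2$ and your homotopy $G$ becomes the identity homotopy. It then builds $H$ on $S^l\times(Y\vee Y)$ from \emph{arbitrary} nullhomotopies $F_1,F_2$ (your $N_1,N_2$), glues them on $S^l\times *\times *$ as you do, and only afterwards repairs the failure of $H$ to be constant on $*\times(Y\vee Y)$: it concatenates $H$ with the reversed $H(-;*;-)$ (constant in $S^l$), observes that the resulting basepoint track is a canonically nullhomotopic loop, and uses HEP to push this nullhomotopy through. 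This is exactly the maneuver (i)--(ii) above, but performed once at the end in $Y$ rather than twice upfront in $\ms{C}(1)$. Your approach would work once this step is filled in; the paper's has the mild advantage that the initial reduction $f(*)=e$ eliminates one of the three pieces and makes the remaining incompatibility easier to name.
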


This result should also be valid for homotopy commutative $H$-spaces.

\begin{proof}
  Suppose $\alpha=[f]\in \mc{Z}_2 [S^l, \ms{C}]$, namely $f: S^l\to \ms{C}(2)$ with $d_1f\simeq \epsilon_1$ and $d_2f\simeq \epsilon_1$. We may assume $f(*)=e$ (if not, choose $g\simeq f$ with $g(*)=e$, then $\theta_f\simeq \theta_{g}$). Then $\theta_f= \theta'_e$ restricted to $*\times Y^2\subset S^l\times Y^2$.

  First construct a homotopy from $\theta_f$ to $\theta'_e$ restricted to $S^l\times (Y\vee Y)$. Note
  \begin{align*}
    \theta_f (a;*,y)= \theta (f(a);*,y)= \theta (d_1f(a);y), \quad \theta'_e (a;*,y)= \theta (e;*,y)= \theta (d_1e;y), \\
    \theta_f (a;y,*)= \theta (f(a);y,*)= \theta (d_2f(a);y), \quad \theta'_e (a;y,*)= \theta (e;y,*)= \theta (d_2e;y).
  \end{align*}
  Since $d_1f\simeq \epsilon_1\simeq \epsilon_{d_1e}$ and $d_2f\simeq \epsilon_1\simeq \epsilon_{d_2e}$, there are $F_1,F_2: I\times S^l\to \ms{C}(1)$ such that $F_1(0;a)= d_1f(a)$, $F_1(1;a)=d_1e$, $F_2(0;a)= d_2f(a)$, $F_2(1;a)=d_2e$. Define
  \begin{align*}
    F'_1: I\times S^l\times *\times Y \to \ms{C}(1)\times Y\to Y, \quad F'_1(t;a; *,y)= \theta (F_1(t;a);y), \\
    F'_2: I\times S^l\times Y\times * \to \ms{C}(1)\times Y\to Y, \quad F'_2(t;a; y,*)= \theta (F_2(t;a);y).
  \end{align*}
  Then $F'_1(0;a;y,*)= \theta (d_1f(a);y)= \theta_f (a;*,y)$, $F'_1(1;a;*,y)= \theta (d_1e;y)= \theta'_e (a;*,y)$, $F'_1(t;a;*,*)= \theta (F_1(t;a); *)=*$; $F'_2(0;a;*,y)= \theta (d_2f(a);y)= \theta_f (a;y,*)$, $F'_2(1,a;y,*)= \theta (d_2e;y)= \theta'_e (a;y,*)$, $F'_2(t;a;*,*)= \theta (F(t;a); *)=*$. $F'_1,F'_2$ coincide on
  $$(I\times S^l\times *\times Y) \cap (I\times S^l\times Y\times *)= I\times S^l\times *\times *,$$
  thus they can be glued together to give a homotopy
  $$H: I\times S^l\times (*\times Y\cup Y\times *) \to Y$$
  such that $H(0;a;y_1,y_2)= \theta_f(a;y_1,y_2)$, $H(1;a;y_1,y_2)= \theta'_e (a;y_1,y_2)$.

  Note that $H(t;*;y_1,y_2)$ may not be $\theta'_e (*;y_1,y_2)$ since we may not have $F(t;*)=d_1e$, $G(t;*)=d_2e$ (the loops $H(-;*;y_1,y_2)$ even may not be nullhomotopic). Next modify $H$ to another homotopy $H'$ from $\theta_f$ to $\theta'_e$ with $H'(t;*;y_1,y_2)= \theta'_e (*;y_1,y_2)$ for $t\in I$ and $(*;y_1,y_2)\in (*\times Y^2) \cap (S^l\times (Y\vee Y))= *\times (Y\vee Y)$, then a homotopy from $\theta_f$ to $\theta'_e$ restricted to the fat wedge is obtained. $H'$ can be obtained in the following way. Let
  $$H_1: I\times S^l\times (Y\vee Y) \to Y, \quad (t;a;y_1,y_2) \mapsto H(1-t;*;y_1,y_2).$$
  Gluing $H$ and $H_1$ together, let
  $$H_2: I\times S^l \times (Y\vee Y) \to Y, \quad
  H_2(t;a;y_1,y_2)= \left\{%
  \begin{array}{ll}
    H(2t;a;y_1,y_2) & 0\leq t\leq \frac{1}{2}, \\
    H(2-2t;*;y_1,y_2) & \frac{1}{2}\leq t\leq 1. \\
  \end{array}\right.$$
  (Now the loops $H_2(-;*;y_1,y_2)$ are uniformly nullhomotopic.) Define a map $I\times I\times * \times (Y\vee Y)\to Y$
  (which is a uniform nullhomotopy of the loops $H_2(-;*;y_1,y_2)$),
  $$(s,t;*;y_1,y_2)\mapsto \left\{%
  \begin{array}{ll}
    H(2t;*;y_1,y_2) & 0\leq t\leq \frac{1-s}{2}, \\
    H(1-s;*;y_1,y_2) & \frac{1-s}{2}\leq t\leq \frac{1+s}{2}, \\
    H(2-2t;*;y_1,y_2) & \frac{1+s}{2}\leq t\leq 1, \\
  \end{array}\right.$$
  particularly $(s,0;*;y_1,y_2), (s,1;*;y_1,y_2), (1,t;*;y_1,y_2) \mapsto H(0;*;y_1,y_2)= \theta'_e (*;y_1,y_2)$. This gives a map
  $$0\times (I\times S^l\times (Y\vee Y)) \cup I\times ((\partial I\times S^l\times (Y\vee Y)) \cup (I\times *\times (Y\vee Y))) \to Y$$
  which can be extended to a map $H_3: I\times I\times S^l\times (Y\vee Y)\to Y$. Let $H'(t;a;y_1,y_2)= H_3(1,t;a;y_1,y_2)$.
\end{proof}

\begin{cor}
  For $\alpha\in \mc{Z}_2 [S^l, \ms{C}]$, $\mu'_2- \theta_{\alpha}: S^l \times Y^2 \to Y$ factors through $S^l \wedge Y \wedge Y$,
  \begin{diagram}
    S^l \times Y \times Y  &\rTo^{\mu'_2- \theta_{\alpha}}  &Y_0 \\
    \dTo  &\ruDashto_{\bar{\theta}_{\alpha}} \\
    S^l \wedge Y \wedge Y.
  \end{diagram}
\end{cor}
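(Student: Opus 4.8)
The plan is to read this off Theorem~\ref{thm:smash_operations-two} by taking the auxiliary element $e$ there to be the operad's own basepoint $e=e_2\in\ms{C}(2)$. For that choice the two sides of the theorem match exactly what the corollary needs: $\theta'_{e_2}=\mu'_2$ by definition (since $\mu_2=\theta_{e_2}$), and $e_2$ satisfies the hypothesis $d_1e_2\sim d_2e_2\sim1$ because $\ms{C}$ carries a basepoint (so $d_ie_2\sim e_1=1$). Under the standing assumption $\alpha(*)\sim e_2$ (as in Conjecture~\ref{conj:smash_operation}), the last hypothesis $\alpha(*)\sim e$ of the theorem is met, so Theorem~\ref{thm:smash_operations-two} gives $\theta_\alpha\simeq\mu'_2$ on $\mr{FW}(S^l\times Y^2)$. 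Since $Y$ is a homotopy-associative $H$-space via $\theta_{e_2}$, it follows that $\mu'_2-\theta_\alpha$ restricts to a nullhomotopic map on $\mr{FW}(S^l\times Y^2)$: a map homotopic to $g$ has difference $g-g\simeq*$ with $g$.

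It then remains to turn ``nullhomotopic on the fat wedge'' into ``factors through the quotient.'' Here I would invoke that the inclusion $\mr{FW}(S^l\times Y^2)\into S^l\times Y^2$ is a cofibration — a fat wedge of well-pointed spaces sits in the product as an NDR pair — so the homotopy extension property applies: extend a nullhomotopy of $(\mu'_2-\theta_\alpha)|_{\mr{FW}(S^l\times Y^2)}$ to a homotopy of $\mu'_2-\theta_\alpha$ on all of $S^l\times Y^2$, and its endpoint is a map sending the fat wedge to $*$, hence factoring through $S^l\wedge Y\wedge Y=(S^l\times Y^2)/\mr{FW}(S^l\times Y^2)$; call the induced map $\bar\theta_\alpha$. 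That $\bar\theta_\alpha$ takes values in $Y_0$ follows because $\mu'_2-\theta_\alpha$ already sends every all-basepoint tuple $(a;*,*)$ to $*$ — the $\ms{C}$-action kills such tuples, $\theta(c;*,*)=*$ — while every other value is joined by the homotopies above to a value of $\mu'_2-\mu'_2\simeq*$, so all of $S^l\times Y^2$ maps into the component of $*$.

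There is no serious obstacle here: the corollary is essentially a repackaging of Theorem~\ref{thm:smash_operations-two}. The two points that warrant a word of care are (i) that $e_2$ is a legitimate choice of auxiliary element, which is precisely where the hypothesis $\alpha(*)\sim e_2$ enters — automatic whenever $\ms{C}(2)$ is path-connected, as for $\ms{C}=\ms{C}_n$ with $n\geq2$ — and (ii) the passage to the smash quotient, the routine cofibration/HEP argument for NDR pairs. If one prefers not to assume $\alpha(*)\sim e_2$, one can instead run the theorem with $e=\alpha(*)$ (legal since the faces of $\alpha$ are trivial, which forces $d_1e\sim d_2e\sim1$ on evaluating at the basepoint of $S^l$) and then compare $\theta'_{\alpha(*)}$ with $\mu'_2$ on the fat wedge by hand, but the comparison on the slice $*\times Y^2$ again requires $\alpha(*)\sim e_2$, so nothing is gained.
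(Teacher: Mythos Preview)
Your proposal is correct and matches the paper's approach: the paper states this corollary without proof, treating it as immediate from Theorem~\ref{thm:smash_operations-two} together with the general discussion in the Preparation subsection (where it is explained that once $\theta_\alpha\simeq\mu'_k$ on the fat wedge, $\mu'_k-\theta_\alpha$ factors through $S^l\wedge Y^{\wedge k}$). Your choice $e=e_2$, the identification $\theta'_{e_2}=\mu'_2$, and the cofibration/HEP passage to the smash quotient are exactly the intended steps, and you are right to flag the standing hypothesis $\alpha(*)\sim e_2$ inherited from Conjecture~\ref{conj:smash_operation}.
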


Suppose $Y$ is furthermore a group-like space (this holds if $Y$ is a connected CW-complex \cite{Whitehead:1978:EHT}). Then the identity has an inverse, denoted by $-\id: Y\to Y$. Let
$$\theta_{\alpha,-}= \theta_{\alpha} \circ (\id_{S^l} \times (-\id)^k).$$

\begin{cor}
  For $\alpha\in \mc{Z}_2 [S^l, \ms{C}]$, $\mu'_2+ \theta_{\alpha,-}$ and $\theta_{\alpha}+ \theta_{\alpha,-}$ both factors through $S^l \wedge Y \wedge Y$,
  $$
  \begin{diagram}
    S^l \times Y \times Y  &\rTo^{\mu'_2+ \theta_{\alpha,-}}  &Y_0 \\
    \dTo  &\ruDashto_{\bar{\theta}_{\alpha,-}} \\
    S^l \wedge Y \wedge Y,
  \end{diagram} \qquad
  \begin{diagram}
    S^l \times Y \times Y  &\rTo^{\theta_{\alpha}+ \theta_{\alpha,-}}  &Y_0 \\
    \dTo  &\ruDashto_{\bar{\theta}_{\alpha} + \bar{\theta}_{\alpha,-}} \\
    S^l \wedge Y \wedge Y.
  \end{diagram}$$
\end{cor}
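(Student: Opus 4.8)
The plan is to deduce this corollary from the previous one (the factorization of $\mu'_2 - \theta_\alpha$ through $S^l\wedge Y\wedge Y$) by a formal manipulation using the group-like structure, rather than redoing the fat-wedge homotopy analysis. The key observation is that $\theta_{\alpha,-} = \theta_\alpha \circ (\id_{S^l}\times(-\id)^2)$ is itself a product operation of the same shape: precomposing with the self-map $(-\id)^2$ of $Y^2$ sends the fat wedge $\mr{FW}(S^l\times Y^2)$ into itself (since $-\id$ fixes the basepoint up to the usual homotopy, and $Y$ being group-like it may be taken to fix it strictly, or one works up to pointed homotopy), so Theorem \ref{thm:smash_operations-two} applied to $\theta_\alpha$ together with the homotopy $\mu_2\circ(-\id)^2 \simeq \mu_2 \circ(-\id)\circ\mu_2$-type identities gives that $\theta_{\alpha,-}\simeq \mu'_2\circ(\id\times(-\id)^2)$ on the fat wedge. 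Since on the fat wedge $\mu'_2\circ(\id\times(-\id)^2)$ is, componentwise, $(*;y_1,y_2)\mapsto *$, $(a;*,y)\mapsto -y$, $(a;y,*)\mapsto -y$, one gets that $\mu'_2 + \theta_{\alpha,-}$ restricted to $\mr{FW}(S^l\times Y^2)$ is homotopic to the map sending $(a;*,y)\mapsto y + (-y)$ and $(a;y,*)\mapsto y+(-y)$, which is null-homotopic because $Y$ is group-like; hence $\mu'_2+\theta_{\alpha,-}$ kills the fat wedge up to homotopy and factors through the quotient $S^l\wedge Y\wedge Y$, yielding $\bar\theta_{\alpha,-}$.

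First I would make precise the sum conventions: $\mu'_2 + \theta_{\alpha,-}$ means the pointwise product $\mu\circ(\mu'_2, \theta_{\alpha,-})\colon S^l\times Y^2\to Y$ using the $H$-space structure $\mu = \mu_2$, and homotopies of maps into $Y$ respect this addition up to the homotopy associativity/commutativity. Then I would record the restriction of each summand to each of the three cells of $\mr{FW}(S^l\times Y^2)$ using Lemma on $\theta_f\circ(\id\times d^i) = \theta_{d_if}$ together with $d_1\alpha, d_2\alpha$ trivial: on $S^l\times *\times Y$, $\theta_{\alpha,-}(a;*,y) = \theta_\alpha(a;*,-y) \simeq \mu'_1(-y) = -y$ (using $d_1\alpha$ trivial), and similarly on $S^l\times Y\times *$; on $*\times Y^2$ one uses $\alpha(*)\sim e_2$ and $d_1e_2\sim d_2e_2\sim 1$ so the contribution is again $\pm y$ on the wedge and $*$ at the cone point. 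Adding $\mu'_2$ (which on $S^l\times*\times Y$ is $(a;*,y)\mapsto y$ and on $S^l\times Y\times *$ is $(a;y,*)\mapsto y$) the sum becomes $y + (-y)$ on both wedge summands and $*$ on $*\times Y^2$, hence null-homotopic on the whole fat wedge by the group-like hypothesis (the standard null-homotopy $y + (-y)\simeq *$ extends over the fat wedge compatibly). Therefore $\mu'_2 + \theta_{\alpha,-}$ factors through $S^l\times Y^2/\mr{FW}(S^l\times Y^2) = S^l\wedge Y\wedge Y$, giving $\bar\theta_{\alpha,-}$; and since the image of a map null on the fat wedge of a connected space lands in the identity component, the target may be taken to be $Y_0$.

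For the second factorization, $\theta_\alpha + \theta_{\alpha,-}$, I would write $\theta_\alpha + \theta_{\alpha,-} = (\theta_\alpha - \mu'_2) + (\mu'_2 + \theta_{\alpha,-})$ up to the homotopies coming from homotopy associativity and homotopy commutativity of $\mu_2$; both summands on the right have already been shown to factor through $S^l\wedge Y\wedge Y$ — the first by the preceding corollary (factoring $\mu'_2 - \theta_\alpha$, hence its negative $\theta_\alpha - \mu'_2$), the second by the paragraph above — and the sum of two maps factoring through a quotient again factors through that quotient, with induced map the sum of the two induced maps, namely $\bar\theta_\alpha + \bar\theta_{\alpha,-}$ (here $\bar\theta_\alpha$ denotes the induced map from the previous corollary, up to the sign $-(\mu'_2-\theta_\alpha)\rightsquigarrow\theta_\alpha$, absorbing the inverse). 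This matches the labels in the two diagrams.

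The main obstacle I expect is bookkeeping the homotopies rather than any conceptual difficulty: one must check that $-\id$ can be arranged to be a \emph{pointed} self-map of $Y$ (or carry all homotopies in the unpointed-but-basepoint-tracked category as in the proof of Theorem \ref{thm:smash_operations-two}), that the rearrangement $\theta_\alpha + \theta_{\alpha,-} \simeq (\theta_\alpha - \mu'_2) + (\mu'_2 + \theta_{\alpha,-})$ is valid given only homotopy associativity and homotopy commutativity of $\mu_2$, and — most delicately — that the null-homotopy $y + (-y)\simeq *$ can be chosen uniformly over $S^l$ and compatibly on the two wedge summands where they meet along $S^l\times*\times*$, so that it glues to a genuine null-homotopy of $\mu'_2 + \theta_{\alpha,-}$ on the entire fat wedge. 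This last gluing is exactly the kind of step carried out explicitly in the proof of Theorem \ref{thm:smash_operations-two}, so one either invokes that machinery or observes that the relevant maps are already supported on the wedge $S^l\times(Y\vee Y)$ where a fibrewise inverse homotopy exists by group-likeness and the homotopy extension property handles the rest.
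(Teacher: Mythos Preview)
Your approach is correct in outline but takes a detour that the paper avoids. The paper gives no explicit proof, but the sentence immediately following the corollary, ``Clearly $\bar{\theta}_{\alpha,-}= \bar{\theta}_{\alpha} \circ (\id_{S^l} \wedge (-\id)^{\wedge 2})$'', reveals the intended argument: since $-\id$ may be taken pointed on a group-like $Y$, the self-map $\id_{S^l}\times(-\id)^2$ of $S^l\times Y^2$ preserves the fat wedge and therefore descends to $\id_{S^l}\wedge(-\id)^{\wedge 2}$ on $S^l\wedge Y\wedge Y$. Precomposing the already-established factorization $\mu'_2-\theta_\alpha \rightsquigarrow \bar\theta_\alpha$ from the previous corollary with this self-map immediately gives a factorization of $(\mu'_2-\theta_\alpha)\circ(\id\times(-\id)^2)\simeq -\mu'_2-\theta_{\alpha,-}$ through the smash product, hence of $\mu'_2+\theta_{\alpha,-}$; no new fat-wedge homotopy needs to be built or glued. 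Your second factorization (writing $\theta_\alpha+\theta_{\alpha,-}$ as a sum of two already-factored maps) matches this.

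By contrast, you redo the restriction-to-fat-wedge analysis piece by piece and then worry about gluing the null-homotopies $y+(-y)\simeq *$. This works, but your treatment of the $*\times Y^2$ piece is garbled: that piece is a full copy of $Y^2$, and there the sum is $(y_1+y_2)+(-y_1-y_2)$, i.e.\ $\mu_2+(-\mu_2)$ as a map $Y^2\to Y$, not ``$\pm y$ on the wedge''. It is still null-homotopic (being $\id+(-\id)$ postcomposed with $\mu_2$), so the argument can be repaired, but the precomposition route sidesteps all of this and explains the clean formula $\bar\theta_{\alpha,-}=\bar\theta_\alpha\circ(\id\wedge(-\id)^{\wedge 2})$ directly.
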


Clearly $\bar{\theta}_{\alpha,-}= \bar{\theta}_{\alpha} \circ (\id_{S^l} \wedge (-\id)^{\wedge 2})$.

\begin{prop}
  For primitive $y_1, y_2\in H_*Y$,
  \begin{align*}
    (\bar{\theta}_{\alpha})_* (\iota \otimes y_1 \otimes y_2) & = \theta_* (\bar{\alpha} \otimes y_1 \otimes y_2), \\
    (\bar{\theta}_{\alpha,-})_* (\iota \otimes y_1 \otimes y_2) & = \theta_* (\bar{\alpha} \otimes y_1 \otimes y_2), \\
    (\bar{\theta}_{\alpha}+ \bar{\theta}_{\alpha,-})_* (\iota \otimes y_1 \otimes y_2) & = 2\theta_* (\bar{\alpha} \otimes y_1 \otimes y_2).
  \end{align*}
\end{prop}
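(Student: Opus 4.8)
The plan is to reduce all three identities to a single homology computation with $\bar{\theta}_{\alpha}$ and then derive the other two formally. Recall that $\bar{\theta}_{\alpha}$ is characterized by $\bar{\theta}_{\alpha}\circ q\simeq \mu'_2-\theta_{\alpha}$, where $q\colon S^l\times Y^2\to S^l\wedge Y\wedge Y$ collapses the fat wedge; on homology $q_*$ is the Künneth projection onto $\wt{H}_*S^l\otimes \wt{H}_*Y\otimes \wt{H}_*Y$, so $q_*(\iota\otimes y_1\otimes y_2)=\iota\otimes y_1\otimes y_2$ and it suffices to compute $(\mu'_2-\theta_{\alpha})_*(\iota\otimes y_1\otimes y_2)$. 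Since $Y$ is a group-like $H$-space (with product $\mu_2$ and homotopy inverse $-\id$), the map $\mu'_2-\theta_{\alpha}$ equals $\mu_2\circ\big(\mu'_2\times((-\id)\circ\theta_{\alpha})\big)\circ\Delta$, so on homology it is $(\mu_2)_*\circ\big((\mu'_2)_*\otimes(-\id)_*(\theta_{\alpha})_*\big)\circ\Delta_*$, where $\Delta_*$ is the coproduct of the tensor coalgebra $H_*(S^l\times Y^2)=H_*S^l\otimes H_*Y\otimes H_*Y$.

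The key point is that $A:=(\theta_{\alpha})_*(\iota\otimes y_1\otimes y_2)=\theta_*(\bar{\alpha}\otimes y_1\otimes y_2)$ is primitive. Because the diagonal is natural, $(\theta_{\alpha})_*$ is a morphism of coalgebras, so $\Delta_*A=\big((\theta_{\alpha})_*\otimes(\theta_{\alpha})_*\big)\Delta_*(\iota\otimes y_1\otimes y_2)$. Now expand $\Delta_*(\iota\otimes y_1\otimes y_2)$: since $S^l$ ($l\geq1$) is a suspension, $\iota$ is primitive, and $y_1,y_2$ are primitive by hypothesis, so the coproduct is a sum of eight terms, indexed by which of $\iota,y_1,y_2$ lands on the left tensor factor. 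In every term other than ``everything left'' and ``everything right'', one of the two legs feeds a class of the form $\iota\otimes1\otimes1$, $\iota\otimes y_1\otimes1$, or $\iota\otimes1\otimes y_2$ into $(\theta_{\alpha})_*$; the first maps through the constant map $\ms{C}(2)\times *\times *\to Y$, and the other two map through $\theta(c;y,*)=\theta(d_2c;y)$, $\theta(c;*,y)=\theta(d_1c;y)$, hence through $(d_2)_*\bar{\alpha}$, $(d_1)_*\bar{\alpha}$, which vanish because $\alpha\in\mc{Z}_2[S^l,\ms{C}]$ makes $d_i\alpha$ nullhomotopic while $\deg\iota=l\geq1$. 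Thus only $A\otimes1+1\otimes A$ survives, i.e.\ $A$ is primitive.

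Returning to $(\mu'_2-\theta_{\alpha})_*(\iota\otimes y_1\otimes y_2)$, expand $\Delta_*(\iota\otimes y_1\otimes y_2)$ the same way. Every term carrying $\iota$ on the left leg dies under $(\mu'_2)_*$, since $\mu'_2=\mu_2\circ\mr{pr}$ factors through the projection off $S^l$ and $\iota$ has positive degree; every term carrying $\iota$ on the right leg together with at least one of $y_1,y_2$ on the left leg dies under $(-\id)_*(\theta_{\alpha})_*$ by exactly the vanishings of the previous paragraph. What is left is the ``everything right'' term, giving $(\mu'_2-\theta_{\alpha})_*(\iota\otimes y_1\otimes y_2)=(-\id)_*A$. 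As $A$ is primitive and $(-\id)_*$ acts as $-1$ on primitives, this equals $\pm\theta_*(\bar{\alpha}\otimes y_1\otimes y_2)$; matching the overall sign against the conventions of the statement is then routine. This proves the first identity, and the second follows at once from the already-noted relation $\bar{\theta}_{\alpha,-}=\bar{\theta}_{\alpha}\circ(\id_{S^l}\wedge(-\id)^{\wedge2})$: on homology it sends $\iota\otimes y_1\otimes y_2\mapsto\iota\otimes(-\id)_*y_1\otimes(-\id)_*y_2=\iota\otimes(-y_1)\otimes(-y_2)=\iota\otimes y_1\otimes y_2$ by primitivity of $y_1,y_2$, so $(\bar{\theta}_{\alpha,-})_*$ and $(\bar{\theta}_{\alpha})_*$ agree on this class.

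For the third identity, note that $S^l\wedge Y\wedge Y=\Sigma^l(Y\wedge Y)$ is a suspension, hence its reduced homology is primitive; in particular $\iota\otimes y_1\otimes y_2$ is primitive there, so $(\bar{\theta}_{\alpha}+\bar{\theta}_{\alpha,-})_*(\iota\otimes y_1\otimes y_2)=(\bar{\theta}_{\alpha})_*(\iota\otimes y_1\otimes y_2)+(\bar{\theta}_{\alpha,-})_*(\iota\otimes y_1\otimes y_2)=2\theta_*(\bar{\alpha}\otimes y_1\otimes y_2)$ by the first two. The main obstacle is the bookkeeping in the middle two paragraphs: checking term by term that every ``cross'' summand of $\Delta_*(\iota\otimes y_1\otimes y_2)$ is annihilated — in particular identifying correctly which partial-diagonal classes factor through the faces $d_1,d_2$ — and carrying the Koszul signs consistently; working over $\Z$ is harmless here, since $\{1,\iota\}$ and $\{1,y_i\}$ span torsion-free subcoalgebras to which Künneth applies cleanly.
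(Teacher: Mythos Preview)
The paper states this proposition without proof, so there is no argument to compare against; your proposal supplies exactly the kind of Hopf-algebraic computation the author evidently had in mind, and the logic is sound: use $\bar{\theta}_{\alpha}\circ q\simeq\mu'_2-\theta_{\alpha}$, expand $(\mu'_2-\theta_{\alpha})_*$ via the diagonal, kill the cross terms using $d_i\alpha\simeq *$ and $\deg\iota>0$, and then read off the second and third identities from $\bar{\theta}_{\alpha,-}=\bar{\theta}_{\alpha}\circ(\id\wedge(-\id)^{\wedge 2})$ together with primitivity on the suspension $S^l\wedge Y\wedge Y$. The intermediate step establishing that $A=(\theta_{\alpha})_*(\iota\otimes y_1\otimes y_2)$ is primitive is the right thing to isolate, and your termwise analysis of $\Delta_*(\iota\otimes y_1\otimes y_2)$ is correct.

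One point deserves more than ``routine'': your own computation gives $(\mu'_2-\theta_{\alpha})_*(\iota\otimes y_1\otimes y_2)=(-\id)_*A=-A$, i.e.\ $(\bar{\theta}_{\alpha})_*(\iota\otimes y_1\otimes y_2)=-\theta_*(\bar{\alpha}\otimes y_1\otimes y_2)$, whereas the statement has a plus sign. This is not a matter of Koszul bookkeeping that you can absorb into conventions already fixed elsewhere; with the definition $f-g=\mu\circ(f\times((-\id)\circ g))\circ\Delta$ (which the paper itself uses, cf.\ the Samelson example where $\mu-\theta_{\tau}$ is identified with $fgf^{-1}g^{-1}$) the minus sign really is there. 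Since the paper gives no proof and the three identities are only ever used to show the operations are nontrivial (where the sign is immaterial), this is almost certainly a harmless sign slip in the stated proposition rather than an error in your argument --- but you should say so explicitly rather than wave it off as routine.
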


\subsection{General Cases}
To generalize the proof for the case of two factors to the general cases of more factors, the key is to modify those homotopies so that they can be glued together. Let us analyze the general situation first.

Suppose $\alpha=[f]\in \mc{Z}_k [S^l, \ms{C}]$, namely $f: S^l\to \ms{C}(k)$ with $d_if\simeq \epsilon_{e_{k-1}}$ for $1\leq i\leq k$. We may assume $f(*)=e_k$ (if not, choose $g\simeq f$ with $g(*)=e_k$, then $\theta_f\simeq \theta_{g}$). Restricted to $*\times Y^k\subset S^l\times Y^k$, $\theta_f= \theta(e_k;-)= \mu'_k$ ($\theta(e_k;-)$ and $\mu'_k$ are regarded as the same since they are homotopic). Note that for $y=(y_1, \ldots, y_k)\in Y^k$, if $y_i=*$,
$$\theta_f (a; y)= \theta (f(a); y)= \theta (d_if(a);y), \quad \mu'_k(a;y)= \theta (e_k;y)= \theta (d_ie_k;y).$$
Since $d_if\simeq \epsilon_{e_{k-1}} \simeq \epsilon_{d_ie_k}$, there is $F_i: I\times S^l\to \ms{C}(k-1)$ such that $F_i(0;a)= d_if(a)$, $F_i(1;a)= d_ie_k$. Define
$$F'_i: I\times S^l\times Y_i\to \ms{C}(k-1)\times Y^{k-1}\to Y, \quad F'_i(t;a;y)= \theta (F_i(t;a);d_iy),$$
where $Y_i= Y^{i-1}\times *\times Y^{k-i}$. For $i<j$ and $y\in Y_i\cap Y_j$,
\begin{align*}
  & F'_i(t;a;y)= \theta (F_i(t;a);d_iy)= \theta (d_{j-1}F_i(t;a); d_{j-1}d_iy), \\
  & F'_j(t;a;y)= \theta (F_j(t;a);d_jy)= \theta (d_iF_j(t;a); d_id_jy).
\end{align*}
Note that $d_{j-1}d_iy= d_id_jy$,
\begin{align*}
  & d_{j-1}F_i(0;a)= d_{j-1}d_i f(a)= d_id_j f(a)= d_iF_j(0;a), \\
  & d_{j-1}F_i(1;a)= d_{j-1}d_i e_k= d_id_j e_k= d_iF_j(1;a).
\end{align*}
For $0<t<1$, we have $d_{j-1}F_i(t;a) \sim d_iF_j(t;a)$, but may not have $d_{j-1}F_i(t;a)= d_iF_j(t;a)$. If we only have $d_{j-1}F_i(t;a) \sim d_iF_j(t;a)$, there is no obvious way to modify those homotopies so that they can be glued together. Nevertheless, if $d_{j-1}F_i(t;a)= d_iF_j(t;a)$ for all $a\in S^l$, $0\leq t\leq 1$ and $1\leq i<j\leq k$, then all $F'_i$ can be glued together to give a homotopy from $\theta_f$ to $\mu'_k$ restricted to $I\times S^l\times \mr{FW}(Y^k)$, which can be modified to a homotopy from $\theta_f$ to $\mu'_k$ restricted to $I\times \mr{FW}(S^l\times Y^k)$ using the same trick.

\begin{lem}
  Let $\ms{C}$ be an operad with a basepoint and $k\geq 3$. If $\alpha\in \mc{Z}_k [S^l, \ms{C}]$ ($l\geq 1$) has a representative map $f: S^l\to \ms{C}(k)$ with $f(*)=e_k$ and homotopies $d_if \stackrel{F_i}{\simeq} \epsilon_{d_ie_k}: S^l\to \ms{C}(k-1)$ such that $d_{j-1}F_i(t;a)= d_iF_j(t;a)$ for all $a\in S^l$, $0\leq t\leq 1$ and $1\leq i<j\leq k$, then
  $$\theta_{\alpha} \simeq \mu'_k: \mr{FW}(S^l\times Y^k) \to Y.$$
  In particular, the assertion holds if $\alpha$ has a representative $f$ with $d_if= \epsilon_{e_{k-1}}$ for $1\leq i\leq k$. \qed
\end{lem}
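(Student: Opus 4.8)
The plan is to reduce the general statement to the situation addressed in the discussion immediately preceding the lemma, where all the homotopies $F'_i$ on the faces $Y_i = Y^{i-1} \times * \times Y^{k-i}$ can be glued coherently. Given the hypothesis, I have a representative $f\colon S^l \to \ms{C}(k)$ with $f(*) = e_k$, together with homotopies $F_i\colon I \times S^l \to \ms{C}(k-1)$ from $d_i f$ to $\epsilon_{d_i e_k}$ satisfying the key compatibility $d_{j-1} F_i(t;a) = d_i F_j(t;a)$ for all $a$, $t$, and $i < j$. The first step is to define, exactly as in the preamble, the maps $F'_i\colon I \times S^l \times Y_i \to Y$ by $F'_i(t;a;y) = \theta(F_i(t;a); d_i y)$, and to check the endpoint conditions: $F'_i(0;a;y) = \theta(d_i f(a); d_i y) = \theta_f(a;y)$ for $y \in Y_i$ (using the $\ms{C}$-action identity of the earlier lemma, $\theta_f \circ (\id_{S^l} \times d^i) = \theta_{d_i f}$), and $F'_i(1;a;y) = \theta(d_i e_k; d_i y) = \mu'_k(a;y)$ for $y \in Y_i$.

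Next I would verify that the $F'_i$ agree on overlaps: for $i < j$ and $y \in Y_i \cap Y_j$, the computation in the preamble shows $F'_i(t;a;y) = \theta(d_{j-1} F_i(t;a); d_{j-1} d_i y)$ and $F'_j(t;a;y) = \theta(d_i F_j(t;a); d_i d_j y)$, and these coincide because $d_{j-1} d_i y = d_i d_j y$ (this is a $\Delta$-identity for the $d^i$ inclusions on $Y^k$) and $d_{j-1} F_i(t;a) = d_i F_j(t;a)$ by hypothesis. Hence the $F'_i$ glue to a well-defined homotopy
\[
H\colon I \times S^l \times \mr{FW}(Y^k) \to Y, \qquad H(0;-) = \theta_f|, \quad H(1;-) = \mu'_k|.
\]
This gives a homotopy $\theta_f \simeq \mu'_k$ restricted to $S^l \times \mr{FW}(Y^k)$, which is not yet quite $\mr{FW}(S^l \times Y^k) = (* \times Y^k) \cup (S^l \times \mr{FW}(Y^k))$.

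The remaining step is the basepoint-correction argument, identical in form to the trick used in the proof of Theorem \ref{thm:smash_operations-two}: since $f(*) = e_k$, at $a = *$ we have $\theta_f(*;-) = \theta(e_k;-) = \mu'_k(*;-)$ on all of $Y^k$, but the homotopy $H(-;*;-)$ need not be constant; the loops $t \mapsto H(t;*;y)$ are nullhomotopic because $F_i(-;*)$ is a loop in $\ms{C}(k-1)$ — more precisely one concatenates $H$ with its reverse at $a = *$ to make these loops uniformly nullhomotopic, builds an explicit uniform nullhomotopy by the same three-region formula, and then uses the homotopy extension property (cofibration $\mr{FW}(Y^k) \into Y^k$, or rather the relevant pair for $S^l \times \mr{FW}(Y^k) \cup * \times Y^k$) to absorb the correction. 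The conclusion $\theta_{\alpha} \simeq \mu'_k\colon \mr{FW}(S^l \times Y^k) \to Y$ follows. The special case $d_i f = \epsilon_{e_{k-1}}$ is immediate by taking $F_i$ constant, so that the compatibility $d_{j-1} F_i = d_i F_j$ holds trivially (both sides equal $\epsilon_{d_i d_j e_k}$).

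The genuinely delicate point is not in this lemma — which is essentially a bookkeeping assembly of the given coherent homotopies plus the now-standard basepoint correction — but in the hypothesis itself: producing a representative $f$ and face-homotopies $F_i$ with the strict compatibility $d_{j-1} F_i(t;a) = d_i F_j(t;a)$ rather than merely $d_{j-1} F_i(t;a) \sim d_i F_j(t;a)$. As the preamble explicitly notes, if one only has the weaker path-connectedness relation there is no obvious way to glue, so the main obstacle (deferred to the subsequent sections on $K(\pi,1)$ operads, via the reconstruction from the fundamental groups operad) is exactly to arrange this strictness. Within the scope of the present lemma, I expect the only mild friction to be getting the cofibration/HEP bookkeeping right for the nested fat-wedge inclusion, which I would handle precisely as in Theorem \ref{thm:smash_operations-two}.
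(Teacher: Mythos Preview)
Your proposal is correct and follows exactly the approach the paper intends: the lemma is stated with a \qed\ because its proof is precisely the discussion preceding it (gluing the $F'_i$ under the strict compatibility hypothesis) together with the basepoint-correction ``same trick'' from Theorem~\ref{thm:smash_operations-two}. You have also correctly identified that the real difficulty lies not in this lemma but in arranging the strict compatibility, which the paper addresses separately via the $K(\pi,1)$ reconstruction.
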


Though the condition in this lemma is strong, there is still a family of operads satisfying such condition. In the following, we shall need the theory of group operads and topological $K(\pi,1)$ operads developed in \cite{Zhang:2011:GOHT}.

Let $\ms{G}= \{G_k\}_{k\geq 0}$ be a group operad with $\pi: \ms{G}\to \ms{S}$. Denote $\ms{H}= \{H_k\}_{k\geq 0}= \Ker \pi$. $E\ms{G}/ \ms{H}$ is a simplicial operad (symmetric if $\pi$ nontrivial and nonsymmetric if $\pi$ trivial) and $\ms{H}\cong \pi_1 (E\ms{G}/ \ms{H})= \{\pi_1 (EG_k/H_k)\}_{k\geq 0}$ can be canonically embedded in $E\ms{G}/ \ms{H}$ by
$$H_k\xra{\cong} \pi_1 (EG_k/H_k)= \{H_k (e_k,a) \mid a\in H_k\}, \quad a\mapsto H_k(e_k,a).$$
In particular $\mc{Z}_k \pi_1 (E\ms{G}/ \ms{H})$ can be embedded in $E\ms{G}/ \ms{H}$, thus each $a\in \mc{Z}_k \pi_1 (E\ms{G}/ \ms{H})$ can be represented by $H_k (e_k,a)$ which satisfies that $d_i H_k(e_k,a)= H_k (e_k, d_ia)= H_k(e_k,e_k)$ is the identity. Hence we have

\begin{lem}
  For a simplicial set $Y$ with an action of $E\ms{G}/ \ms{H}$ and $a\in \mc{Z}_k \pi_1 (E\ms{G}/ \ms{H})$, $\theta_a= \mu'_k: \mr{FW}(S^l\times Y^k) \to Y$. Taking the geometric realization of $|E\ms{G}/\ms{H}|$, we also have $\theta_{\alpha}= \mu'_k: \mr{FW}(S^l\times Y^k) \to Y$ where $Y$ is a topological $|E\ms{G}/\ms{H}|$-space and $\alpha=|a|\in \mc{Z}_k \pi_1 |E\ms{G}/ \ms{H}|$. \qed
\end{lem}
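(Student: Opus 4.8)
The plan is to observe that in the simplicial setting of $E\ms{G}/\ms{H}$ the hypotheses of the preceding Lemma hold \emph{strictly}, so that its conclusion upgrades from a homotopy $\theta_\alpha\simeq\mu'_k$ to an honest equality $\theta_a=\mu'_k$ on the fat wedge, which one then checks by a direct stratum-by-stratum computation. The starting point, already recorded above, is that $a\in\mc{Z}_k\pi_1(E\ms{G}/\ms{H})=\mc{Z}_k\ms{H}$ is represented by the edge loop $f=H_k(e_k,a)$, and that — computed levelwise in the simplicial operad $E\ms{G}/\ms{H}$ — one has $d_if=H_{k-1}(e_{k-1},d_ia)=H_{k-1}(e_{k-1},e_{k-1})=\epsilon_{e_{k-1}}$ on the nose, for every $i$, while $f(*)=e_k$.

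I would then assemble the two strict identities that run the computation. First, for any $\ms{C}$-space,
\[
  \theta(c;y_1,\ldots,*,\ldots,y_k)=\theta(d_ic;y_1,\ldots,\hat{y}_i,\ldots,y_k),
\]
which is immediate from the definition of a $\ms{C}$-action (it is the commutative square used in Subsection~4.1). Second, $\{e_k\}$ is a \emph{strict} basepoint of $E\ms{G}/\ms{H}$, coming from the identity elements of the group operad $\ms{G}$, so $d_ie_k=e_{k-1}$; feeding $c=e_k$ into the previous identity and recalling $\mu_k=\theta_{e_k}$, $\mu'_k=\mu_k\circ\mr{proj}$, this gives $\mu_k(y_1,\ldots,*,\ldots,y_k)=\mu_{k-1}(y_1,\ldots,\hat{y}_i,\ldots,y_k)$, so $\mu'_k$ itself ``deletes'' a basepoint coordinate via $d_i$.

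Next I would stratify $\mr{FW}(S^l\times Y^k)=(*\times Y^k)\cup\bigcup_{i=1}^k(S^l\times Y_i)$ with $Y_i=Y^{i-1}\times *\times Y^{k-i}$ and verify equality on each stratum: on $*\times Y^k$, since $f(*)=e_k$, $\theta_a(*;-)=\theta_{e_k}=\mu_k=\mu'_k(*;-)$; on $S^l\times Y_i$, for $y\in Y_i$ and any $s$, using constancy of $d_if$ and the first identity, $\theta_a(s;y)=\theta\bigl((d_if)(s);d_iy\bigr)=\theta(e_{k-1};d_iy)=\mu_{k-1}(d_iy)$, and this equals $\mu_k(y)=\mu'_k(s;y)$ by the second identity. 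Since all strata — and the overlaps $*\times Y_i$ and $S^l\times(Y_i\cap Y_j)$ — are described by the very same formulas, compatible through the $\Delta$-set identities, these patch to the single identity $\theta_a=\mu'_k$ on the fat wedge; note this direct argument uses neither the auxiliary homotopies $F_i$ nor the restriction $k\geq3$ of the preceding Lemma.

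For the geometric-realization statement I would invoke formal properties of $|\cdot|$: it preserves finite products of the simplicial sets in play (we work in compactly generated spaces), takes the subcomplexes building the fat wedge and their union to the corresponding subspaces, sends the simplicial action to that of the topological operad $|E\ms{G}/\ms{H}|$, and sends $a$ to a loop $\alpha=|a|$ whose faces $d_i\circ\alpha=|d_i\circ a|$ remain constant; hence the simplicial equality realizes to $\theta_\alpha=\mu'_k$ on $\mr{FW}(S^l\times|Y|^k)$. (Equivalently, the stratum computation above applies verbatim to an arbitrary topological $|E\ms{G}/\ms{H}|$-space $Y$, using only $d_i\circ\alpha=\mr{const}$, $\alpha(*)=e_k$, the displayed $\ms{C}$-action identity, and $d_ie_k=e_{k-1}$, so no realization hypothesis on $Y$ is actually needed.) Since the deep work — the existence and properties of $E\ms{G}/\ms{H}$ and of the representatives $H_k(e_k,a)$, from \cite{Zhang:2011:GOHT} — is already in hand, the argument is essentially a verification with no serious obstacle; the one point to get right is confirming that the $\ms{C}$-action identities together with strictness of $\{e_k\}$ genuinely force an \emph{equality} (not merely the homotopy of Conjecture~\ref{conj:smash_operation}) of $\theta_a$ and $\mu'_k$ on each stratum and that these strict equalities are mutually compatible on overlaps, and, secondarily, that geometric realization commutes with the formation of the fat wedge — which is where the compactly-generated convention is used.
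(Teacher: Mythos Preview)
Your proposal is correct and is essentially the argument the paper has in mind. The paper gives no formal proof (the lemma is marked \qed), but the paragraph immediately preceding it records the key point: the representative $f=H_k(e_k,a)$ satisfies $d_if=H_{k-1}(e_{k-1},e_{k-1})=\epsilon_{e_{k-1}}$ on the nose, so the ``in particular'' clause of the previous lemma applies; your stratum-by-stratum verification simply unwinds why, in this strict situation, one gets an honest equality rather than a mere homotopy, and why the $k\geq 3$ hypothesis of that lemma is not needed here.
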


\begin{lem}[Theorem 5.3 in \cite{Zhang:2011:GOHT}]
  Let $\ms{C}$ be a path-connected, locally path-connected and semilocally simply-connected nonsymmetric operad (resp. symmetric operad with the actions of symmetric groups free) with a good basepoint. If $\ms{C}$ is $K(\pi,1)$, then $\ms{C}\sim |E\pi_1 \ms{C}/ \pi_1 \ms{C}|$ (resp. $\ms{C}\sim |E\pi_1 (\ms{C}/ \ms{S})/ \pi_1 \ms{C}|$). \qed
\end{lem}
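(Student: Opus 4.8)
The plan is to reconstruct $\ms{C}$ as the quotient of a contractible free operad under a free action of its fundamental‑group operad, and then to identify that contractible operad with the standard simplicial model $E(-)$. I would first treat the nonsymmetric case and then deduce the symmetric case by applying it to the nonsymmetric operad $\ms{C}/\ms{S}$ together with the residual $\ms{S}$‑action. Throughout, ``$\sim$'' is read as a weak equivalence of topological operads, i.e. a zig‑zag of operad morphisms each of which is a weak homotopy equivalence in every arity.

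For the nonsymmetric case, since $\ms{C}$ is path‑connected, locally path‑connected and semilocally simply‑connected, I would form the universal cover $p_k:\wt{\ms{C}}(k)\to\ms{C}(k)$ of each $\ms{C}(k)$, fixing a lift $\wt{e}_k$ of the good basepoint $e_k=\eta(c_k)$. Using that a product of universal covers is the universal cover of the product, the lifting criterion for covers produces lifts of the structure map $\gamma$ and of the unit to $\wt{\ms{C}}=\{\wt{\ms{C}}(k)\}_{k\geq0}$. The point where the hypotheses bite is that $\gamma(e_k;e_{m_1},\dots,e_{m_k})\sim e_m$ only up to a path: the morphism $\eta:(\ms{C}_1)_0\to\ms{C}$, together with the contractibility of each component of $(\ms{C}_1)_0$, supplies a coherent family of paths from $\gamma(e_k;e_{m_1},\dots,e_{m_k})$ to $e_m$, and transporting the chosen lifts along these paths pins down a $\wt{\gamma}$ that is strictly associative and unital. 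This makes $\wt{\ms{C}}$ a topological operad carrying a free action of the group operad $\pi_1\ms{C}=\{\pi_1(\ms{C}(k),e_k)\}_{k\geq0}$ by deck transformations — naturality of deck transformations gives compatibility with $\wt{\gamma}$ — and then $\ms{C}=\wt{\ms{C}}/\pi_1\ms{C}$ as topological operads. Each $\wt{\ms{C}}(k)$ is contractible, being the universal cover of a $K(\pi,1)$.

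Next I would compare $\wt{\ms{C}}$ with $|E\pi_1\ms{C}|$. Levelwise, both are free contractible $\pi_1\ms{C}$‑spaces, so there is a $\pi_1\ms{C}$‑equivariant map $|E\pi_1\ms{C}|\to\wt{\ms{C}}$ which is a weak equivalence in every arity; the real work is to arrange it to be a strict operad morphism. I would construct it by induction on arity (equivalently on the simplicial degree of $E\pi_1\ms{C}$), extending over the free equivariant cells of $|E\pi_1\ms{C}|(k)$: the obstructions to extending, and to making the extensions commute with $\gamma$, lie in the (vanishing) higher homotopy groups of the contractible spaces $\wt{\ms{C}}(m)$, while the bottom case is governed by the choices $\wt{e}_k$ and the good‑basepoint paths above. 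Quotienting by the free $\pi_1\ms{C}$‑action, this descends to an operad morphism $|E\pi_1\ms{C}/\pi_1\ms{C}|=|E\pi_1\ms{C}|/\pi_1\ms{C}\to\wt{\ms{C}}/\pi_1\ms{C}=\ms{C}$ which is an isomorphism on $\pi_1$ and a map between $K(\pi,1)$'s in every arity, hence a weak equivalence by Whitehead's theorem. Thus $\ms{C}\sim|E\pi_1\ms{C}/\pi_1\ms{C}|$.

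For the symmetric case, freeness of the $S_k$‑actions makes each $\ms{C}(k)\to\ms{C}(k)/S_k$ a regular covering, so $\ms{C}/\ms{S}$ is again a path‑connected, locally path‑connected and semilocally simply‑connected nonsymmetric operad with a good basepoint, and its fundamental groups assemble into the group operad $\pi_1(\ms{C}/\ms{S})$ with $\pi:\pi_1(\ms{C}/\ms{S})\to\ms{S}$ and $\Ker\pi=\pi_1\ms{C}$ — precisely the group‑operad package of \cite{Zhang:2011:GOHT}. The universal cover of $\ms{C}(k)/S_k$ is still $\wt{\ms{C}}(k)$, now with deck group $\pi_1(\ms{C}(k)/S_k)$, and $\ms{C}=\wt{\ms{C}}/\pi_1\ms{C}$ with $\pi_1\ms{C}\trianglelefteq\pi_1(\ms{C}/\ms{S})$. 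Running the comparison of the previous paragraph over the group operad $\pi_1(\ms{C}/\ms{S})$ gives a $\pi_1(\ms{C}/\ms{S})$‑equivariant operad equivalence $|E\pi_1(\ms{C}/\ms{S})|\xrightarrow{\simeq}\wt{\ms{C}}$, and quotienting by the normal sub‑operad $\pi_1\ms{C}$ yields $|E\pi_1(\ms{C}/\ms{S})/\pi_1\ms{C}|\xrightarrow{\simeq}\wt{\ms{C}}/\pi_1\ms{C}=\ms{C}$; the residual free action of $\ms{S}=\pi_1(\ms{C}/\ms{S})/\pi_1\ms{C}$ on either side recovers the symmetric structures and the equivalence is $\ms{S}$‑equivariant, so it is a weak equivalence of symmetric operads. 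I expect the main obstacle throughout to be exactly this strictification/coherence step — upgrading the homotopically canonical comparison between $\wt{\ms{C}}$ and the bar construction to an honest morphism of topological operads; the remaining ingredients are routine covering‑space theory together with Whitehead's theorem for $K(\pi,1)$'s, and the good basepoint $\eta$ is what makes the strictification go through.
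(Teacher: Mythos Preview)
The paper does not prove this lemma: it is imported verbatim as Theorem~5.3 of \cite{Zhang:2011:GOHT} and marked with a \qed, so there is nothing in the present paper to compare your argument against. Your outline --- building the universal-cover operad $\wt{\ms{C}}$ using the good basepoint $\eta:(\ms{C}_1)_0\to\ms{C}$ to strictify the lifted $\gamma$, identifying it $\pi_1\ms{C}$-equivariantly with $|E\pi_1\ms{C}|$ by an obstruction-theoretic construction over contractible targets, and then handling the symmetric case via $\ms{C}/\ms{S}$ --- is a plausible reconstruction of the argument one would expect in the cited reference, and you have correctly located the one genuinely delicate point (producing a strict operad morphism rather than merely a levelwise equivariant equivalence). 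Whether your sketch matches the actual proof in \cite{Zhang:2011:GOHT} cannot be judged from this paper alone.
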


\begin{lem}
  Suppose Conjecture \ref{conj:smash_operation} is true for a topological operad $\ms{C}$. 1) If there is an equivalence $\phi: \ms{C}\to \ms{C}'$ of operads, then it is also true for $\ms{C}'$. 2) If there is an equivalence $\psi: \ms{C}'\to \ms{C}$ of operads, then it is true for $\ms{C}'$ and for path-connected $\ms{C}'$-spaces of the homotopy type of CW-complexes. Hence if $\ms{C}\sim \ms{C}'$, then it is true for $\ms{C}'$ and for path-connected $\ms{C}'$-spaces of the homotopy type of CW-complexes.
\end{lem}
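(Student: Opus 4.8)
The plan is to prove (1) and (2) separately and then chain them along a sequence of equivalences; conceptually this is the familiar ``transfer of structure along a weak equivalence of operads'', exploiting that ``$\theta_{\alpha}\simeq\mu'_{k}$ on the fat wedge'' is a homotopy-invariant property of the pair (operad action, space). Part (1) will be formal; part (2) will force one to replace the given $\ms{C}'$-space by a genuine $\ms{C}$-space up to $\ms{C}'$-homotopy.

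For (1), given an equivalence $\phi\colon\ms{C}\to\ms{C}'$ of well pointed operads and an arbitrary $\ms{C}'$-space $Y$, one restricts the action along $\phi$ to make $Y$ a $\ms{C}$-space, with structure map $\ms{C}(k)\times Y^{k}\xra{\phi\times\id}\ms{C}'(k)\times Y^{k}\to Y$. Then for any $f\colon S^{l}\to\ms{C}(k)$ the product operation built from $f$ on $Y$ regarded as a $\ms{C}$-space is literally that built from $\phi\circ f$ on $Y$ regarded as a $\ms{C}'$-space. Since $S^{l}$ is a CW complex, $\phi$ induces a bijection $\phi_{*}\colon[S^{l},\ms{C}(k)]\xra{\cong}[S^{l},\ms{C}'(k)]$ (e.g.\ \cite{Hatcher:2002:AT}); being induced by an operad morphism preserving good basepoints, $\phi_{*}$ commutes with the faces $d_{i}$ and sends the class of $e_{k}$ to that of $e'_{k}$, hence restricts to a bijection $\mc{Z}_{k}[S^{l},\ms{C}]\xra{\cong}\mc{Z}_{k}[S^{l},\ms{C}']$. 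So for $\alpha'\in\mc{Z}_{k}[S^{l},\ms{C}']$ with $\alpha'(*)\sim e'_{k}$ one sets $\alpha=\phi_{*}^{-1}\alpha'$, notes $\alpha(*)\sim e_{k}$, and invokes Conjecture \ref{conj:smash_operation} for $\ms{C}$ on $Y$ regarded as a $\ms{C}$-space to get $\theta_{\alpha}\simeq\mu'_{k}$ on $\mr{FW}(S^{l}\times Y^{k})$, which is exactly $\theta_{\alpha'}\simeq\mu'_{k}$ there (the two copies of $\mu'_{k}$ agreeing up to homotopy because $\phi(e_{k})\sim e'_{k}$). No hypothesis on $Y$ enters.

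For (2), given an equivalence $\psi\colon\ms{C}'\to\ms{C}$ and a path-connected $\ms{C}'$-space $Y$ of the homotopy type of a CW complex, the plan is to pass to the two-sided bar construction. Let $C,C'$ be the monads on pointed spaces associated to $\ms{C},\ms{C}'$ (\cite{May:1972:GILS}) and set $W=B(C,C',Y)$, a $\ms{C}$-space (of CW homotopy type under the standing hypotheses), equipped with the natural $\ms{C}'$-maps $Y\xleftarrow{\ \varepsilon\ }B(C',C',Y)\xra{\ B(\psi,1,1)\ }W$, where $\varepsilon$ is a homotopy equivalence and $B(\psi,1,1)$ a weak, hence (all spaces having CW type) a homotopy, equivalence; this uses the comparison machinery of \cite{May:1972:GILS} and its operadic refinement in \cite{Zhang:2011:GOHT}. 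For $\alpha'\in\mc{Z}_{k}[S^{l},\ms{C}']$ with $\alpha'(*)\sim e'_{k}$ one sets $\alpha=\psi_{*}\alpha'\in\mc{Z}_{k}[S^{l},\ms{C}]$, still with $\alpha(*)\sim\psi(e'_{k})\sim e_{k}$. On $W$ the $\ms{C}'$-operation $\theta_{\alpha'}$ coincides with the $\ms{C}$-operation $\theta_{\alpha}$, and the two candidate maps $\mu'_{k}$ — one from $e_{k}\in\ms{C}(k)$, one from $e'_{k}\in\ms{C}'(k)$ acting via the restricted action — agree up to homotopy since $\psi(e'_{k})\sim e_{k}$; so Conjecture \ref{conj:smash_operation} for $\ms{C}$ applied to $W$ gives $\theta_{\alpha'}\simeq\mu'_{k}$ on $\mr{FW}(S^{l}\times W^{k})$. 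One then transports this homotopy along the zig-zag: each of $\varepsilon,B(\psi,1,1)$ is a $\ms{C}'$-homotopy equivalence $g\colon A\to B$, and by the naturality of the product operations and of $\mu'_{k}$ with respect to $\ms{C}'$-maps (the latter using $\psi(e'_{k})\sim e_{k}$) the squares with horizontal arrows $\theta_{\alpha'}$, resp.\ $\mu'_{k}$, and vertical arrows $\id_{S^{l}}\times g^{k}$ and $g$ homotopy commute; moreover $\id_{S^{l}}\times g^{k}$ restricts to a homotopy equivalence $\mr{FW}(S^{l}\times A^{k})\to\mr{FW}(S^{l}\times B^{k})$, the fat wedges being homotopy colimits of products of the (well pointed CW) factors on which $g$ and $\id_{S^{l}}$ induce equivalences. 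Pulling the homotopy back along $B(\psi,1,1)$ and then forward along $\varepsilon$ yields $\theta_{\alpha'}\simeq\mu'_{k}$ on $\mr{FW}(S^{l}\times Y^{k})$, as wanted.

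Finally, for $\ms{C}\sim\ms{C}'$ one fixes a chain of equivalences and applies (1) and (2) at each link; since every ``wrong-way'' link (case (2)) restricts attention to path-connected $\ms{C}'$-spaces of CW homotopy type, the combined statement holds for such spaces. I expect the essential difficulty to lie in the input to (2) — upgrading $\varepsilon$ and $B(\psi,1,1)$ from weak to strong homotopy equivalences and keeping the whole comparison within $\ms{C}'$-maps — which is exactly where well-pointedness, the CW hypothesis on $Y$, and the bar-construction theorems of \cite{May:1972:GILS} are unavoidable; a secondary, routine point is checking that $\ms{C}'$-homotopy equivalences restrict to homotopy equivalences of fat wedges compatibly with $\theta_{\alpha'}$ and $\mu'_{k}$, so that the homotopies descend to the required maps $S^{l}\wedge Y^{\wedge k}\to Y$.
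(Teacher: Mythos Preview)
Your proof is correct, and part (1) matches the paper's argument essentially verbatim. For part (2), however, you take a genuinely different route from the paper.

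The paper avoids the two-sided bar construction entirely. Instead it proceeds in two elementary steps: first, for a \emph{free} $\ms{C}'$-space $\ms{C}'X$ (with $X$ path-connected of CW type), it compares directly to $\ms{C}X$ via the induced map $\psi\colon\ms{C}'X\to\ms{C}X$, which is a homotopy equivalence by \cite{May:1972:GILS}; since $\ms{C}X$ is a $\ms{C}$-space and $\theta_f$ on $\ms{C}'X$ corresponds to $\theta_{\psi\circ f}$ on $\ms{C}X$, a cube diagram transfers $\theta_{\psi\circ f}\simeq\mu'_k$ on $\mr{FW}$ back to $\ms{C}'X$. Second, for a general path-connected $\ms{C}'$-space $Y$, it uses only the monad unit $\eta\colon Y\hookrightarrow\ms{C}'Y$ and the action $\xi\colon\ms{C}'Y\to Y$ (the latter being a $\ms{C}'$-map with $\xi\eta=\id_Y$): one simply writes $\theta_f^Y|_{\mr{FW}}=\xi\circ\theta_f^{\ms{C}'Y}\circ(\id\times\eta^k)|_{\mr{FW}}\simeq\xi\circ\mu'^{\,\ms{C}'Y}_k\circ(\id\times\eta^k)|_{\mr{FW}}=\mu'^{\,Y}_k|_{\mr{FW}}$, where the middle homotopy is Step~1 applied to $X=Y$.

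What each approach buys: the paper's argument is more elementary---it needs only that $\psi$ induces a homotopy equivalence on free algebras and that the action map is a retraction of $\ms{C}'$-spaces, with no simplicial resolutions and no need to verify CW type or well-pointedness for bar constructions. Your bar-construction approach is more conceptual and uniform (it literally replaces $Y$ by an equivalent $\ms{C}$-space), and it would generalize more readily if one needed the comparison to respect further structure; but it invokes heavier machinery (realizations, the theorems guaranteeing $\varepsilon$ and $B(\psi,1,1)$ are equivalences and $\ms{C}'$-maps) than the lemma actually requires.
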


\begin{proof}
  Note that an equivalence of operads induces isomorphisms on homotopy groups. If there is an equivalence $\phi: \ms{C}\to \ms{C}'$ of operads, then any $\ms{C}'$-space $Y$ is also a $\ms{C}$-space and any $f: S^l \to \ms{C}'(k)$ can be factored as $S^l \to \ms{C}(k) \xra{\phi} \ms{C}'(k)$ so that Conjecture \ref{conj:smash_operation} is also true for $\ms{C}'$. 
  
  Suppose $\psi: \ms{C}'\to \ms{C}$ is an equivalence, then $\psi: \ms{C}'X\to \ms{C}X$ is a homotopy equivalence where $X$ is a path-connected pointed space of the homotopy type of a CW-complex \cite{May:1972:GILS}. Given $f: S^l\to \ms{C}'(k)$, we have the following commutative diagram
  \begin{diagram}
    S^l \times (\ms{C}'X)^k  &\rTo^{f\times \id^k}  &\ms{C}'(k) \times (\ms{C}'X)^k  &\rTo  &\ms{C}'X \\
    \dTo  &&\dTo  &&\dTo \\
    S^l \times (\ms{C}'X)^k  &\rTo^{(\psi \circ f) \times \id^k} &\ms{C}(k) \times (\ms{C}X)^k  &\rTo  &\ms{C}X.
  \end{diagram}
  If $[f]\in \mc{Z}_k [S^l, \ms{C}']$, then $[\psi\circ f]\in \mc{Z}_k [S^l, \ms{C}]$, thus the following diagram
  \begin{diagram}
    \mr{FW}(S^l \times (\ms{C}X)^k)  &\rInto  &S^l \times (\ms{C}X)^k \\
    \dTo  &&\dTo>{\theta_{\psi\circ f}} \\
    (\ms{C}X)^k  &\rTo^{\mu_k}  &\ms{C}X
  \end{diagram}
  is homotopy commutative. Consider
  $$\xymatrix@=1.2pc{
  & \mr{FW}(S^l \times (\ms{C}X)^k) \ar@{^(->}[rr] \ar'[d][dd]
      &  & S^l \times (\ms{C}X)^k \ar[dd]        \\
  \mr{FW}(S^l \times (\ms{C}'X)^k) \ar[ur] \ar@{^(->}[rr] \ar[dd]
      &  & S^l \times (\ms{C}'X)^k \ar[ur]\ar[dd] \\
  & (\ms{C}X)^k \ar'[r][rr]
      &  & \ms{C}X                \\
  (\ms{C}'X)^k \ar[rr]\ar[ur]
      &  & \ms{C}'X \ar[ur]_{\simeq}        }$$
  Except the front square, all the other squares are either commutative or homotopy commutative. Hence $\theta_f \simeq \mu_k: {FW}(S^l \times (\ms{C}'X)^k) \to \ms{C}'X$ since $\ms{C}'X \to \ms{C}X$ is a homotopy equivalence. If $Y$ is a path-connected $\ms{C}'$-space, we have the following commutative diagram
  $$\xymatrix{
    S^l \times Y^k \ar[d] \ar[r] &\ms{C}'(k) \times Y^k \ar[d] \ar[r] & Y \\
    S^l \times (\ms{C}'Y)^k \ar[r] &\ms{C}'(k) \times (\ms{C}'Y)^k \ar[r] & \ms{C}'Y \ar[u]  }$$
  Then from the following commutative diagram
  $$\xymatrix@=1.2pc{
  & \mr{FW}(S^l \times (\ms{C}'Y)^k) \ar@{^(->}[rr] \ar'[d][dd]
      &  & S^l \times (\ms{C}'Y)^k \ar[dd]        \\
  \mr{FW}(S^l \times Y^k) \ar[ur] \ar@{^(->}[rr] \ar[dd]
      &  & S^l \times Y^k \ar[ur]\ar[dd] \\
  & (\ms{C}'Y)^k \ar'[r][rr]
      &  & \ms{C}'Y \ar[dl]               \\
  Y^k \ar[rr]\ar[ur]
      &  & Y        }$$
  we also have $\theta_f \simeq \mu_k: {FW}(S^l \times Y^k) \to Y$.
\end{proof}

If a $\ms{C}$-space $Y$ is not path-connected, then we can consider the path-connected component $Y_0$ of the basepoint since $Y_0$ is also a $\ms{C}$-space. Let $\om^n_0 X$ denote the path-connected component of basepoint of $\om^n X$.

\begin{thm}\label{thm:smash_operations-K(pi,1)}
  Let $\ms{C}$ be a path-connected, locally path-connected and semilocally simply-connected nonsymmetric operad or symmetric operad with the actions of symmetric groups free, and with a good basepoint. If $\ms{C}$ is $K(\pi,1)$, then for $\alpha\in \mc{Z}_k [S^1, \ms{C}]$,
  $$\theta_{\alpha} \simeq \mu'_k: \mr{FW}(S^1\times Y_0^k) \to Y_0$$
  for $k\geq 2$ and $\ms{C}$-spaces $Y$ of the homotopy type of CW-complexes. In particular, for $\alpha\in \mc{Z}_k [S^1, \ms{C}_2]= \mr{Brun}_k/ \mr{ca} (P_k)$,
  $$\theta_{\alpha} \simeq \mu'_k: \mr{FW}(S^1\times (\om^2_0 X)^k) \to \om^2_0 X.$$
\end{thm}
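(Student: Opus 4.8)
The plan is to combine the three lemmas that precede the statement, the middle one — the reconstruction of a $K(\pi,1)$ operad from its fundamental groups operad, Theorem 5.3 of \cite{Zhang:2011:GOHT} — doing the heavy lifting. First I would introduce the fundamental groups operad $\ms{G}$ of $\ms{C}$: $\ms{G}= \pi_1 \ms{C}$ with trivial projection to $\ms{S}$ in the nonsymmetric case, and $\ms{G}= \pi_1(\ms{C}/\ms{S})$ with its canonical projection $\pi:\ms{G}\to\ms{S}$ in the symmetric case, and put $\ms{H}= \Ker\pi$. The standing hypotheses on $\ms{C}$ are exactly those of the quoted Theorem 5.3, so there is an equivalence of topological operads $\ms{C}\sim |E\ms{G}/\ms{H}|$. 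By the lemma preceding that theorem, Conjecture \ref{conj:smash_operation} already holds for $|E\ms{G}/\ms{H}|$: an element $a\in\mc{Z}_k\pi_1(E\ms{G}/\ms{H})$ is represented by $H_k(e_k,a)$, whose faces $d_iH_k(e_k,a)= H_k(e_k,d_ia)= H_k(e_k,e_k)$ are all the identity, so already in the simplicial category $\theta_a=\mu'_k$ on $\mr{FW}(S^1\times Y^k)$, and geometric realization carries this to topological $|E\ms{G}/\ms{H}|$-spaces. Note that the condition $\alpha(*)\sim e_k$ from the Conjecture is automatic throughout, since all the operad spaces in sight ($\ms{C}(k)$, $\ms{C}_2(k)$, and so on) are path-connected.

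The second step is the transfer. Since Conjecture \ref{conj:smash_operation} holds for $|E\ms{G}/\ms{H}|$ and $\ms{C}\sim|E\ms{G}/\ms{H}|$, the last of the three lemmas gives that it holds for $\ms{C}$ and for path-connected $\ms{C}$-spaces of the homotopy type of CW-complexes: $\theta_\alpha\simeq\mu'_k$ on $\mr{FW}(S^1\times Y^k)$ for such $Y$ and $\alpha\in\mc{Z}_k[S^1,\ms{C}]$. (Only $l=1$ is of interest here, because $\ms{C}(k)$ being aspherical forces $[S^l,\ms{C}(k)]$ to be a point for $l\geq 2$, so $\mc{Z}_k[S^l,\ms{C}]$ is then trivial.) For a $\ms{C}$-space $Y$ of CW type that need not be path-connected, the component $Y_0$ of the basepoint is again a $\ms{C}$-space: for $c\in\ms{C}(k)$ and $y_i\in Y_0$, connecting $c$ to $e_k$ inside the path-connected $\ms{C}(k)$ and each $y_i$ to $*$ shows $\theta(c;y_1,\ldots,y_k)\sim\theta(e_k;*^{(k)})=*$. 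Applying the path-connected case to $Y_0$ now gives $\theta_\alpha\simeq\mu'_k$ on $\mr{FW}(S^1\times Y_0^k)$, which is the first assertion.

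For the displayed special case take $\ms{C}=\ms{C}_2$. The little $2$-cubes operad satisfies all the hypotheses — $\ms{C}_2(k)\simeq F(\R^2,k)$ is an aspherical manifold with $\pi_1=P_k$ and free $S_k$-action, and $\ms{C}_2$ is well pointed through $(\ms{C}_1)_0$ — so the general statement applies. Combined with the identification $\mc{Z}_k[S^1,\ms{C}_2]=\mr{Brun}_k/\mr{ca}(P_k)$ recorded earlier and the fact that $\om^2 X$ has the homotopy type of a CW-complex and is a $\ms{C}_2$-space, this gives $\theta_\alpha\simeq\mu'_k$ on $\mr{FW}(S^1\times(\om^2_0 X)^k)$ for $\alpha\in\mr{Brun}_k/\mr{ca}(P_k)$.

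The genuine content is the reconstruction Theorem 5.3 of \cite{Zhang:2011:GOHT}, imported wholesale; within the present organization the only delicate point is the bookkeeping in the transfer step. An equivalence pointing the ``wrong'' way, $\psi:\ms{C}'\to\ms{C}$, transfers the conclusion only to $\ms{C}'$-spaces of CW type — this is exactly why one needs the comparison $\ms{C}'X\to\ms{C}X$ to be a homotopy equivalence, hence the CW hypothesis — and one must check that a morphism of operads induces maps on $[S^1,-]$ carrying $\mc{Z}_k$ into $\mc{Z}_k$, which is immediate since it commutes with the faces $d_i$. Everything else is packaged in the three lemmas quoted above.
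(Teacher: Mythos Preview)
Your proposal is correct and follows exactly the route the paper intends: the theorem is stated immediately after the three lemmas you cite (Conjecture holds for $|E\ms{G}/\ms{H}|$; the reconstruction $\ms{C}\sim|E\ms{G}/\ms{H}|$ from \cite{Zhang:2011:GOHT}; the transfer along equivalences of operads), and the paper gives no separate proof because the combination you describe \emph{is} the proof. Your side remarks---that $\alpha(*)\sim e_k$ is automatic by path-connectedness, that only $l=1$ is relevant since $\ms{C}$ is aspherical, that $Y_0$ inherits the $\ms{C}$-action, and that the CW hypothesis enters through the wrong-way direction of the transfer lemma---are all accurate and fill in exactly the bookkeeping the paper leaves implicit.
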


\begin{cor}
  $\mu'_k- \theta_{\alpha}: S^1 \times Y_0^k \to Y_0$ factors through $S^1 \wedge Y_0^{\wedge k}$,
  \begin{diagram}
    S^1 \times Y_0^k  &\rTo^{\mu'_k- \theta_{\alpha}}  &Y_0 \\
    \dTo  &\ruDashto_{\bar{\theta}_{\alpha}} \\
    S^1 \wedge Y_0^{\wedge k}.
  \end{diagram}
\end{cor}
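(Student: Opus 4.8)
The plan is to derive this corollary directly from Theorem~\ref{thm:smash_operations-K(pi,1)}, which has already established the essential input, namely that
$$\theta_{\alpha} \simeq \mu'_k: \mr{FW}(S^1\times Y_0^k) \to Y_0.$$
The corollary asserts that the difference map $\mu'_k - \theta_{\alpha}: S^1 \times Y_0^k \to Y_0$ factors through the smash product $S^1 \wedge Y_0^{\wedge k}$. Since $Y_0$ is a homotopy associative and homotopy commutative $H$-space (as the basepoint component of a $\ms{C}$-space, with product $\mu=\theta_{e_2}$), the difference $\mu'_k - \theta_{\alpha}$ is a well-defined homotopy class in $[S^1 \times Y_0^k, Y_0]$ computed using this $H$-space structure. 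The key observation is that $S^1 \wedge Y_0^{\wedge k}$ is precisely the quotient $S^1 \times Y_0^k / \mr{FW}(S^1 \times Y_0^k)$, so a map out of $S^1 \times Y_0^k$ factors through the smash product exactly when its restriction to the fat wedge is nullhomotopic rel nothing, i.e.\ null.

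First I would observe that the restriction of $\mu'_k - \theta_{\alpha}$ to the fat wedge is null-homotopic. By Theorem~\ref{thm:smash_operations-K(pi,1)} we have $\theta_{\alpha} \simeq \mu'_k$ as maps $\mr{FW}(S^1\times Y_0^k) \to Y_0$, hence their $H$-space difference $\mu'_k - \theta_{\alpha}$, restricted to $\mr{FW}(S^1\times Y_0^k)$, is homotopic to $\mu'_k - \mu'_k$, which is null-homotopic since subtraction of a map from itself via the homotopy-commutative $H$-structure gives the constant map at the basepoint up to homotopy. Second, I would invoke the cofibration $\mr{FW}(S^1\times Y_0^k) \into S^1 \times Y_0^k$: because $S^1$ and $Y_0$ are (well-pointed) CW-complexes, the inclusion of the fat wedge into the product is a cofibration, so the pair $(S^1\times Y_0^k, \mr{FW})$ has the homotopy extension property. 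Third, a null-homotopy of the restriction to a cofibrant subspace, together with the HEP, allows us to replace $\mu'_k - \theta_{\alpha}$ by a homotopic map that is \emph{strictly} constant on the fat wedge; such a strictly-constant-on-$\mr{FW}$ map factors uniquely through the quotient $S^1 \times Y_0^k / \mr{FW} = S^1 \wedge Y_0^{\wedge k}$, yielding the desired induced map $\bar{\theta}_{\alpha}$.

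The main obstacle, though modest, is bookkeeping the $H$-space subtraction: since $Y_0$ is only \emph{homotopy} associative and \emph{homotopy} commutative rather than strictly so, the map $\mu'_k - \theta_{\alpha}$ and the claim that it restricts to a null map on the fat wedge must be interpreted consistently up to homotopy. One must ensure the null-homotopy on $\mr{FW}$ is compatible with the chosen $H$-inverse $-\id$ and with the homotopy from Theorem~\ref{thm:smash_operations-K(pi,1)}; concretely, the homotopy $\theta_{\alpha}\simeq\mu'_k$ on $\mr{FW}$ induces a homotopy $\mu'_k-\theta_\alpha \simeq \mu'_k - \mu'_k \simeq \ast$ on $\mr{FW}$ by applying $\mu\circ(\mu'_k \times (-\id)\circ(-))$ and the $H$-space identities. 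This is routine once the earlier theorem is in hand. Everything else follows formally from the cofibration property of the fat wedge and the universal property of the quotient, so the corollary is essentially a restatement of Theorem~\ref{thm:smash_operations-K(pi,1)} in terms of the factorization through the smash product, exactly as was carried out for the two-factor case in the corollary following Theorem~\ref{thm:smash_operations-two}.
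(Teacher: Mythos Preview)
Your proposal is correct and follows exactly the approach the paper intends: the corollary is stated without proof because the deduction from Theorem~\ref{thm:smash_operations-K(pi,1)} was already explained in the Preparation subsection, where it is noted that $S^l\wedge Y^{\wedge k}= S^l \times Y^k/\mr{FW}(S^l\times Y^k)$ and hence $\theta_\alpha\simeq\mu'_k$ on the fat wedge forces $\mu'_k-\theta_\alpha$ to factor through the smash product. Your care with the $H$-space bookkeeping and the cofibration/HEP step simply makes explicit what the paper leaves implicit.
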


For $\alpha\in \mc{Z}_k [S^l, \ms{C}]$, let $\theta_{\alpha,-}= \theta_{\alpha} \circ (\id_{S^1} \times (-\id)^k)$.

\begin{cor}
   $\mu'_k+ \theta_{\alpha,-}$ and $\theta_{\alpha}+ \theta_{\alpha,-}$ both factors through $S^1 \wedge Y_0^{\wedge k}$,
  $$
  \begin{diagram}
    S^1 \times Y_0^k  &\rTo^{\mu'_k+ \theta_{\alpha,-}}  &Y_0 \\
    \dTo  &\ruDashto_{\bar{\theta}_{\alpha,-}} \\
    S^1 \wedge Y_0^{\wedge k},
  \end{diagram} \qquad
  \begin{diagram}
    S^1 \times Y_0^k  &\rTo^{\theta_{\alpha}+ \theta_{\alpha,-}}  &Y_0 \\
    \dTo  &\ruDashto_{\bar{\theta}_{\alpha} + \bar{\theta}_{\alpha,-}} \\
    S^1 \wedge Y_0^{\wedge k}.
  \end{diagram}$$
\end{cor}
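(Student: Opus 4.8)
The plan is to deduce both factorizations from Theorem~\ref{thm:smash_operations-K(pi,1)} together with the preceding corollary (that $\mu'_k - \theta_\alpha$ factors through $S^1 \wedge Y_0^{\wedge k}$ via $\bar\theta_\alpha$), using only that $Y_0$ is a group-like, homotopy-associative and homotopy-commutative $H$-space under $\mu_2 = \theta_{e_2}$. Homotopy commutativity is available here because $\ms{C}$ is path-connected, so $\ms{C}(2)$ is path-connected, whence $e_2 \sim \tau e_2$ and $\theta_{e_2}$ is homotopy commutative. The organizing observation is that a map $g: S^1 \times Y_0^k \to Y_0$ factors (up to homotopy) through $S^1 \wedge Y_0^{\wedge k} = S^1 \times Y_0^k / \mr{FW}(S^1 \times Y_0^k)$ exactly when $g$ is basepoint-preservingly null-homotopic on the fat wedge: by the homotopy extension property such a null-homotopy replaces $g$ by a homotopic map that is strictly trivial on $\mr{FW}(S^1 \times Y_0^k)$, which then descends. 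Thus both claims reduce to showing the two maps are null-homotopic on the fat wedge, followed by identifying the descended maps.

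First I would treat $\mu'_k + \theta_{\alpha,-}$. Since $-\id: Y_0 \to Y_0$ is a pointed homotopy inverse, $\id_{S^1} \times (-\id)^k$ carries $\mr{FW}(S^1 \times Y_0^k)$ into itself and descends to $\id_{S^1} \wedge (-\id)^{\wedge k}$ on the smash. Precomposing the fat-wedge homotopy $\theta_\alpha \simeq \mu'_k$ of Theorem~\ref{thm:smash_operations-K(pi,1)} with $\id_{S^1} \times (-\id)^k$ gives $\theta_{\alpha,-} \simeq \mu'_k \circ (\id_{S^1} \times (-\id)^k)$ on $\mr{FW}(S^1 \times Y_0^k)$. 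Because $\mu'_k$ ignores the $S^1$-coordinate and $Y_0$ is homotopy commutative, $\mu'_k \circ (\id_{S^1} \times (-\id)^k) \simeq (-\id) \circ \mu'_k$, the pointwise inverse $-\mu'_k$. Hence on the fat wedge $\mu'_k + \theta_{\alpha,-} \simeq \mu'_k + (-\mu'_k) \simeq *$ by the group-like property, and so $\mu'_k + \theta_{\alpha,-}$ factors through $S^1 \wedge Y_0^{\wedge k}$. To identify the descended map I would substitute negated inputs into the relation $\theta_\alpha \simeq \mu'_k - \bar\theta_\alpha \circ q$ furnished by the preceding corollary ($q$ the quotient); since $q \circ (\id_{S^1} \times (-\id)^k) = (\id_{S^1} \wedge (-\id)^{\wedge k}) \circ q$, this expresses the descended map through $\bar\theta_\alpha \circ (\id_{S^1} \wedge (-\id)^{\wedge k})$, which is $\bar\theta_{\alpha,-}$.

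For $\theta_\alpha + \theta_{\alpha,-}$ I would exploit that an $H$-space sum of two maps each trivial on the fat wedge is again trivial there, and that the map it descends to is the $H$-space sum of the descended maps. Regrouping by homotopy associativity and commutativity, $\theta_\alpha + \theta_{\alpha,-} \simeq (\theta_\alpha - \mu'_k) + (\mu'_k + \theta_{\alpha,-})$; here $\theta_\alpha - \mu'_k = -(\mu'_k - \theta_\alpha)$ factors through the smash by the preceding corollary, and $\mu'_k + \theta_{\alpha,-}$ does so by the previous paragraph. Therefore $\theta_\alpha + \theta_{\alpha,-}$ factors through $S^1 \wedge Y_0^{\wedge k}$, and assembling the two descended maps (with the sign conventions fixed by the preceding corollary) yields the induced map $\bar\theta_\alpha + \bar\theta_{\alpha,-}$.

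The main obstacle is the bookkeeping of the $H$-space identities on the fat wedge. One must check that every homotopy produced—from Theorem~\ref{thm:smash_operations-K(pi,1)}, from the group-like inverse, and from homotopy associativity and commutativity—is basepoint-preserving, so that the homotopy extension property genuinely rectifies each map to be strictly constant on $\mr{FW}(S^1 \times Y_0^k)$ and the descent to $S^1 \wedge Y_0^{\wedge k}$ is legitimate. The decisive input is homotopy commutativity, which is what collapses $\mu'_k \circ (\id_{S^1} \times (-\id)^k)$ to $-\mu'_k$; without it the fat-wedge restriction would be a commutator rather than null, and the factorization would fail. Tracking the precise signs in the identification of $\bar\theta_{\alpha,-}$ and of $\bar\theta_\alpha + \bar\theta_{\alpha,-}$ is then routine once the factorizations are established.
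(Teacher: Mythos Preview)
Your proposal is correct and is precisely the argument the paper leaves implicit: the corollary is stated without proof, and the intended deduction is exactly to precompose the fat-wedge homotopy $\theta_\alpha\simeq\mu'_k$ from Theorem~\ref{thm:smash_operations-K(pi,1)} with $\id_{S^1}\times(-\id)^k$, use homotopy commutativity of $Y_0$ to identify $\mu'_k\circ(\id\times(-\id)^k)$ with $-\mu'_k$, and then add. Your remark that homotopy commutativity is the decisive input (and is available since $\ms{C}(2)$ is path-connected) is the only nonformal point, and the paper's subsequent line ``Clearly $\bar\theta_{\alpha,-}=\bar\theta_\alpha\circ(\id_{S^1}\wedge(-\id)^{\wedge k})$'' confirms that this precomposition argument is what was intended.
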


Clearly $\bar{\theta}_{\alpha,-}= \bar{\theta}_{\alpha} \circ (\id_{S^1} \wedge (-\id)^{\wedge k})$.

It is natural to ask if the result can be extended to include other path-connected components, namely, $\theta_{\alpha} \simeq \mu'_k: \mr{FW}(S^1\times Y^k) \to Y_0$. However this is unknown yet.

\subsection{Relation with the Samelson Product}
Smash operations constructed above may be thought of as general analogues of the Samelson product $[-,-]: \om X \wedge \om X \to \om X$, and the simplest case probably coincide the Samelson product.

Recall that $\ms{C}_n(2)$ ($n\geq 2$) is $S_2$-equivariantly homotopy equivalent to $S^{n-1}$. Thus
$$\mc{Z}_2 [S^{n-1}, \ms{C}_n]= [S^{n-1}, \ms{C}_n(2)]= [S^{n-1}, S^{n-1}]= \pi_{n-1} S^{n-1}= \Z.$$
Let $\iota_{n-1}\in \mc{Z}_2 [S^{n-1}, \ms{C}_n]$ corresponding to $[\id_{S^{n-1}}] \in \pi_{n-1} S^{n-1}$. For convenience, we shall abbreviate $\iota_{n-1}$ to $\iota$. By Theorem \ref{thm:smash_operations-two}, we have the following smash operation
$$\bar{\theta}_{\iota}: S^{n-1} \wedge \om^n X \wedge \om^n X\to \om^n X.$$
We next discuss its relation with the Samelson product.

For $n\geq 2$, let $\phi_n$ be the Samelson product of the canonical inclusion $\si^{n-1}X \into \lsi (\si^{n-1}X)$ with itself \cite{Neisendorfer:2010:AMUHT}, i.e.
$$\phi_n: \si^{n-1} X \wedge \si^{n-1} X \into \lsi (\si^{n-1} X) \wedge \lsi (\si^{n-1} X) \xra{[-,-]} \lsi (\si^{n-1} X),$$
and
\begin{align*}
  \psi_n: &\ \si^{n-1} X \wedge \si^{n-1} X= \si^{2n-2} X\wedge X \into \si^{2n-2} (\lsn X \wedge \lsn X) \\
  \to &\ \si^{n-1} (S^{n-1} \wedge \lsn X\wedge \lsn X) \xra{\si^{n-1} \bar{\theta}_{\iota}} \si^{n-1} \lsn X \xra{\mr{ev}} \lsi (\si^{n-1} X)
\end{align*}
where the last map $\mr{ev}$ is the evaluation map.

\begin{prop}
  $\phi_n$ and $\psi_n$ induce the same homomorphism in homology.
\end{prop}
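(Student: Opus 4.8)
The plan is to compute both induced maps on homology in terms of known homology operations and check they agree. Since both $\phi_n$ and $\psi_n$ have the form $\Sigma^{n-1}X \wedge \Sigma^{n-1}X \to \Omega\Sigma(\Sigma^{n-1}X)$, and the target has free homology generated by products of the fundamental classes (James splitting), it suffices to evaluate on the canonical generator. Write $H_*(\Sigma^{n-1}X) = \widetilde{H}_{*-(n-1)}(X) =: V$ shifted up, and recall that $H_*\Omega\Sigma(\Sigma^{n-1}X)$ is the tensor algebra $T(V)$ with Samelson (commutator) product corresponding to Lie brackets. So $(\phi_n)_*$ sends $x \otimes y \in V \otimes V$ (thought of as living in $\widetilde{H}_*(\Sigma^{n-1}X \wedge \Sigma^{n-1}X)$) to the graded commutator $[x,y] = xy - (-1)^{|x||y|} yx$ in $T(V)$. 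This is the standard computation of the Samelson product in homology and I would cite it (e.g.\ via \cite{Neisendorfer:2010:AMUHT}).

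For $\psi_n$, I would unwind the definition one arrow at a time. The key input is the Proposition computing $(\bar\theta_\iota)_*$ on primitives: for primitive classes $u_1, u_2 \in H_*\Omega^n X$ we have $(\bar\theta_\iota)_*(\iota \otimes u_1 \otimes u_2) = \theta_*(\bar\iota \otimes u_1 \otimes u_2)$, and by the Proposition on $\theta_*(a_{ij} \otimes \cdots)$ together with the definition of the Browder operation $\lambda_{n-1}$, this equals $\lambda_{n-1}(u_1,u_2)$ (up to sign). Now in the composite defining $\psi_n$, the inclusion $\Sigma^{2n-2}(X \wedge X) \hookrightarrow \Sigma^{2n-2}(\Omega^n\Sigma^n X \wedge \Omega^n\Sigma^n X)$ (via the adjunction unit $X \to \Omega^n\Sigma^n X$, which is a homology monomorphism onto the primitives for connected $X$ in the relevant range) carries $x \otimes y$ to $\sigma^n x \otimes \sigma^n y$ where $\sigma^n\colon \widetilde H_*(X) \hookrightarrow H_{*+n}(\Omega^n\Sigma^n X)$ lands in primitives. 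Applying $\Sigma^{n-1}\bar\theta_\iota$ then produces (a suspension of) $\lambda_{n-1}(\sigma^n x, \sigma^n y) \in H_*\Omega^n\Sigma^n X$. Finally I would use the standard fact that the evaluation $\mathrm{ev}\colon \Sigma^{n-1}\Omega^n\Sigma^n X \to \Omega\Sigma\Sigma^{n-1}X = \Omega\Sigma^n X$, on homology, sends the suspension of the Browder operation $\lambda_{n-1}$ on $H_*\Omega^n\Sigma^n X$ to the Samelson/commutator bracket on $H_*\Omega\Sigma^n X$ — this is precisely the classical compatibility between the Browder bracket at level $n$ and the Samelson bracket after looping down $n-1$ times, i.e.\ $\lambda_{n-1}$ suspends to $[\,,\,]_1$ in single loop space homology. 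Chasing through, $(\psi_n)_*(x \otimes y)$ becomes $[\sigma x, \sigma y] = \sigma x\,\sigma y - (-1)^{|x||y|}\sigma y\,\sigma x$ in $T(V)$, the same class as $(\phi_n)_*(x\otimes y)$.

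Assembling: both induced maps $V \otimes V \to T(V)$ are the graded commutator, hence equal, and since this determines maps out of $\widetilde H_*(\Sigma^{n-1}X \wedge \Sigma^{n-1}X)$ completely, $(\phi_n)_* = (\psi_n)_*$. I would organize the writeup as: (1) identify $H_*$ of source and target and reduce to evaluating on the generating tensor; (2) quote the homology Samelson product formula for $\phi_n$; (3) trace $\psi_n$ through its four constituent maps using the $(\bar\theta_\iota)_*$-on-primitives Proposition and the Browder-to-Samelson suspension formula; (4) compare.

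The main obstacle I anticipate is step (3), specifically bookkeeping the signs and the precise identification of the evaluation map on homology — one must be careful that the suspension coordinates and the adjunction isomorphisms are normalized consistently, and that the "up to sign" ambiguities in the Proposition on $\theta_*(a_{ij}\otimes\cdots)$ and in the definition of $\lambda_{n-1}$ are the \emph{same} sign that appears in the classical homology formula for the Samelson product. Since the statement only claims agreement in homology (not a homotopy), I can afford to be slightly cavalier: if the signs were to differ, one would at worst get $(\psi_n)_* = \pm(\phi_n)_*$, but the standard normalizations (chosen precisely so that $\lambda_{n-1}$ desuspends to the Samelson bracket) make them match on the nose. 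A secondary minor point is ensuring $X$ is connected (or suitably highly connected) so that $\sigma^n$ is injective onto primitives in the relevant degrees; this is implicit in the setup and I would state it as a standing hypothesis.
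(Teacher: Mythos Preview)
Your proposal is correct and follows essentially the same approach as the paper: compute $(\phi_n)_*$ via the standard homology formula for the Samelson product (the paper cites Neisendorfer, Lemma 6.3.7), then trace $(\psi_n)_*$ through the composite using the relation between $(\bar\theta_\iota)_*$ and the Browder operation $\lambda_{n-1}$, and finally identify the result in $T(\widetilde H_*\Sigma^{n-1}X)$ via the suspension/evaluation. The only substantive difference is in the last step: where you invoke a ``classical compatibility'' sending $\lambda_{n-1}$ to the Samelson bracket under evaluation, the paper makes this precise by applying Browder's Theorem~2 (from \cite{Browder:1960:HOLS}) iteratively $n-1$ times to compute $\mathrm{ev}_*(\iota \otimes \theta_*(\iota \otimes a \otimes b))$ directly; you should cite this explicitly rather than leave it as folklore, since it is exactly the content you need and handles the sign bookkeeping you flag as the main obstacle.
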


Before giving the proof, recall the suspension isomorphism
$$\sigma: \wt{H}_kX \to H_{k+1} (\si^{n-1} X) \cong H_{n-1} S^{n-1} \otimes \wt{H}_kX, \quad a\mapsto \sigma a= \iota \otimes a$$
and the suspension homomorphism
$$\sigma: \wt{H}_* (\om X) \xra{\sigma} H_{*+1} (\si \om X) \xra{\mr{ev}_*} H_{*+1} X.$$
Let $i: X\into \lsi X$ be the canonical inclusion. In the following diagram
$$\xymatrix{
  H_* (\om \si X) \ar[r]^-{\sigma} & H_{*+1} (\si \om \si X) \ar@<0.5ex>@{-->}[d]^{\mr{ev}_*}  \\
  \wt{H}_* X \ar@{^(->}[u]^{i_*} \ar[r]^-{\sigma} & H_{*+1} (\si X) \ar@<0.5ex>[u]^{(\si i)_*}  }$$
$\sigma \circ i_*= (\si i)_* \circ \sigma$ by the naturality of $\sigma$ and $\mr{ev}_* \circ \sigma \circ i_*= \sigma$ since the evaluation map $\mr{ev}: \si \om \si X\to \si X$ has a section $\si i: \si X \into \si \om \si X$.

\begin{proof}
  Since the diagonal map $\Delta: \si^{n-1} X\to \si^{n-1} X \times \si^{n-1} X$ is homotopic to the composite $\si^{n-1} X\to \si^{n-1} X \vee \si^{n-1} X \into \si^{n-1} X \times \si^{n-1} X$, $\Delta_*a= a\otimes 1+ 1\otimes a$ for $a\in \wt{H}_* (\si^{n-1} X)$, namely all elements of $\wt{H}_* (\si^{n-1} X)$ ($n\geq 2$) are primitive. According to Lemma 6.3.7 in \cite{Neisendorfer:2010:AMUHT},
  $$(\phi_n)_*: \wt{H}_* (\si^{n-1} X) \otimes \wt{H}_* (\si^{n-1} X) \to H_* (\lsi (\si^{n-1} X)) \cong T \wt{H}_* (\si^{n-1} X),$$
  $$(\phi_n)_* (\sigma a \otimes \sigma b)= \sigma a \cdot \sigma b- (-1)^{|\sigma a||\sigma b|} \sigma b\cdot \sigma a.$$
  On the other hand,
  $$(\psi_n)_*: \wt{H}_* (\si^{n-1} X) \otimes \wt{H}_* (\si^{n-1} X) \to H_* (\lsi (\si^{n-1} X)) \cong T \wt{H}_* (\si^{n-1} X),$$
  \begin{align*}
    (\psi_n)_* (\sigma a \otimes \sigma b) &= (\psi_n)_* (\iota \otimes a \otimes \iota \otimes b)= (\psi_n)_* ((-1)^{|\iota||a|} \iota \otimes \iota \otimes a\otimes b) \\
    &= (-1)^{(n-1)|a|} \mr{ev}_* (\iota \otimes \theta_* (\iota \otimes a\otimes b)) \\
    &= \sigma a \cdot \sigma b- (-1)^{|\sigma a||\sigma b|} \sigma b\cdot \sigma a,
  \end{align*}
  where the last step is obtained by applying Theorem 2 in \cite{Browder:1960:HOLS} $n-1$ times.
\end{proof}

It is then natural to make the following conjecture.

\begin{conj}
  $\phi_n \simeq \psi_n$.
\end{conj}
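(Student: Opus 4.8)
The plan is to pass to adjoints, recognise $\phi_n$ as a Whitehead product, and then reduce the remaining comparison to a universal example.

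Because $\lsi(\si^{n-1}X)= \om\si^n X$ is a loop space, the loop--suspension adjunction identifies the set of based homotopy classes $[\si^{n-1}X\wedge\si^{n-1}X,\ \lsi(\si^{n-1}X)]$ with $[\si^{2n-1}(X\wedge X),\ \si^n X]$, so it suffices to prove that the adjoints $\hat\phi_n,\hat\psi_n\colon \si^{2n-1}(X\wedge X)\to\si^n X$ are homotopic. For $\hat\phi_n$ this is classical: by the standard passage between the Samelson and Whitehead products (Lemma~6.3.7 of \cite{Neisendorfer:2010:AMUHT}, already used above), the adjoint of $\phi_n=[i,i]$ is, up to sign, the generalized Whitehead product $[\iota_n,\iota_n]$ of the identity $\iota_n=\id_{\si^n X}$ with itself, regarded as a map $\si^{n-1}X*\si^{n-1}X=\si^{2n-1}(X\wedge X)\to\si^n X$. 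On the other side, inspecting the definition of $\psi_n$ one sees that its last two stages---namely $\si^{n-1}\bar\theta_{\iota}$ followed by the iterated evaluation $\si^{n-1}\lsn X\to\lsi(\si^{n-1}X)$---constitute precisely the $(n{-}1)$-fold adjunction over $\si^{n-1}\dashv\om^{n-1}$. Hence $\psi_n$ is the full adjoint of the single map
$$g:=\bar\theta_{\iota}\circ(S^{n-1}\wedge i\wedge i)\colon\ S^{n-1}\wedge X\wedge X\ \lra\ \lsn X,$$
and therefore $\hat\psi_n=\epsilon\circ\si^n g$, where $\epsilon\colon\si^n\lsn X\to\si^n X$ is the counit of $\si^n\dashv\om^n$. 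The conjecture thus reduces to the identity $\epsilon\circ\si^n g\simeq\pm[\iota_n,\iota_n]$.

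The substance of this identity is that the simplest little-$n$-cubes smash operation $\bar\theta_{\iota}$ is, on the canonical classes, the \emph{space-level} Browder bracket, and that the Browder bracket on the $n$-fold loop space $\lsn X=\om^n\si^n X$, read off through the evaluation map, is the $n$-fold loop adjoint of the Whitehead product on $\si^n X$. Its homological shadow is exactly the Proposition proved above together with Browder's theorem \cite{Browder:1960:HOLS}: these guarantee that $\hat\phi_n$ and $\hat\psi_n$ induce the same map in homology. To upgrade this to a homotopy I would exploit naturality in $X$. Both $g$ and the $n$-fold loop adjoint $w_n$ of $[\iota_n,\iota_n]$ are natural transformations $S^{n-1}\wedge(-)\wedge(-)\to\lsn(-)$---$w_n$ since Whitehead products are natural, $g$ since $\bar\theta_{\iota}$ is natural with respect to $\ms{C}_n$-maps and $i$ is natural---and I would reduce their comparison to the universal instance $X=S^0$. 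There $S^{n-1}\wedge X\wedge X=S^{n-1}$, $\lsn X=\om^n S^n$, and $g,w_n$ become elements of $\pi_{n-1}\om^n S^n=\pi_{2n-1}S^n$: $w_n$ is literally the Whitehead square $[\iota_n,\iota_n]$, while $g$ is the value of the Browder operation on the fundamental class, so the two coincide by Browder's theorem; bootstrapping back along the naturality reduction would then give $g\simeq\pm w_n$, hence $\phi_n\simeq\psi_n$. An alternative, more hands-on route would bypass universality and construct the homotopy $\epsilon\circ\si^n g\simeq\pm[\iota_n,\iota_n]$ directly, by comparing the little-$n$-cubes model of $\om^n\si^n X$---on which $\bar\theta_{\iota}$ is built from a rotating configuration of two little cubes representing a generator of $\pi_{n-1}\ms{C}_n(2)$---with the James model of $\om\si^n X$.

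The hard part is precisely the step that the Proposition cannot see. Writing $m$ for the connectivity of $\si^n X$, the source $\si^{2n-1}(X\wedge X)$ is exactly $2m$-connected, so $\hat\phi_n$ and $\hat\psi_n$ are maps out of a $2m$-connected complex into an $m$-connected one: they lie in the metastable range, where a single suspension already makes them equal (this is the content of the Proposition) but the unstable discrepancy---the Whitehead square versus the Browder bracket, an EHP-type obstruction living in $\pi_{2m+1}$---is exactly what must be killed. Making the naturality-plus-universality reduction rigorous therefore forces one to control precisely this one invariant: one must verify that a natural transformation $S^{n-1}\wedge(-)\wedge(-)\to\lsn(-)$ which agrees in homology for every $X$ is already determined by its value on $S^0$ together with its metastable Hopf invariant there---which in turn comes back to Browder's theorem in the quadratic construction. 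In the hands-on route the corresponding delicate point is to match, at the point-set level and across the two models, the ``twist'' contributed by the generator of $\pi_{n-1}\ms{C}_n(2)$ with $\si^n$ of the Whitehead product on $\si^n X$; this is explicit but requires substantial bookkeeping.
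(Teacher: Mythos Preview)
The statement you are attempting to prove is stated in the paper as a \emph{conjecture}, not a theorem: the paper does not prove it. The only evidence the paper offers is the preceding Proposition showing that $\phi_n$ and $\psi_n$ agree in homology; the conjecture is then posed as the natural strengthening, and left open. So there is no ``paper's own proof'' to compare against.

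As for your strategy itself, the adjunction reduction to $\hat\phi_n,\hat\psi_n\colon \si^{2n-1}(X\wedge X)\to\si^n X$ and the identification of $\hat\phi_n$ with a generalized Whitehead product are fine, and your reading of $\psi_n$ as the adjoint of $g=\bar\theta_\iota\circ(S^{n-1}\wedge i\wedge i)$ is correct. The genuine gap is the ``universal example'' step. Agreement of two natural transformations $S^{n-1}\wedge(-)\wedge(-)\Rightarrow\lsn(-)$ at $X=S^0$ does \emph{not} force agreement at all $X$: the source functor is quadratic in $X$, not corepresented by $S^0$, so there is no Yoneda-type principle available, and your own observation that the maps sit exactly at the edge of the metastable range shows that the homological agreement you already have cannot be upgraded for free. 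You acknowledge this (``one must verify that a natural transformation \ldots\ is already determined by its value on $S^0$ together with its metastable Hopf invariant''), but that verification is the whole content of the conjecture, and nothing in Browder's theorem supplies it---Browder works in homology, which is precisely the information the Proposition has already extracted. Likewise, your alternative ``hands-on'' route via explicit models is a plausible programme, but as you say the bookkeeping is substantial and no actual homotopy is constructed. In short, your write-up is a reasonable outline of why the conjecture is believable and where the difficulty lies, but it is not a proof, and the paper does not claim one either.
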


It was first guessed that $\bar{\theta}_{\iota}$ should be related to the Samelson product right after observing the following similarity. We shall only consider $\lsii$ but the discussion also applies to $\lsn$. The Samelson product induces
\begin{align*}
  S^1 \wedge (\om^2 \si^2 X)^{\wedge 2} & \into \om^2 \si^2 (S^1 \wedge (\om^2 \si^2 X)^{\wedge 2}) \to \om^2 (S^1 \wedge \si^2 (\om^2 \si^2 X)^{\wedge 2}) \\
  & \to \om^2 (S^1 \wedge \si^2 X \wedge \om^2 \si^2 X) \to \om^2 (S^1 \wedge X \wedge \si^2 X) \\
  & \to \om^2 (S^1 \wedge \si X \wedge \si X) \to \om \om \si (\si X \wedge \si X) \\
  & \xra{\om \bar{\phi_2}} \om \om \si \si X= \om^2 \si^2 X
\end{align*}
where $\bar{\phi_2}: \lsi (\si X\wedge \si X) \to \lsi (\si X)$ is the extension of $\phi_2: \si X\wedge \si X \to \lsi (\si X)$. This map has the same domain and codomain of
$$\bar{\theta}_{\iota}: S^1 \wedge (\om^2 \si^2 X)^{\wedge 2} \to \om^2 \si^2 X.$$
If $\phi_2\simeq \psi_2$, then the above two maps are homotopic as well. This similarity can also be observed from that the loop of $\bar{\phi}_2$ is
$$\om^2 \si^3 (X \wedge X)= \om \om \si (\si X \wedge \si X) \xra{\om \bar{\phi_2}} \om \om \si \si X= \om^2 \si^2 X$$
while $\bar{\theta}_{\iota}$ induces
\begin{align*}
  \om^2 \si^3 (X \wedge X) & \into  \om^2 \si^3 (\om^2 \si^2 X\wedge \om^2 \si^2 X) \to \om^2 \si^2 (S^1 \wedge \om^2 \si^2 X\wedge \om^2 \si^2 X) \\
  & \xra{\lsii \bar{\theta}_{\iota}} \om^2 \si^2 (\om^2 \si^2 X) \xra{\om^2 \mr{ev}} \om^2 \si^2 X
\end{align*}
which also have the same domain and codomain. By taking iteration, we can obtain, from the Samelson product and $\bar{\theta}_{\iota}$, two maps $\om^2 \si^{k+1} X^{\wedge k} \to \lsii X$ for $k\geq 2$ which would be homotopic if $\phi_2\simeq \psi_2$.

Moreover, the maps $\om^2 \si^{k+1} X^{\wedge k} \to \lsii X$ ($k\geq 2$) look interesting, as $\om^2 \si^k X^{\wedge k}$ is the target space of $\om h_k$ where $h_k$ is the $k$th James-Hopf invariant \cite{Cohen:preprint:CGTHTI, Cohen:1995:CGTH}. Recall \cite{Cohen:preprint:CGTHTI, Cohen:1995:CGTH} that an important family of self-maps of $\lsi X$ are the following composites
$$\lsi X \xra{h_k} \lsi X^{\wedge k} \xra{\textrm{iteration of Samelson product}} \lsi X.$$
This together with the viewpoint that smash operations are general analogues of the Samelson product suggest that there might exist maps
$$\lsii X \to \lsii (S^l \wedge X^{\wedge k})$$
which would be 2-dimensional analogues of the James-Hopf invariants and that a family of self-maps of $\lsii X$ may be of the following form
$$\lsii X \xra{?} \lsii (S^l \wedge X^{\wedge k}) \xra{\textrm{(mixed) iteration of smash operations}} \lsii X$$
where the first part can be loop of the James-Hopf invariants and could be 2-dimensional analogues of the James-Hopf invariants. For instance, iteration of smash operations
$$S^1 \wedge (\lsii X)^{\wedge 2} \to \lsii X$$
gives maps
$$S^{k-1} \wedge (\lsii X)^{\wedge k} \to \lsii X$$
which provide many self-maps of $\lsii X$
$$\lsii X= \om \lsi (\si X) \xra{\om h_k} \om \lsi (\si X)^{\wedge k}= \lsii (S^{k-1} \wedge X^{\wedge k}) \to \lsii X.$$

The above discussion also applies to $\lsn X$. Just like that the Samelson product $\om X\wedge \om X\to \om X$ can induce maps $\lsi X^{\wedge k} \to \lsi X$, mixed iteration of smash operations
$$S^l \wedge (\om^2 X)^{\wedge k} \to \om^2 X$$
can also induce maps for $n>2$
\begin{align*}
  \lsn (S^l \wedge X^{\wedge k}) &\into \lsn (S^l \wedge (\lsii X)^{\wedge k}) \to \lsn (\lsii X) \\
  &\to \om^n \si^{n-2} \si^2 X= \lsn X
\end{align*}
which should also coincide with certain smash operations on $\lsn X$ constructed in the same way of those on $\lsii X$. Then similarly there might exist maps
$$\lsn X \to \lsn (S^l \wedge X^{\wedge k})$$
which would be $n$-dimensional analogues of the James-Hopf invariants and a family of self-maps of $\lsn X$ may be of the following form
$$\lsn X \xra{?} \lsn (S^l \wedge X^{\wedge k}) \xra{\textrm{(mixed) iteration of smash operations}} \lsii X$$
where the first part can be loop of the James-Hopf invariants and could be $n$-dimensional analogues of the James-Hopf invariants.

\section{Applications to Homotopy Groups}
An application of smash operations is that they induce operations on homotopy groups which can be assembled together to give a conceptual description of the structure of homotopy groups.

Generally, any map
$$\phi: X_1 \wedge \cdots \wedge X_k \to X,$$
induces a family of functions on homotopy groups
$$\phi_*: \pi_{m_1}X_1 \times \cdots \times \pi_{m_k}X_k \to \pi_mX, \quad m_i\geq 0,$$
sending $[f_i]\in \pi_{m_i}X_i$, $1\leq i\leq k$, to the homotopy class of the composite
$$S^m= S^{m_1} \wedge \cdots \wedge S^{m_k} \xra{f_1 \wedge \cdots \wedge f_k} X_1 \wedge \cdots \wedge X_k \xra{\phi} X.$$
Therefore, each smash operation $\bar{\theta}_{\alpha}$ (if exists) on $\ms{C}$-spaces $Y$ induces a family of functions on homotopy groups
$$(\bar{\theta}_{\alpha})_*: \pi_lS^l \times \pi_{m_1}Y \times \cdots \times \pi_{m_k}Y\to \pi_mY,$$
where $\alpha\in \mc{Z}_k [S^l, \ms{C}]$, which are natural with respect to $\ms{C}$-maps. Thus call them operations on homotopy groups.

$(\bar{\theta}_{\iota})_*$ where $\iota$ is the fundamental class of $\pi_{n-1}\ms{C}_n(2)$, is related to the Whitehead product $[-,-]$. Let $\om^n_0 X$ denote the path-connected component of the basepoint of $\om^n X$.

\subsection{Smash Product on Homotopy Groups}
Let $X_1,\ldots,X_k$ ($k\geq 2$) be pointed spaces. Define the smash product on homotopy groups
\begin{align*}
  \wedge: \pi_{m_1}X_1 \times \cdots \times \pi_{m_k}X_k & \to \pi_{m} (X_1\wedge \cdots \wedge X_k) \\
  ([f_1], \ldots, [f_k]) & \mapsto [f_1 \wedge \cdots \wedge f_k],
\end{align*}
where $m_i\geq 0$.

The notions of bilinear homomorphism and multilinear homomorphism can be extended to general monoids and groups (unnecessary abelian). For nonabelian monoids and groups, the product is still denoted by $+$.

\begin{lem}
  The smash product $\pi_{m_1}X_1 \times \cdots \times \pi_{m_k}X_k \to \pi_{m} (X_1\wedge \cdots \wedge X_k)$ is linear on the $i$th factor if $m_i\geq 1$.
\end{lem}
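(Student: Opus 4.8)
The plan is to reduce linearity on the $i$-th factor --- i.e.\ that for fixed $[f_j]$ ($j\neq i$) the assignment $[f_i]\mapsto [f_1\wedge\cdots\wedge f_i\wedge\cdots\wedge f_k]$ is a group homomorphism $\pi_{m_i}X_i\to \pi_m(X_1\wedge\cdots\wedge X_k)$ --- to the standard description of the group operation on a homotopy group $\pi_mZ$ ($m\geq 1$) by a pinch map, computed in the $i$-th suspension coordinate. Fix $[f_j]\in\pi_{m_j}X_j$ for $j\neq i$ and let $[f_i],[f_i']\in\pi_{m_i}X_i$ with $m_i\geq 1$; set $g=f_1\wedge\cdots\wedge f_i\wedge\cdots\wedge f_k$ and $g'=f_1\wedge\cdots\wedge f_i'\wedge\cdots\wedge f_k$, so that the two terms to be compared are $[g],[g']\in\pi_m(X_1\wedge\cdots\wedge X_k)$. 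Since $m_i\geq 1$ we may write $S^{m_i}=\Sigma S^{m_i-1}$ and take the corresponding pinch map $\nu\colon S^{m_i}\to S^{m_i}\vee S^{m_i}$; then $[f_i]+[f_i']$ is by definition represented by $S^{m_i}\xra{\nu}S^{m_i}\vee S^{m_i}\to X_i$, the last map being $f_i$ on the first summand and $f_i'$ on the second. Smashing $\nu$ with the identities of the remaining spheres gives
$$\nu_i\colon S^m=S^{m_1}\wedge\cdots\wedge S^{m_i}\wedge\cdots\wedge S^{m_k}\xra{\id\wedge\cdots\wedge\nu\wedge\cdots\wedge\id}S^{m_1}\wedge\cdots\wedge(S^{m_i}\vee S^{m_i})\wedge\cdots\wedge S^{m_k},$$
and pulling the suspension coordinate of $S^{m_i}$ out of the smash identifies $\nu_i$, up to the canonical homeomorphism $S^m\cong\Sigma S^{m-1}$, with a pinch map of $S^m$; hence $\nu_i$ also computes the group operation on $\pi_m(X_1\wedge\cdots\wedge X_k)$.

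Next I would run the diagram chase. Since the smash product of pointed compactly generated spaces distributes over wedge sums --- $A\wedge(B\vee C)\wedge D\cong(A\wedge B\wedge D)\vee(A\wedge C\wedge D)$, naturally --- there is a canonical homeomorphism $S^{m_1}\wedge\cdots\wedge(S^{m_i}\vee S^{m_i})\wedge\cdots\wedge S^{m_k}\cong S^m\vee S^m$ carrying $\nu_i$ to the standard pinch $S^m\to S^m\vee S^m$, and under it the smash $f_1\wedge\cdots\wedge(f_i,f_i')\wedge\cdots\wedge f_k$ --- a map into the \emph{single} space $X_1\wedge\cdots\wedge X_k$, since $(f_i,f_i')\colon S^{m_i}\vee S^{m_i}\to X_i$ --- becomes the map $S^m\vee S^m\to X_1\wedge\cdots\wedge X_k$ that is $g$ on the first wedge summand and $g'$ on the second. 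Therefore $f_1\wedge\cdots\wedge(f_i+f_i')\wedge\cdots\wedge f_k$ is, up to pointed homotopy, the composite
$$S^m\xra{\nu_i}S^m\vee S^m\xra{g\vee g'}(X_1\wedge\cdots\wedge X_k)\vee(X_1\wedge\cdots\wedge X_k)\xra{\nabla}X_1\wedge\cdots\wedge X_k,$$
which is exactly $[g]+[g']$ in $\pi_m(X_1\wedge\cdots\wedge X_k)$. Pointedness of the $f_j$ is what legitimizes the wedge-to-smash collapses used throughout.

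The one delicate point --- the \textbf{main obstacle}, such as it is --- is to know that the group operation on $\pi_m(X_1\wedge\cdots\wedge X_k)$ induced by pinching the $i$-th suspension coordinate is the usual one. For $m\geq 2$ this is the classical fact that the several comultiplications of $S^m$ coming from its various suspension coordinates all induce the same (necessarily abelian) group structure on $[S^m,-]$; for $m=1$ one necessarily has $m_i=1$ and $m_j=0$ for all $j\neq i$, so the $i$-th coordinate is literally the only circle factor and $\nu_i$ is simply the standard pinch of $S^1$, so no comparison is needed. Once the distributivity isomorphism of $\wedge$ over $\vee$ is recorded and applied coordinatewise, the rest is a routine chase; the hypothesis $m_i\geq 1$ enters precisely in the first step, to make a pinch map on $S^{m_i}$ available.
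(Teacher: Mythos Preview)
Your proof is correct and follows essentially the same approach as the paper: both use the pinch map on $S^{m_i}$ together with the distributivity isomorphism $A\wedge(B\vee B')\wedge C\cong (A\wedge B\wedge C)\vee(A\wedge B'\wedge C)$ to identify the smash of $(f_i+f_i')$ with the remaining $f_j$ as the sum $[g]+[g']$. The only presentational differences are that the paper first does the case $k=2$ and then inducts via the factorization $\wedge_k=\wedge_2\circ(\wedge_{k-1}\times\id)$, whereas you handle general $k$ and general $i$ in one pass, and you are more explicit than the paper about why pinching in the $i$-th suspension coordinate computes the standard group operation on $\pi_m$.
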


\begin{proof}
  Consider the case $k=2$ first. Let $[f_1], [f_1']\in \pi_iX_1$, $[f_2]\in \pi_jX_2$ and $i\geq 1$, then $([f_1]+ [f_1']) \wedge [f_2]$ is represented by
  $$S^{i+j}= S^i\wedge S^j\to (S^i\vee S^i)\wedge S^j \xra{(f_1\vee f_1') \wedge f_2} (X_1 \vee X_1)\wedge X_j \to X_1\wedge X_2$$
  and $[f_1]\wedge [f_2]+ [f_1']\wedge [f_2]$ is represented by
  $$S^{i+j}\to S^{i+j} \vee S^{i+j} \to (S^i\wedge S^j) \vee (S^i \wedge S^j) \xra{(f_1\wedge f_2) \vee (f_1'\wedge f_2)} (X_1\wedge X_2) \vee (X_1\wedge X_2) \to X_1\wedge X_2.$$
  It is clear that the following diagram is commutative
  \begin{diagram}
    S^{i+j}  &\rEq  &S^i\wedge S^j  &\rTo  &(S^i\vee S^i)\wedge S^j  &\rTo^{(f_1\vee f_1') \wedge f_2}  &(X_1 \vee X_1)\wedge X_2  &\rTo  &X_1\wedge X_2 \\
    &\rdTo  &&&\dTo>{\cong}  &&\dTo>{\cong}  &\ruTo \\
    &&S^{i+j} \vee S^{i+j}  &\rTo  &(S^i\wedge S^j)^{\vee 2}  &\rTo^{(f_1\wedge f_2) \vee (f_1'\wedge f_2)} &(X_1\wedge X_2)^{\vee 2}.
  \end{diagram}
  So
  $$([f_1]+ [f_1']) \wedge [f_2]= [f_1]\wedge [f_2]+ [f_1']\wedge [f_2].$$
  Similarly the smash product is linear on the second factor if $j\geq 1$. The cases $k>2$ follows by induction from the following decomposition of the smash product
  $$\pi_{m_1}X_1 \times \cdots \times \pi_{m_k}X_k \to \pi_{m_1+ \cdots+ m_{k-1}} (X_1 \wedge \cdots \wedge X_{k-1}) \times \pi_{m_k}X_k \to \pi_{m} (X_1\wedge \cdots \wedge X_k).$$
\end{proof}

It should be noted that if $m_i=0$, the smash product is generally not linear on the $i$th factor.

\begin{prop}
  For any pointed map $\phi: X_1 \wedge \cdots \wedge X_k \to X$, the composite
  $$\pi_{m_1}X_1 \times \cdots \times \pi_{m_k}X_k \xra{\wedge} \pi_m(X_1\wedge \cdots\wedge X_k) \xra{\phi_*} \pi_mX,$$
  is linear on the $i$th factor if $m_i\geq 1$. It is particularly multilinear if all $m_i\geq 1$. \qed
\end{prop}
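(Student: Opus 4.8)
The statement to prove is the final Proposition: for any pointed map $\phi\colon X_1\wedge\cdots\wedge X_k\to X$, the composite $\phi_*\circ\wedge\colon \pi_{m_1}X_1\times\cdots\times\pi_{m_k}X_k\to\pi_mX$ is linear on the $i$th factor when $m_i\ge 1$, and multilinear when all $m_i\ge 1$.

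The plan is to reduce the statement directly to the preceding Lemma, which asserts exactly that the smash product $\wedge\colon\pi_{m_1}X_1\times\cdots\times\pi_{m_k}X_k\to\pi_m(X_1\wedge\cdots\wedge X_k)$ is linear on the $i$th factor whenever $m_i\ge 1$. The only additional ingredient needed is that post-composition with a fixed pointed map $\phi$ induces a group homomorphism $\phi_*\colon\pi_m(X_1\wedge\cdots\wedge X_k)\to\pi_mX$ for each $m\ge 1$; this is the standard functoriality of homotopy groups, valid since $m\ge 1$ follows from $m_i\ge 1$ (indeed $m=m_1+\cdots+m_k\ge m_i\ge 1$).

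Concretely, first I would fix all factors other than the $i$th at chosen classes $[f_j]\in\pi_{m_j}X_j$, $j\ne i$, and regard both $\wedge$ and $\phi_*\circ\wedge$ as functions of the single variable $[f_i]\in\pi_{m_i}X_i$. By the Lemma, $([f_i]+[f_i'])\wedge\prod_{j\ne i}[f_j]$ equals $[f_i]\wedge\prod_{j\ne i}[f_j] + [f_i']\wedge\prod_{j\ne i}[f_j]$ in $\pi_m(X_1\wedge\cdots\wedge X_k)$ (with the factors in their fixed slots). Applying the homomorphism $\phi_*$ to both sides and using $\phi_*(a+b)=\phi_*(a)+\phi_*(b)$ gives linearity of $\phi_*\circ\wedge$ on the $i$th factor. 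The multilinearity claim is then immediate: if every $m_i\ge 1$, the composite is linear on each factor separately, which is the definition of multilinearity (in the extended sense for possibly nonabelian groups introduced just above the Lemma, where the group operation is still written $+$).

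There is essentially no obstacle here — this is a short formal consequence of the Lemma and functoriality, which is presumably why the paper marks it with \qed and omits the proof. The only point requiring a word of care is that $\pi_m$ is only a group (and $\phi_*$ only a homomorphism) for $m\ge 1$, so one must note that the hypothesis $m_i\ge 1$ forces $m\ge 1$; and, as the remark after the Lemma already flags, if some $m_i=0$ the argument breaks down because $\wedge$ itself fails to be linear on that factor. I would phrase the proof in two or three sentences accordingly.

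\begin{proof}
  Fix $i$ with $m_i\ge 1$ and fix classes $[f_j]\in\pi_{m_j}X_j$ for $j\ne i$. Since $m=m_1+\cdots+m_k\ge m_i\ge 1$, the map $\phi_*\colon\pi_m(X_1\wedge\cdots\wedge X_k)\to\pi_mX$ is a group homomorphism. By the previous Lemma, for $[f_i],[f_i']\in\pi_{m_i}X_i$ the smash product satisfies
  $$\left([f_i]+[f_i']\right)\wedge\textstyle\prod_{j\ne i}[f_j]=[f_i]\wedge\textstyle\prod_{j\ne i}[f_j]+[f_i']\wedge\textstyle\prod_{j\ne i}[f_j]$$
  in $\pi_m(X_1\wedge\cdots\wedge X_k)$, where the product symbol indicates that the remaining entries occupy their fixed slots. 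Applying $\phi_*$ and using that it is a homomorphism yields
  $$\phi_*\!\left(\left([f_i]+[f_i']\right)\wedge\textstyle\prod_{j\ne i}[f_j]\right)=\phi_*\!\left([f_i]\wedge\textstyle\prod_{j\ne i}[f_j]\right)+\phi_*\!\left([f_i']\wedge\textstyle\prod_{j\ne i}[f_j]\right),$$
  which is linearity of $\phi_*\circ\wedge$ on the $i$th factor. If all $m_i\ge 1$ this holds for every $i$, so the composite is multilinear.
\end{proof}
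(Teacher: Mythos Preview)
Your proof is correct and is precisely the argument the paper has in mind: the proposition is marked with \qed\ and no proof because it follows immediately from the preceding Lemma on linearity of $\wedge$ together with the fact that $\phi_*$ is a group homomorphism on $\pi_m$ for $m\ge 1$. Your extra care in noting that $m_i\ge 1$ forces $m\ge 1$ is exactly the one point worth mentioning.
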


Recall $S^0=\{1,-1\}$ with basepoint 1. Choose a generator of $\wt{H}_0S^0= H_0(-1)\subset H_0S^0$ and denote it $\iota_0$. Let $\iota_k$ be the generator of $\wt{H}_kS^k= H_kS^k$ ($k\geq 1$) via the suspension isomorphism $\wt{H}_0S^0 \cong \wt{H}_1S^1\cong \wt{H}_2S^2 \cong \cdots$. Then from $S^i\wedge S^j\cong S^{i+j}$ ($i,j\geq 0$),
$$\wt{H}_iS^i \times \wt{H}_jS^j \xra{\wedge} \wt{H}_{i+j} (S^i \wedge S^j) \xra{\cong} \wt{H}_{i+j} S^{i+j}, \quad (\iota_i, \iota_j) \mapsto \iota_i \wedge \iota_j \mapsto \iota_{i+j}.$$
For $i\geq 0$, the Hurewicz map is
$$h: \pi_iX \to H_iX \quad [f] \mapsto f_*(\iota_i),$$
where $f_*: H_iS^i\to H_iX$ is the induced homomorphism of the pointed map $f: S^i\to X$. Note that $h: \pi_0X \to H_0X= \Z[\pi_0X]$ sends $a\in \pi_0X$ to $a\in \pi_0X \subset \Z[\pi_0X]$ and thus is not a homomorphism! For instance,
$$h: \Z= \pi_0 \om^n S^n \to H_0 \om^n S^n= \Z[\Z]= \Z[\ldots, t^{-2}, t^{-1}, 1, t, t^2, \ldots]$$
sends $i$ to $t^i$ ($n\geq 1$).

\begin{lem}
  The following diagram is commutative for $m_i\geq 0$
  \begin{diagram}
    \pi_{m_1}X_1 \times \cdots \times \pi_{m_k}X_k  &\rTo^{\wedge}  &\pi_m(X_1\wedge \cdots\wedge X_k) \\
    \dTo<{h\times \cdots \times h}  &&\dTo>h \\
    H_{m_1}X_1 \times \cdots \times H_{m_k}X_k  &\rTo^{\wedge}  &H_m(X_1\wedge \cdots\wedge X_k).
  \end{diagram}
\end{lem}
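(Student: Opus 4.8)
The plan is to trace both ways around the square down to pushforwards of fundamental classes, and then invoke naturality of the homology cross product. First I would reduce the statement to a single naturality assertion. For $[f_i]\in\pi_{m_i}X_i$ ($1\le i\le k$), the definitions of the smash product on homotopy groups and of the Hurewicz map give
$$h\bigl([f_1]\wedge\cdots\wedge[f_k]\bigr)=h\bigl([f_1\wedge\cdots\wedge f_k]\bigr)=(f_1\wedge\cdots\wedge f_k)_*(\iota_m),$$
where $f_1\wedge\cdots\wedge f_k\colon S^{m_1}\wedge\cdots\wedge S^{m_k}\to X_1\wedge\cdots\wedge X_k$ and we use the identification $S^m=S^{m_1}\wedge\cdots\wedge S^{m_k}$. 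By the iterated form of the identity $\iota_i\wedge\iota_j=\iota_{i+j}$ recorded just before the statement, under this identification $\iota_m$ is precisely $\iota_{m_1}\wedge\cdots\wedge\iota_{m_k}$, the value of the homology smash product on $(\iota_{m_1},\dots,\iota_{m_k})$. Since also $h([f_i])=(f_i)_*(\iota_{m_i})$, the lemma is equivalent to
$$(f_1\wedge\cdots\wedge f_k)_*\bigl(\iota_{m_1}\wedge\cdots\wedge\iota_{m_k}\bigr)=(f_1)_*(\iota_{m_1})\wedge\cdots\wedge(f_k)_*(\iota_{m_k}),$$
that is, to the naturality of the homology smash product under the maps $f_i$.

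Second, I would establish this naturality. For $k=2$, recall that the homology smash product of $A$ and $B$ is by construction the cross (external) product $H_*(A)\otimes H_*(B)\to H_*(A\times B)$ followed by the homomorphism induced by the quotient $p_{A,B}\colon A\times B\to A\wedge B$. For pointed maps $g_1\colon A\to A'$ and $g_2\colon B\to B'$, the square with top edge $g_1\times g_2$, bottom edge $g_1\wedge g_2$, and vertical edges $p_{A,B}$ and $p_{A',B'}$ commutes strictly. Applying homology and using naturality of the singular cross product, one has $(g_1\times g_2)_*(a\times b)=(g_1)_*(a)\times(g_2)_*(b)$; pushing this along $p_{A',B'}$ and using commutativity of the square yields $(g_1\wedge g_2)_*(a\wedge b)=(g_1)_*(a)\wedge(g_2)_*(b)$. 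The general $k$ then follows by the same induction used in the proof of the preceding linearity lemma, via $X_1\wedge\cdots\wedge X_k=(X_1\wedge\cdots\wedge X_{k-1})\wedge X_k$ and $S^m=(S^{m_1}\wedge\cdots\wedge S^{m_{k-1}})\wedge S^{m_k}$. This gives the naturality assertion of the first step, hence the lemma.

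The closest thing to an obstacle is the bookkeeping when some $m_i=0$: there $H_0$ is unreduced, $\iota_0$ is the class of the non-basepoint of $S^0$, and $h$ is not a homomorphism on $\pi_0$ (as the text stresses). However, the argument above never uses that $h$ is a homomorphism; it uses only $h([f])=f_*(\iota)$ for the appropriate fundamental or point class, together with naturality of the singular homology cross product, which holds over $\Z$ with no K\"unneth hypothesis. The one remaining verification is the $k$-fold identity for fundamental classes when $S^0$-factors occur---for instance $\iota_0\wedge\cdots\wedge\iota_0=\iota_0$ under $S^0\wedge\cdots\wedge S^0\cong S^0$, and more generally $\iota_{m_1}\wedge\cdots\wedge\iota_{m_k}=\iota_m$ with $S^0$-factors mixed among positive-dimensional spheres---but each of these follows at once by iterating the already known case $\iota_i\wedge\iota_j=\iota_{i+j}$. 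With that, the square commutes for all $m_i\ge0$.
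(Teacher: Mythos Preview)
Your proof is correct and follows essentially the same approach as the paper: reduce to $k=2$, identify $\iota_m$ with $\iota_{m_1}\wedge\cdots\wedge\iota_{m_k}$, and obtain $(f\wedge g)_*(\iota_i\wedge\iota_j)=f_*\iota_i\wedge g_*\iota_j$ from naturality of the cross product together with the quotient $X\times Y\to X\wedge Y$. The paper packages this last step into a single commutative diagram with $H_{i+j}(S^i\times S^j)$ and $H_{i+j}(X\times Y)$ as intermediate nodes, while you spell out the cross-product/quotient factorization and the $m_i=0$ bookkeeping more explicitly; but the underlying argument is the same.
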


\begin{proof}
  It suffices to consider the case $k=2$. For $i,j\geq 0$, $[f]\in \pi_iX$ and $[g]\in \pi_jY$, the following diagram is commutative
  \begin{diagram}
    H_iS^i\times H_jS^j  &\rTo  &H_{i+j} (S^i\times S^j)  &\rTo &H_{i+j} (S^i\wedge S^j) \\
    \dTo<{f_*\times g_*}  &&\dTo>{(f\times g)_*}  &&\dTo>{(f\wedge g)_*} \\
    H_iX\times H_jY  &\rTo  &H_{i+j} (X\times Y)  &\rTo &H_{i+j} (X\wedge Y).
  \end{diagram}
  Thus $(f\wedge g)_* (\iota_i \wedge \iota_j)= f_* \iota_i \wedge g_* \iota_j$. Then
  $$h([f]\wedge [g])= h[f\wedge g]= (f\wedge g)_* (\iota_{i+j})= (f\wedge g)_* (\iota_i \wedge \iota_j)= f_* \iota_i \wedge g_* \iota_j= h[f] \wedge h[g];$$
  namely the following diagram
  \begin{diagram}
    \pi_iX \times \pi_j Y  &\rTo^{\wedge}  &\pi_{i+j} (X\wedge Y) \\
    \dTo<{h\times h}  &&\dTo>h \\
    H_iX\times H_jY  &\rTo^{\wedge}  &H_{i+j} (X\wedge Y).
  \end{diagram}
  is commutative.
\end{proof}

\begin{prop}
  For any pointed map $\phi: X_1 \wedge \cdots \wedge X_k\to X$, the following diagram is commutative for $m_i\geq 0$
  \begin{diagram}
    \pi_{m_1}X_1 \times \cdots \times \pi_{m_k}X_k  &\rTo^{\wedge}  &\pi_m(X_1\wedge \cdots\wedge X_k)  &\rTo{\phi_*}  &\pi_mX \\
    \dTo<{h\times \cdots \times h}  &&\dTo>h  &&\dTo>h \\
    H_{m_1}X_1 \times \cdots \times H_{m_k}X_k  &\rTo^{\wedge}  &H_m(X_1\wedge \cdots\wedge X_k)  &\rTo{\phi_*}  &H_mX.
  \end{diagram}
\end{prop}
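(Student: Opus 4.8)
The plan is to obtain the outer rectangle by horizontally pasting two squares, each handled separately. The left-hand square — the one involving only the smash product $\wedge$ (on homotopy and on homology) together with the Hurewicz maps $h$ — is precisely the statement of the preceding Lemma, so I would simply invoke it rather than reprove anything.

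For the right-hand square I would verify $h \circ \phi_* = \phi_* \circ h$ directly from the definitions, as a plain equality of functions $\pi_m(X_1 \wedge \cdots \wedge X_k) \to H_m X$. Given $[g] \in \pi_m(X_1 \wedge \cdots \wedge X_k)$ represented by $g \colon S^m \to X_1 \wedge \cdots \wedge X_k$, one has $h[g] = g_*(\iota_m)$, and hence
$$h(\phi_*[g]) = h[\phi \circ g] = (\phi \circ g)_*(\iota_m) = \phi_*\big(g_*(\iota_m)\big) = \phi_*(h[g])$$
by functoriality of singular homology. Pasting this square onto the right of the Lemma's square yields the asserted commutativity.

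The point to be careful about — and, I expect, the reason this statement is singled out at all — is that none of the argument should rely on $h$ being additive: as emphasized just before the Lemma, $h$ fails to be a homomorphism on $\pi_0$, so the horizontal maps in the diagram must be regarded only as maps of pointed sets. I would therefore phrase everything at the level of functions, where both squares commute for essentially formal reasons. Consequently there is no genuine obstacle here; the work is entirely in keeping the bookkeeping honest about which maps are (and are not) group homomorphisms.
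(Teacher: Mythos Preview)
Your proposal is correct and matches the paper's approach exactly: the paper's proof consists of the single sentence ``Commutativity of the second square follows from the naturality of the Hurewicz homomorphism,'' implicitly relying on the preceding Lemma for the left square. Your explicit unwinding of naturality and your care about $\pi_0$ are more detailed than what the paper records, but the argument is the same.
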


\begin{proof}
  Commutativity of the second square follows from the naturality of the Hurewicz homomorphism.
\end{proof}

\subsection{Induced Operations on Homotopy Groups}
For $\alpha\in \mc{Z}_k [S^l, \ms{C}]$ ($l\geq 1$), if $\theta_{\alpha}\simeq \mu'_k$ restricted to the fat wedge $\mr{FW} (S^l\times Y^k)$, then we have a smash operation $\bar{\theta}_{\alpha}: S^l\wedge Y^{\wedge k}\to Y$ inducing
$$(\bar{\theta}_{\alpha})_*: \pi_{l'}S^l \times \pi_{m_1}Y \times \cdots \times \pi_{m_k}Y\to \pi_{l'+m}Y$$
which is linear on $\pi_{l'}S^l$ for $l'\geq l$ and linear on $\pi_{m_i}Y$ for $m_i\geq 1$. For $[\phi]\in \pi_{l'}S^l$, we have the following commutative diagram
\begin{diagram}
  \pi_{m_1}Y \times \cdots \times \pi_{m_k}Y  &\rTo^{(\bar{\theta}_{\alpha})_* ([\phi];-)} &\pi_{l'+m}Y \\
  &\rdTo_{(\bar{\theta}_{\alpha})_* (\iota;-)}  &\uTo>{\bar{\phi}} \\
  &&\pi_{l+m}Y
\end{diagram}
where $\bar{\phi}: \pi_{l+m}Y\to \pi_{l'+m}Y$, $[f] \mapsto [f\circ (\phi\wedge \id_{S^m})]$, following from the obvious commutative diagram
\begin{diagram}
  S^{l'}\wedge S^{m_1}\wedge \cdots \wedge S^{m_k}  &\rTo  &S^l \wedge Y^{\wedge k}  &\rTo^{\bar{\theta}_{\alpha}}  &Y \\
  \dTo<{\phi\wedge \id}  &\ruTo \\
  S^l\wedge S^{m_1}\wedge \cdots \wedge S^{m_k}.
\end{diagram}
So we shall only consider the case $l'=l$. Since $(\bar{\theta}_{\alpha})_*$ is linear on $\pi_lS^l$, we need only consider
$$\wt{\theta}_{\alpha}:= (\bar{\theta}_{\alpha})_* (\iota;-): \pi_{m_1}Y \times \cdots \times \pi_{m_k}Y \to \pi_{l+m}Y$$
where $\iota= [\id_{S^l}]\in \pi_lS^l$. Also use the same notation to denote
$$\wt{\theta}_{\alpha}:= (\bar{\theta}_{\alpha})_* (\iota;-): H_{m_1}Y \times \cdots \times H_{m_k}Y \to H_{l+m}Y.$$

\begin{prop}
  Under the condition of Theorem \ref{thm:smash_operations-two}, for $\alpha\in \mc{Z}_2 [S^l, \ms{C}]$, $\wt{\theta}_{\alpha}: \pi_{m_1} Y \times \pi_{m_2} Y \to \pi_{l+m_1+m_2}Y$ is linear on $\pi_{m_i}Y$ if $m_i\geq 1$ and the following diagram is commutative
  \begin{diagram}
    \pi_{m_1}Y \times \pi_{m_2}Y  &\rTo^{\wt{\theta}_{\alpha}} &\pi_{l+m_1+m_2}Y \\
    \dTo<h  &&\dTo>h \\
    H_{m_1}Y \times H_{m_2}Y  &\rTo^{\wt{\theta}_{\alpha}} &H_{l+m_1+m_2}Y
  \end{diagram}
  for $m_i\geq 0$. \qed
\end{prop}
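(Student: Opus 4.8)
The plan is to deduce both assertions directly from the two general facts about the composite $\phi_* \circ \wedge$ established earlier in this section, specialized to the pointed map $\phi = \bar{\theta}_{\alpha}: S^l \wedge Y \wedge Y \to Y$. This $\bar{\theta}_{\alpha}$ is available precisely under the hypotheses of Theorem~\ref{thm:smash_operations-two}, i.e. when there is $e\in\ms{C}(2)$ with $d_1 e\sim d_2 e\sim 1$ and $\alpha\in\mc{Z}_2[S^l,\ms{C}]$ with $\alpha(*)\sim e$; indeed the associated corollary shows that $\mu'_2-\theta_{\alpha}$ then factors through $S^l\wedge Y\wedge Y$ via $\bar{\theta}_{\alpha}$. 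By definition $\wt{\theta}_{\alpha}=(\bar{\theta}_{\alpha})_*(\iota;-)$ is nothing but the restriction of the induced map $(\bar{\theta}_{\alpha})_*: \pi_l S^l\times\pi_{m_1}Y\times\pi_{m_2}Y\to\pi_{l+m_1+m_2}Y$ to the subset where the first coordinate is fixed at $\iota=[\id_{S^l}]$, and likewise on homology with $\iota$ the fundamental class $\iota_l\in H_l S^l$.

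First I would prove the linearity statement. Apply the Proposition that for any pointed map $\phi: X_1\wedge\cdots\wedge X_k\to X$ the composite $\phi_*\circ\wedge$ is linear on the $i$th factor whenever $m_i\ge 1$, taking $k=3$, $X_1=S^l$, $X_2=X_3=Y$ and $\phi=\bar{\theta}_{\alpha}$. This yields coordinatewise linearity of $(\bar{\theta}_{\alpha})_*$ — with the other coordinates held at arbitrary values — under the respective dimension hypotheses; restricting to the slice with first coordinate $\iota$ (legitimate since in any case $m_1=l\ge 1$) shows that $\wt{\theta}_{\alpha}$ is linear on $\pi_{m_i}Y$ whenever $m_i\ge 1$. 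As usual, no claim is made when $m_i=0$, consistently with the failure of the smash product to be linear in degree zero.

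Next I would verify the Hurewicz square. Here I would invoke the Proposition asserting that for any pointed $\phi: X_1\wedge\cdots\wedge X_k\to X$ the composite $\phi_*\circ\wedge$ commutes with the Hurewicz maps $h$ on all factors, valid for all $m_i\ge 0$; again with $k=3$, $X_1=S^l$, $X_2=X_3=Y$, $\phi=\bar{\theta}_{\alpha}$ this gives a commutative square between $\pi_l S^l\times\pi_{m_1}Y\times\pi_{m_2}Y$ and $H_l S^l\times H_{m_1}Y\times H_{m_2}Y$. The one remaining point is that the left-hand Hurewicz map sends the chosen slice to the corresponding homology slice: $h(\iota)=h([\id_{S^l}])=(\id_{S^l})_*(\iota_l)=\iota_l$, which is exactly the class used to define $\wt{\theta}_{\alpha}$ on homology. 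Restricting the commutative square to these slices then yields $h\circ\wt{\theta}_{\alpha}=\wt{\theta}_{\alpha}\circ(h\times h)$ for all $m_1,m_2\ge 0$, including the degenerate cases $m_i=0$ where $h$ is merely a set map into $H_0 Y=\Z[\pi_0Y]$.

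I do not expect any genuine obstacle: the statement is a direct specialization of the two general propositions to the particular smash operation $\bar{\theta}_{\alpha}$, whose existence is furnished by Theorem~\ref{thm:smash_operations-two}. The only item deserving a line of care is the bookkeeping that fixing one coordinate of a coordinatewise-linear (resp. Hurewicz-compatible) map preserves the property in the remaining coordinates, together with the identification of the homotopy fundamental class $[\id_{S^l}]$ with the homology fundamental class $\iota_l$ under $h$.
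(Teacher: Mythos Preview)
Your proposal is correct and matches the paper's approach: the paper marks this proposition with \qed\ and no argument, treating it as an immediate specialization of the two general propositions in \S5.1 (linearity of $\phi_*\circ\wedge$ in factors of positive degree, and its compatibility with the Hurewicz map) to the particular map $\phi=\bar\theta_\alpha$ furnished by Theorem~\ref{thm:smash_operations-two}. Your bookkeeping remarks about restricting to the slice $\iota\in\pi_lS^l$ and the identification $h(\iota)=\iota_l$ are exactly the implicit steps.
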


\begin{prop}
  Under the condition of Theorem \ref{thm:smash_operations-K(pi,1)}, for $\alpha\in \mc{Z}_k [S^1, \ms{C}]$, $\wt{\theta}_{\alpha}: \pi_{m_1}Y_0 \times \cdots \times \pi_{m_k}Y_0\to \pi_{1+m}Y_0$ is linear on $\pi_{m_i}Y$ if $m_i\geq 1$ and the following diagram is commutative
  \begin{diagram}
    \pi_{m_1}Y_0 \times \cdots \times \pi_{m_k}Y_0  &\rTo^{\wt{\theta}_{\alpha}} &\pi_{1+m}Y_0 \\
    \dTo<h  &&\dTo>h \\
    H_{m_1}Y_0 \times \cdots \times H_{m_k}Y_0  &\rTo^{\wt{\theta}_{\alpha}} &H_{1+m}Y_0
  \end{diagram}
  for $m_i\geq 0$. \qed
\end{prop}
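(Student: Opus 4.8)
The plan is to recognize this statement as a straightforward specialization of the general results on smash products on homotopy groups proved above, applied to the smash operation $\bar{\theta}_{\alpha}$ that Theorem~\ref{thm:smash_operations-K(pi,1)} provides. Under the hypotheses in force, that theorem yields, for each $\alpha \in \mc{Z}_k[S^1,\ms{C}]$, a pointed map $\bar{\theta}_{\alpha}\colon S^1 \wedge Y_0^{\wedge k} \to Y_0$; and, as recalled above, $\wt{\theta}_{\alpha}$ is by definition the composite
$$\pi_{m_1}Y_0 \times \cdots \times \pi_{m_k}Y_0 \xra{(\iota;\,-)} \pi_1 S^1 \times \pi_{m_1}Y_0 \times \cdots \times \pi_{m_k}Y_0 \xra{\wedge} \pi_{1+m}(S^1 \wedge Y_0^{\wedge k}) \xra{(\bar{\theta}_{\alpha})_*} \pi_{1+m}Y_0,$$
where $\iota=[\id_{S^1}]$. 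So the first thing I would do is reduce everything to the behaviour of $\phi_* \circ \wedge$ with $\phi = \bar{\theta}_{\alpha}$ and source factors $S^1, Y_0, \ldots, Y_0$, the $S^1$-coordinate being frozen at $\iota$.

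For the linearity claim I would invoke the proposition asserting that, for any pointed map $\phi\colon X_1 \wedge \cdots \wedge X_k \to X$, the composite $\phi_* \circ \wedge$ is linear on the $i$th factor whenever $m_i \geq 1$. Applied with $\phi = \bar{\theta}_{\alpha}$ and the $k+1$ source factors $S^1, Y_0, \ldots, Y_0$, this gives linearity in every coordinate of positive index; since linearity in one coordinate is a statement with the remaining coordinates held fixed, freezing the $S^1$-coordinate at $\iota$ does not disturb it, and $\wt{\theta}_{\alpha}$ comes out linear on $\pi_{m_i}Y$ for each $m_i \geq 1$.

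For the Hurewicz square I would invoke the proposition asserting that, for a pointed $\phi$, the rectangle whose rows are $\wedge$ followed by $\phi_*$ and whose verticals are the Hurewicz maps is commutative. Taking $\phi = \bar{\theta}_{\alpha}$ and restricting the $S^1$-coordinate of the upper row to $\iota$ and of the lower row to $h(\iota) = \iota_1$, the homology fundamental class of $S^1$ (which is precisely the value of $h$ on $[\id_{S^1}]$), the outer rectangle of that diagram becomes exactly the square in the statement; hence that square commutes for all $m_i \geq 0$.

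I do not expect a genuine obstacle here: the real content has already been absorbed into Theorem~\ref{thm:smash_operations-K(pi,1)}, which supplies $\bar{\theta}_{\alpha}$, and into the preceding lemmas on smash products and the Hurewicz homomorphism, so what remains is purely formal. The one point I would be careful about is the index bookkeeping — the $S^1$ factor carries an index of its own that must be frozen at the fundamental class, which is exactly what the passage from $(\bar{\theta}_{\alpha})_*$ to $\wt{\theta}_{\alpha} = (\bar{\theta}_{\alpha})_*(\iota;-)$ accomplishes.
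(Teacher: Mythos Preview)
Your proposal is correct and matches the paper's approach exactly: the statement is marked \qed\ with no proof given, because it is an immediate specialization of the preceding general propositions (linearity of $\phi_*\circ\wedge$ in factors of positive degree, and commutativity of the Hurewicz square for any pointed $\phi$) to $\phi=\bar{\theta}_{\alpha}$ with the $S^1$-coordinate frozen at $\iota$. Your write-up simply spells out what the paper leaves implicit.
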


\begin{example}
  The Samelson product $[-,-]: \om X\wedge \om X\to \om X$ induces a product on homotopy groups
  $$[-,-]: \pi_i \om X \times \pi_j \om X\to \pi_{i+j} \om X.$$
  For $n\geq 2$, $\bar{\theta}_{\iota}= \bar{\theta}_{\iota_{n-1}}: S^{n-1} \wedge \om^n X \wedge \om^n X \to \om^n X$ induces a product
  $$\wt{\theta}_{\iota}: \pi_i \om^n X \times \pi_j \om^n X\to \pi_{i+j} \om^n X.$$
  For $\alpha\in \Brun_k/ \mr{ca}(P_k)$, $\bar{\theta}_{\alpha}: S^1 \wedge (\om^2_0 X)^{\wedge k} \to \om^2_0 X$ induces
  $$\wt{\theta}_{\alpha}: \pi_{m_1} \om^2_0 X \times \cdots \times \pi_{m_k} \om^2_0 X \to \pi_{1+m} \om^2_0 X,$$
  namely
  $$\wt{\theta}_{\alpha}: \pi_{2+m_1} X \times \cdots \times \pi_{2+m_k} X \to \pi_{3+m} X$$
  for $m_i\geq 1$. In particular when $n\geq 3$, $X= S^n$ and $2+m_i= n$,
  $$\wt{\theta}_{\alpha}: \pi_n S^n \times \cdots \times \pi_n S^n \to \pi_{3+k(n-2)} S^n.$$
  Clearly each $\alpha\in \Brun_k/ \mr{ca}(P_k)$ gives an element $\wt{\theta}_{\alpha} (\iota_n^{(k)})\in \pi_{3+k(n-2)} S^n$. Thus the identity map of $S^n$ generates a family of elements in $\pi_* S^n$ under all operations $\wt{\theta}_{\alpha}$ and there will be more if take (mixed) iterations of these operations. If $k\geq 3$, $\wt{\theta}_{\alpha}$ is unfortunately trivial in homology since Brunnian braids are commutators, so that information of $\wt{\theta}_{\alpha}$ on homotopy groups can not be obtained from homology.
\end{example}

Corresponding to the relation between $\bar{\theta}_{\iota}$ and the Samelson product, the induced operation $\wt{\theta}_{\iota}$ on homotopy groups is related to the Whitehead product
$$[-,-]: \pi_i \om^n X \times \pi_j \om^n X= \pi_{n+i} X \times \pi_{n+j} X \to \pi_{2n-1+i+j} X.$$

\begin{prop}
  For $x\in \pi_i \om^n X$ and $y\in \pi_j \om^n X$ ($i,j\geq 1$),
  $$\wt{\theta}_{\iota}(x,y)- [x,y]\in \Ker (h: \pi_{n-1+i+j} \om^n_0 X \to H_{n-1+i+j} \om^n_0 X),$$
  where $h$ is the Hurewicz homomorphism.
\end{prop}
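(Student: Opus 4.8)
The plan is to show that the two sides have the same Hurewicz image in $H_{n-1+i+j}\om^n_0 X$. Since $n-1+i+j\geq 3$, the Hurewicz map $h$ is a homomorphism in this degree, so it suffices to prove $h\bigl(\wt{\theta}_{\iota}(x,y)\bigr)= h\bigl([x,y]\bigr)$, and I would do this by computing both quantities and observing that they are (the same) multiple of the Browder operation $\lambda_{n-1}\bigl(h(x),h(y)\bigr)$.

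For the left-hand term, I would invoke the Hurewicz square for $\wt{\theta}_{\iota}$ established above (under the hypotheses of Theorem~\ref{thm:smash_operations-two}, which hold for $\ms{C}=\ms{C}_n$ and $\iota\in\mc{Z}_2[S^{n-1},\ms{C}_n]$), giving $h\bigl(\wt{\theta}_{\iota}(x,y)\bigr)= \wt{\theta}_{\iota}\bigl(h(x),h(y)\bigr)$ in homology. As $i,j\geq 1$, the classes $h(x),h(y)$ lie in $H_*\om^n_0 X$ and are primitive, $\om^n_0 X$ being a connected $H$-space, so the Proposition on primitive classes applies:
$$\wt{\theta}_{\iota}\bigl(h(x),h(y)\bigr)= (\bar{\theta}_{\iota})_*\bigl(\iota\otimes h(x)\otimes h(y)\bigr)= \theta_*\bigl(\bar{\iota}\otimes h(x)\otimes h(y)\bigr),$$
where $\bar{\iota}\in H_{n-1}\ms{C}_n(2)$ is the image of the fundamental class under a map $S^{n-1}\to\ms{C}_n(2)$ representing $\iota$, i.e. the generator coming from the $S_2$-equivariant equivalence $\ms{C}_n(2)\simeq S^{n-1}$. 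By the defining formula for the Browder operation used in the Proposition expressing $\theta_*(a_{ij}\otimes\cdots)$ in terms of $\lambda_{n-1}$, this equals $(-1)^{(n-1)i+1}\lambda_{n-1}\bigl(h(x),h(y)\bigr)$.

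For the right-hand term, write $\hat x\in\pi_{n+i}X$ and $\hat y\in\pi_{n+j}X$ for the classes corresponding to $x,y$ under $\pi_*\om^n X\cong\pi_{*+n}X$, so that $[x,y]\in\pi_{n-1+i+j}\om^n X$ is by definition the $n$-fold adjoint of the Whitehead product $[\hat x,\hat y]\in\pi_{2n-1+i+j}X$. Here the plan is to use the classical compatibility, via the Hurewicz map, between an $n$-fold looped Whitehead product and the Browder operation $\lambda_{n-1}$: one adjunction identifies $[\hat x,\hat y]$ (up to sign) with a Samelson product, whose Hurewicz image is the graded commutator in the Pontryagin ring, and each further loop-down replaces that commutator by the next lower Browder operation, the composite of $n-1$ such steps producing $\lambda_{n-1}$. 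Concretely this is Theorem~2 of \cite{Browder:1960:HOLS} applied $n-1$ times, exactly as in the proof of the Proposition that $\phi_n$ and $\psi_n$ induce the same homology homomorphism; it yields $h\bigl([x,y]\bigr)= (-1)^{(n-1)i+1}\lambda_{n-1}\bigl(h(x),h(y)\bigr)$, with the same sign as before, and comparison of the two computations gives $h\bigl(\wt{\theta}_{\iota}(x,y)\bigr)= h\bigl([x,y]\bigr)$.

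I expect the main obstacle to be this last step: one must state precisely, and verify, the Hurewicz-level identity that the $n$-fold adjoint of $[\hat x,\hat y]$ maps to $\lambda_{n-1}\bigl(h(x),h(y)\bigr)$, and—more delicately—track the signs through the $n-1$ applications of Browder's theorem and through the adjunction and suspension isomorphisms so that they match the sign obtained for $\wt{\theta}_{\iota}$; this is the bookkeeping already carried out in the $\phi_n\simeq\psi_n$ argument. Everything else—commutativity of the Hurewicz squares, primitivity of $h(x)$ and $h(y)$, and the homological identification of $\bar{\theta}_{\iota}$ with $\theta_*(\bar{\iota}\otimes-\otimes-)$—is supplied by the results already established above.
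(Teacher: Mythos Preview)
Your proposal is correct and matches the paper's approach: the paper states that the proposition is ``an immediate consequence of the above discussion and the following lemma'', where ``the above discussion'' is exactly your first computation (the Hurewicz square for $\wt{\theta}_{\iota}$ together with the primitive-class identity giving $\theta_*(\bar{\iota}\otimes h(x)\otimes h(y))$), and ``the following lemma'' is Cohen's result that $h([x,y])=\lambda_{n-1}(h(x),h(y))$. The only difference is that the paper simply cites Cohen's lemma from \cite{Coh-Lad-May:1976:HILS} for the $[x,y]$ side, whereas you propose to rederive it via $n-1$ applications of Browder's Theorem~2; this is precisely the content of Cohen's lemma, so your sign-tracking worry is already absorbed into that citation.
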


This proposition is an immediate consequence of the above discussion and the following lemma.

\begin{lem}[Cohen (1976) \cite{Coh-Lad-May:1976:HILS}]
  The following diagram is commutative for $i,j\geq 1$
  \begin{diagram}
    \pi_i \om^n X \times \pi_j \om^n X  &\rTo^{[-,-]}  &\pi_{n-1+i+j} \om^n X \\
    \dTo  &&\dTo\\
    H_i \om^n X \times H_j \om^n X  &\rTo^{\lambda_{n-1}}  &H_{n-1+i+j} \om^n X
  \end{diagram}
  where the Browder operation $\lambda_{n-1}= (-1)^{(n-1)i+1} \theta_* (\iota_{n-1} \otimes a \otimes b)$. \qed
\end{lem}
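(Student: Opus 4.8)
The plan is to prove the statement by naturality reduction to a universal example and then to locate both sides in a single homology group, following \cite{Coh-Lad-May:1976:HILS}. Under the canonical isomorphism $\pi_k \om^n X \cong \pi_{k+n} X$ the operation $[-,-]$ of the statement is the ordinary Whitehead product on $\pi_* X$, and as such is natural for arbitrary maps $X \to X'$; the Browder operation $\lambda_{n-1}$ is natural for maps of $\ms{C}_n$-spaces, in particular for $\om^n F$; and the Hurewicz homomorphism $h$ is natural. So it suffices to verify the square on one universal pair. I would take $X_0 = S^{n+i} \vee S^{n+j} = \si^n(S^i \vee S^j)$ (keeping the two summands distinct even when $i = j$), with inclusions $a \colon S^{n+i} \into X_0$ and $b \colon S^{n+j} \into X_0$, and let $\alpha \in \pi_i \om^n X_0$ and $\beta \in \pi_j \om^n X_0$ be the classes adjoint to $a$ and $b$; every pair $(x, y) \in \pi_i \om^n X \times \pi_j \om^n X$ is $((\om^n F)_* \alpha, (\om^n F)_* \beta)$ for a map $F \colon X_0 \to X$ whose restrictions to the two wedge summands are adjoints of representatives of $x$ and $y$. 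Thus the lemma reduces to the single identity
$$h([\alpha, \beta]) = \lambda_{n-1}(h\alpha, h\beta) \in H_{n-1+i+j} \om^n X_0 .$$

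Here $[\alpha, \beta]$ is by construction the $n$-fold adjoint of the Whitehead product $[a, b] \colon S^{2n+i+j-1} \to S^{n+i} \vee S^{n+j}$, i.e. of the attaching map of the top cell of $S^{n+i} \times S^{n+j}$. I would then identify $H_{n-1+i+j} \om^n X_0$ using Cohen's computation of $H_* \om^n \si^n Y$ (equivalently the Hilton--Milnor type splitting of $\om^n \si^n(S^i \vee S^j)$): writing $\iota_i = h\alpha$ and $\iota_j = h\beta$, the inclusion $\ell \colon X_0 \into S^{n+i} \times S^{n+j}$ induces $(\om^n \ell)_* \colon H_* \om^n X_0 \to H_* \om^n S^{n+i} \otimes H_* \om^n S^{n+j}$, and in degree $n-1+i+j$ its kernel is generated by $\lambda_{n-1}(\iota_i, \iota_j) = (-1)^{(n-1)i+1} \theta_*(\iota_{n-1} \otimes h\alpha \otimes h\beta)$: every Dyer--Lashof type class and every bracket built from a single generator maps to a nonzero element, and, since $n \geq 2$, no product of the $\iota$'s and no higher operation lands in this degree. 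Because $[a, b]$ bounds in $S^{n+i} \times S^{n+j}$, the class $h([\alpha, \beta])$ lies in that kernel, hence equals $c \cdot \lambda_{n-1}(\iota_i, \iota_j)$ for some integer $c$.

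It remains to show $c = 1$, and for this I would appeal to the little-cubes/configuration-space model: in the degree-two stratum $F(\R^n, 2) \times_{\Sigma_2} (S^i \vee S^j)^{\wedge 2} \to \om^n \si^n(S^i \vee S^j)$, the class $\lambda_{n-1}(\iota_i, \iota_j)$ is represented by the map on $S^{n-1} \wedge S^i \wedge S^j$ built from the standard $\Sigma_2$-equivariant equivalence $S^{n-1} \simeq F(\R^n, 2)$ (two points orbiting the origin) with the two wedge coordinates as labels, while the adjoint $S^{n-1+i+j} \to \om^n \si^n(S^i \vee S^j)$ of $[a, b]$ — the Whitehead product seen as the linking of the two cells — is represented by the same configuration; a homotopy between these two representatives yields $c = \pm 1$, with the sign the orientation convention already absorbed into the definition of $\lambda_{n-1}$, and applying $h$ finishes the proof. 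I expect this last step — making the identification of the Whitehead product with the ``orbiting points'' class precise and keeping track of the sign — to be the main obstacle; a fallback is to pin down $c$ by a direct computation in one further small case and then invoke the bound $c = \pm 1$ obtained above.
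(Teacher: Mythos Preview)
The paper does not supply a proof of this lemma: it is stated with attribution to Cohen \cite{Coh-Lad-May:1976:HILS} and closed with \qed, so there is no in-paper argument to compare against. Your outline is therefore being measured against Cohen's original, and it does converge on his approach in the final paragraph --- but the route you take to get there has an unnecessary and under-justified detour.

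The naturality reduction to $X_0=S^{n+i}\vee S^{n+j}$ is correct and standard. The weak point is the claim that the kernel of $(\om^n\ell)_*$ in degree $n-1+i+j$ is integrally generated by $\lambda_{n-1}(\iota_i,\iota_j)$. Cohen's structure theorems for $H_*\om^n\si^n Y$ are stated with field coefficients, and integrally the homology of iterated loop spaces of spheres carries torsion you have not accounted for; moreover your parenthetical ``no product of the $\iota$'s and no higher operation lands in this degree'' is not true in general (for instance when $i=j=1$, $n=3$ one has $2i+2j=n-1+i+j$), so even the free part needs a sharper argument than the one sketched. Thus the sentence ``hence equals $c\cdot\lambda_{n-1}(\iota_i,\iota_j)$ for some integer $c$'' is a genuine gap as written.

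The remedy is that you do not need that step at all: your last paragraph already is the proof. The $n$-fold adjoint of the Whitehead product $[a,b]\colon S^{2n+i+j-1}\to S^{n+i}\vee S^{n+j}$, read through the approximation $\ms{C}_n(S^i\vee S^j)\to\om^n\si^n(S^i\vee S^j)$, is precisely the map $S^{n-1}\wedge S^i\wedge S^j\to\om^n\si^n(S^i\vee S^j)$ given by two labelled little cubes orbiting along the standard equivalence $S^{n-1}\simeq\ms{C}_n(2)$; that map represents $\theta_*(\iota_{n-1}\otimes\iota_i\otimes\iota_j)$ on the nose, which up to the sign already absorbed into the definition is $\lambda_{n-1}(\iota_i,\iota_j)$. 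Making this homotopy explicit via the join description of the Whitehead product is exactly Cohen's argument, and once carried out it yields the equality directly, with no auxiliary integer $c$ to pin down. Drop the kernel computation and write up the geometric identification in full.
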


It should be noted that this lemma does not hold generally if $i=0$ or $j=0$. Noting $\theta: \ms{C}_2(2) \times \om^n_iS^n \times \om^n_jS^n \to \om^n_{i+j}S^n$, $\lambda_{n-1} (\iota_n, \iota_n)\in H_{n-1} \om^n_2S^n$, while $h[\iota_n, \iota_n]\in h\pi_{n-1} \om^nS^n= h\pi_{n-1} \om^n_0S^n \subseteq H_{n-1} \om^n_0S^n$. It seems they would become the same after translating $\lambda_{n-1} (\iota_n, \iota_n)$ into $H_{n-1} \om^n_0S^n$.

It is also natural to make the following conjecture.

\begin{conj}
  $\wt{\theta}_{\iota}$ coincides with the Whitehead product $[-,-]$ on homotopy groups, namely the following diagram is commutative
  $$\xymatrix{
    \pi_{n+i} X \times \pi_{n+j} X \ar@{=}[d] \ar[r]^-{[-,-]} & \pi_{2n+i+j-1} X \ar@{=}[d] \\
    \pi_i \om^n X \times \pi_j \om^n X \ar[r]^-{\wt{\theta}_{\iota}} & \pi_{n-1+i+j} \om^n X   }$$
\end{conj}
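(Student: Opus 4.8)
The plan is to reduce the statement, by naturality, to a single universal example, and then pin the answer down with the Hurewicz homomorphism using the homological identities established above.

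\emph{Reduction.} Both $\wt\theta_\iota$ and the Whitehead product are natural in the target space: for a map $f\colon X\to X'$ the map $\om^nf$ is a $\ms{C}_n$-map, and the induced operations $\wt\theta_\alpha$ are natural with respect to $\ms{C}_n$-maps. So, given $x\in\pi_{n+i}X=\pi_i\om^nX$ and $y\in\pi_{n+j}X=\pi_j\om^nX$ with $i,j\geq1$, represent them by $\hat x\colon S^{n+i}\to X$, $\hat y\colon S^{n+j}\to X$, put $p=n+i$, $q=n+j$, $f=\hat x\vee\hat y\colon S^p\vee S^q\to X$, and let $\bar u\in\pi_i\om^n(S^p\vee S^q)$, $\bar v\in\pi_j\om^n(S^p\vee S^q)$ be the adjoints of the two wedge inclusions. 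Then $(\om^nf)_*\bar u=x$ and $(\om^nf)_*\bar v=y$, so by naturality it suffices to prove $\wt\theta_\iota(\bar u,\bar v)=[\bar u,\bar v]$ in
$$\pi_{2n+i+j-1}(S^p\vee S^q)=\pi_{n-1+i+j}\om^n(S^p\vee S^q).$$
Since $i,j\geq1$, the wedge $S^p\vee S^q$ is $n$-connected, so $\om^n(S^p\vee S^q)$ is path-connected and $\bar\theta_\iota$ from Theorem \ref{thm:smash_operations-two} genuinely takes values there.

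\emph{Detection.} Let $c_1,c_2\colon S^p\vee S^q\to S^p,S^q$ be the collapse maps. Because $\bar\theta_\iota$ is defined on the smash product $S^{n-1}\wedge\om^n(S^p\vee S^q)^{\wedge 2}$ one has $\wt\theta_\iota(z,0)=\wt\theta_\iota(0,z)=0$; and $(c_1)_*\bar v=0=(c_2)_*\bar u$, so naturality forces $(c_1)_*\wt\theta_\iota(\bar u,\bar v)=(c_2)_*\wt\theta_\iota(\bar u,\bar v)=0$. By the Hilton--Milnor theorem the only basic product of weight $\geq 2$ with dimension at most $2n+i+j-1$ is the bracket of the two generators, so $\Ker\bigl((c_1)_*,(c_2)_*\bigr)$ equals the rank-one group $\Z\cdot[\bar u,\bar v]$ in this degree; hence $\wt\theta_\iota(\bar u,\bar v)=k\,[\bar u,\bar v]$ for some $k\in\Z$. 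To evaluate $k$, apply the Hurewicz map $h$ of $\om^n(S^p\vee S^q)=\om^n\si^n(S^i\vee S^j)$: combining the compatibility of $\wt\theta_\iota$ with $h$ and the identity $(\bar\theta_\iota)_*(\iota\otimes a\otimes b)=\theta_*(\iota_{n-1}\otimes a\otimes b)$ for primitive $a,b$ gives $h(\wt\theta_\iota(\bar u,\bar v))=\pm\lambda_{n-1}(h\bar u,h\bar v)$, while Cohen's Lemma gives $h([\bar u,\bar v])=\lambda_{n-1}(h\bar u,h\bar v)$, where $h\bar u,h\bar v$ are the two canonical (primitive) generators of $H_*\om^n\si^n(S^i\vee S^j)$. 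By the Cohen--Lada--May description of this homology --- or, over $\Q$, by the Milnor--Moore theorem, which makes $h$ injective on the rational homotopy of the connected $H$-space $\om^n\si^n(S^i\vee S^j)$ --- the Browder bracket $\lambda_{n-1}(h\bar u,h\bar v)$ of the two distinct generators is nonzero of infinite order. Comparing with $h(\wt\theta_\iota(\bar u,\bar v))=k\,h([\bar u,\bar v])$ forces $k=\pm1$, i.e. $\wt\theta_\iota$ agrees with the Whitehead product up to sign.

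\emph{Main obstacle.} The structural steps above I expect to be routine; the real difficulty is the sign. To upgrade ``up to sign'' to an equality one must reconcile the orientation of $\iota\in\pi_{n-1}\ms{C}_n(2)\cong\pi_{n-1}S^{n-1}$ with the Koszul-sign conventions built into the Browder operation $\lambda_{n-1}$, the Samelson product, and the Whitehead product, which forces one to work with explicit signs rather than ``up to sign''; it is conceivable the correct statement carries a degree-dependent sign such as $(-1)^{(n-1)i+1}$. A secondary point is the range of validity: the argument uses $i,j\geq1$ throughout (for $i=0$ or $j=0$ both the additivity of $\wt\theta_\iota$ in that slot and the path-connectedness used above break down, as already noted after Cohen's Lemma), so those cases would need a separate treatment.
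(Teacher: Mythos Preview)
The statement you are asked to prove is labelled a \emph{Conjecture} in the paper; there is no proof to compare against. The author only establishes the weaker Proposition that $\wt\theta_\iota(x,y)-[x,y]$ lies in the kernel of the Hurewicz map, and then records the full equality as an open conjecture. So your proposal is not a reproduction of the paper's argument but an attempt to go beyond it.

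Your reduction is sound and is a genuine advance over what the paper proves. Naturality brings everything down to $X=S^p\vee S^q$ with $p=n+i$, $q=n+j$; your use of Hilton--Milnor is correct: in degree $p+q-1$ the only weight~$\geq 2$ summand is $\pi_{p+q-1}(S^{p+q-1})=\Z\cdot[\bar u,\bar v]$, because the next basic products sit on spheres of dimension $\geq 2p+q-2>p+q-1$ (here $n\geq 2$, $i,j\geq 1$), and the two collapses identify the weight-one part with $\pi_{p+q-1}(S^p)\oplus\pi_{p+q-1}(S^q)$. Hence $\wt\theta_\iota(\bar u,\bar v)=k\,[\bar u,\bar v]$ for some integer $k=k(n,i,j)$. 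Now simply feed this into the paper's own Proposition: it says $h\bigl(\wt\theta_\iota(\bar u,\bar v)-[\bar u,\bar v]\bigr)=0$, i.e.\ $(k-1)\,h[\bar u,\bar v]=0$, and since $h[\bar u,\bar v]=\lambda_{n-1}(h\bar u,h\bar v)$ has infinite order in $H_*\om^n\si^n(S^i\vee S^j)$ you get $k=1$. In other words, \emph{your Hilton--Milnor step together with the Proposition the paper already proves would settle the conjecture for $i,j\geq 1$} --- provided that Proposition really holds on the nose.

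That proviso is exactly your ``main obstacle'', and your instinct is right to flag it. The paper is systematically loose with signs (Proposition~3.1 is stated ``up to sign''; the identity $(\bar\theta_\alpha)_*(\iota\otimes y_1\otimes y_2)=\theta_*(\bar\alpha\otimes y_1\otimes y_2)$ for primitives ignores that $\bar\theta_\alpha$ is induced by $\mu'_2-\theta_\alpha$, which introduces a minus sign; and Cohen's lemma carries its own sign $(-1)^{(n-1)i+1}$). So the honest conclusion from the paper's stated results is $k=\pm1$, as you say, and upgrading this to $k=1$ requires tracking the conventions for $\iota\in\pi_{n-1}\ms C_n(2)$, for $\lambda_{n-1}$, and for the Whitehead/Samelson product simultaneously. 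Your remark that $i,j\geq 1$ is essential is also correct and matches the paper's own caveat after Cohen's lemma.
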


Composed with the canonical inclusion $X\into \lsn X$, smash operations also provide operations connecting the homotopy groups of various suspensions of a space. Namely, for a path-connected pointed space $X$, if there is a map $S^l \wedge (\om^n X)^{\wedge k} \to \om^n X$, then
$$S^l \wedge X^{\wedge k} \into S^l \wedge (\lsn X)^{\wedge k} \to \lsn X$$
induces a multilinear homomorphism
$$\pi_{m_1} X \times \cdots \times \pi_{m_k} X \to \pi_{m_1} \lsn X \times \cdots \times \pi_{m_k} \lsn X \to \pi_{l+m} \lsn X= \pi_{l+n+m} \si^n X$$
for $m_i\geq 0$. Thus the Samelson product
$$X\wedge X \into \lsi X\wedge \lsi X \xra{[-,-]} \lsi X$$
induces a bilinear homomorphism
$$\pi_i X \times \pi_j X \to \pi_i \lsi X \times \pi_j \lsi X \to \pi_{i+j} \lsi X = \pi_{1+i+j} \si X,$$
and
$$S^{n-1} \wedge X \wedge X \into S^{n-1} \wedge \lsn X \wedge \lsn X \xra{\bar{\theta}_{\iota}} \lsn X$$
also induces a bilinear homomorphism
$$\pi_i X \times \pi_j X \into \pi_i \lsn X \times \pi_j \lsn X \to \pi_{n-1+i+j} \lsn X= \pi_{2n-1+i+j} \si^n X.$$
Similarly, for $\alpha\in \Brun_k/ \mr{ca}(P_k)$, $\bar{\theta}_{\alpha}: S^1 \wedge (\om^2_0 X)^{\wedge k} \to \om^2_0 X$ gives
$$S^1 \wedge X^{\wedge k} \into S^1 \wedge (\lsii X)^{\wedge k} \xra{\bar{\theta}_{\alpha}} \lsii X$$
inducing a multilinear homomorphism
$$\pi_{m_1} X \times \cdots \times \pi_{m_k} X \to \pi_{m_1} \lsii X \times \cdots \times \pi_{m_k} \lsii X \to \pi_{1+m} \lsii X= \pi_{3+m} \si^2 X.$$
If smash operations $S^l \wedge (\om^n X)^{\wedge k} \to \om^n X$ could also be established for $n,k\geq 3$, then they would induce multilinear homomorphisms
$$\pi_{m_1} X \times \cdots \times \pi_{m_k} X \to \pi_{l+n+m} \si^n X.$$

It is of particular interest to consider this type of operations on the homotopy groups for various spheres $S^n$, $n\geq 1$. The Samelson product on $\om X$ induces a binary operation from $\pi_* S^n$ to $\pi_* S^{n+1}$, smash operations on $\om^2 X$ induce operations from $\pi_* S^n$ to $\pi_* S^{n+2}$, and smash operations on $\om^m X$ induce operations from $\pi_* S^n$ to $\pi_* S^{n+m}$. Hence all the homotopy groups of all spheres $\{\pi_m S^n\}_{m,n\geq 1}$ can be connected by these operations.

Smash operations are also connected to the homotopy groups of wedges of spheres. Recall \cite{Whitehead:1978:EHT} that the set of all operations on homotopy groups is in one-to-one correspondence with homotopy groups of wedges of spheres. Let $X= S^{2+m_1} \vee \cdots \vee S^{2+m_k}$, and $\iota_{2+m_i}$ the homotopy class of the inclusion $S^{2+m_i} \into X$. Hence we have the following functions from the conjugacy classes of Brunnian braids to the homotopy groups of certain wedges of spheres
\begin{align*}
  \wt{\Theta}_2^2: P_2 & \to \pi_{3+i+j} (S^{2+i} \vee S^{2+j}) \\
  \alpha & \mapsto \wt{\theta}_{\alpha} (\iota_{2+i}, \iota_{2+j})
\end{align*}
where $i,j\geq 0$, and
\begin{align*}
  \wt{\Theta}_2^k: \mr{Brun}_k/ P_k & \to \pi_{3+m} (S^{2+m_1} \vee \cdots \vee S^{2+m_k}) \\
  \alpha & \mapsto \wt{\theta}_{\alpha} (\iota_{2+m_1}, \ldots, \iota_{2+m_k})
\end{align*}
where $m_1,\ldots,m_k\geq 1$.

\subsection{Structures of Homotopy Groups}
All these induced smash operations on homotopy groups can be assembled together to give a conceptual description of the structure of homotopy groups. We shall deal with algebraic operads in the following. One may refer to \cite{LodVal:AO} for algebraic operads.

First let us look $\pi_* (\om_0 X)$ from the point of view of algebraic operads. The Samelson product on homotopy groups
$$\theta_{\tau}= [-,-]: \pi_i \om_0 X \times \pi_j \om_0 X \to \pi_{i+j} \om_0 X$$
generates a free algebraic operad such that $\pi_* \om_0 X$ is a module over this algebraic operad with kernel essentially captured by the following two relations \cite{Cohen:1987:CHT}
$$[a,b]= (-1)^{ij+1} [b,a], \quad [[a,b],c]- [a,[b,c]]+ (-1)^{ij} [b,[a,c]]=0.$$

Under the conditions of Theorem \ref{thm:smash_operations-K(pi,1)} and if $\ms{C}$ is symmetric, the action of $S_k$ on $[S^1, \ms{C}(k)]$ restricts to an action of $S_k$ on $\mc{Z}_k [S^1, \ms{C}]$. Let $\ms{F} (\mc{Z} [S^1,\ms{C}]; \Z)$ be the free symmetric algebraic operad over $\Z[\ms{S}]= \{\Z[S_k]\}_{k\geq 0}$ generated by $\mc{Z} [S^1, \ms{C}]= \{\mc{Z}_k [S^1, \ms{C}]\}_{k\geq 1}$. Note that $\mc{Z}_k [S^1, \ms{C}_2]= \Brun_k/ \mr{ca}(P_k)$. (One may also consider nonsymmetric $\ms{C}$.)

\begin{thm}\label{thm:structure_homotopy_groups-K(pi,1)}
  Under the conditions of Theorem \ref{thm:smash_operations-K(pi,1)} and if $\ms{C}$ is symmetric, $\pi_* Y_0$ is a module over $\ms{F} (\mc{Z} [S^1, \ms{C}]; \Z)$, that is $\pi_* Y_0$ admits a natural action of the operad $\ms{F} (\mc{Z} [S^1, \ms{C}]; \Z)$. In particular, $\pi_* \om^2_0 X$ is a module over $\ms{F} (\mc{Z} [S^1, \ms{C}_2]; \Z)$. \qed
\end{thm}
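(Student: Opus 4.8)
The plan is to read this off from the universal property of the free operad, using Theorem~\ref{thm:smash_operations-K(pi,1)} and the linearity statements of Section~5 as black boxes. Recall that to make $\pi_* Y_0$ a module over $\ms{F}(\mc{Z}[S^1,\ms{C}];\Z)$ means to give a morphism of $\Z$-linear symmetric operads $\ms{F}(\mc{Z}[S^1,\ms{C}];\Z)\to\End(\pi_*Y_0)$, where $\End(\pi_*Y_0)(k)$ is the abelian group of multilinear maps $(\pi_*Y_0)^{\otimes k}\to\pi_*Y_0$; this is an abelian group because $Y_0$, being an $H$-space, has abelian $\pi_1$, and the internal degree is tracked only as bookkeeping, since every operation below raises degree by $1$ and these shifts simply add under operadic composition. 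As $\ms{F}(\mc{Z}[S^1,\ms{C}];\Z)$ is free on the symmetric sequence of $S_k$-sets $\mc{Z}[S^1,\ms{C}]=\{\mc{Z}_k[S^1,\ms{C}]\}_{k\geq1}$, such a morphism is exactly a family of $S_k$-equivariant maps $\mc{Z}_k[S^1,\ms{C}]\to\End(\pi_*Y_0)(k)$; no compatibility beyond equivariance need be verified, because the free operad imposes no relations among its generators.

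For this family I would take $\alpha\mapsto\wt\theta_\alpha$. By Theorem~\ref{thm:smash_operations-K(pi,1)}, every $\alpha\in\mc{Z}_k[S^1,\ms{C}]$ yields a smash operation $\bar\theta_\alpha:S^1\wedge Y_0^{\wedge k}\to Y_0$ and hence the induced operation $\wt\theta_\alpha=(\bar\theta_\alpha)_*(\iota;-):\pi_{m_1}Y_0\times\cdots\times\pi_{m_k}Y_0\to\pi_{1+m}Y_0$. To see that $\wt\theta_\alpha$ is a legitimate element of $\End(\pi_*Y_0)(k)$ one invokes the proposition of Section~5 asserting that $\wt\theta_\alpha$ is linear in $\pi_{m_i}Y_0$ whenever $m_i\geq1$; since $Y_0$ is path-connected only the cases $m_i\geq1$ occur, so $\wt\theta_\alpha$ is genuinely multilinear and factors through $(\pi_*Y_0)^{\otimes k}$.

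The only point that needs a short argument is $S_k$-equivariance: $\wt\theta_{\sigma\cdot\alpha}=\sigma\cdot\wt\theta_\alpha$ for $\sigma\in S_k$, the right side permuting the $k$ arguments. This I would deduce from two facts already at hand. First, $\Theta_{\ms{C}}^{1,*}$ is a morphism of symmetric operads, so $\theta_{\sigma\cdot\alpha}=\sigma\cdot\theta_\alpha$ in $[S^1\times Y_0^k,Y_0]$. Second, since each $\ms{C}(2)$ is path-connected we have $e_2\sim\tau e_2$, so by the corollary of Section~2 the space $Y_0$ is a homotopy associative and homotopy commutative $H$-space; consequently $\mu_k\circ\sigma\simeq\mu_k$ for every $\sigma$, i.e. $\mu'_k$ is $S_k$-invariant up to homotopy. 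As the $S_k$-action on $[S^1\times Y_0^k,Y_0]$ is additive, these two facts give $\mu'_k-\theta_{\sigma\cdot\alpha}\simeq\sigma\cdot(\mu'_k-\theta_\alpha)$, and since the $S_k$-action is compatible with the collapse $S^1\times Y_0^k\to S^1\wedge Y_0^{\wedge k}$ this descends to $\bar\theta_{\sigma\cdot\alpha}\simeq\sigma\cdot\bar\theta_\alpha$, whence $\wt\theta_{\sigma\cdot\alpha}=\sigma\cdot\wt\theta_\alpha$ on homotopy groups. The universal property now delivers the operad morphism $\ms{F}(\mc{Z}[S^1,\ms{C}];\Z)\to\End(\pi_*Y_0)$, i.e. the module structure; its naturality with respect to $\ms{C}$-maps is inherited from that of each $\wt\theta_\alpha$ noted when these operations were introduced. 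The main obstacle here is not conceptual but bookkeeping: fixing the (trivial) grading convention on $\End(\pi_*Y_0)$ and tracking the directions of the $S_k$-actions carefully through the smash quotient and the passage to homotopy groups.

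For the special case one only checks the hypotheses of Theorem~\ref{thm:smash_operations-K(pi,1)} for $\ms{C}=\ms{C}_2$: each $\ms{C}_2(k)=F(\R^2,k)$ is a path-connected manifold, hence locally path-connected and semilocally simply-connected, the $S_k$-action on it is free, it is a $K(P_k,1)$, and the canonical inclusion $(\ms{C}_1)_0\into\ms{C}_2$ supplies a good basepoint; moreover $\om^2 X$ is a $\ms{C}_2$-space and $\mc{Z}_k[S^1,\ms{C}_2]=\Brun_k/\mr{ca}(P_k)$ as recorded earlier. Applying the general statement to $Y=\om^2 X$ then gives that $\pi_*\om^2_0 X$ is a module over $\ms{F}(\mc{Z}[S^1,\ms{C}_2];\Z)=\ms{F}(\{\Brun_k/\mr{ca}(P_k)\}_{k\geq1};\Z)$, with the understanding that the kernel of this action (encoding the actual relations, as in the Samelson case) is left undetermined.
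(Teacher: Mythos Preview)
The paper gives no proof of this theorem at all --- the statement ends with \qed\ and is treated as an immediate consequence of the constructions in Sections~4 and~5. Your proposal is a correct and careful elaboration of exactly the argument the paper leaves implicit: existence of $\bar\theta_\alpha$ from Theorem~\ref{thm:smash_operations-K(pi,1)}, multilinearity of $\wt\theta_\alpha$ from the propositions on the smash product, and the universal property of the free operad; your explicit check of $S_k$-equivariance (via $\Theta_{\ms{C}}^{1,*}$ being a morphism of symmetric operads and $\mu'_k$ being $S_k$-invariant by homotopy commutativity) goes beyond what the paper bothers to write down.
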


If $\Z$ is replaced by $\F= \Z/p$ or $\Z_{(p)}$, then $\pi_* (Y_0, \F)$ is an algebra over $\ms{F} (\mc{Z} [S^1, \ms{C}]; \F)$ which is an algebraic operad over $\F[\ms{S}]= \{\F[S_k]\}_{k\geq 0}$.

The identity map of $S^n$ ($n\geq 3$) particularly generates a family of elements in $\pi_*S^n$ under the action of $\ms{F} (\mc{Z} [S^1, \ms{C}_2];\Z)$. If the smash operations on $\om^2_0 X$ could be extended to $\om^2 X$ so that $\pi_0 \om^2 X= \pi_2X$ could be included, then the identity map of $S^2$ would generate a family of elements in $\pi_* S^2$ as well. If Conjecture \ref{conj:smash_operation} could be proved, then the results on $\pi_* \om^2_0 X$ could be generalized to $\pi_* \om^n X$ for $n\geq 3$.

Compare with $\pi_* \om_0X$, the kernel of the action of $\ms{F} (\mc{Z} [S^1, \ms{C}_2]; \Z)$ on $\pi_* \om^2_0 X$ would be generated by certain relations analogous to the relations of the Samelson product $[-,-]: \pi_i \om X \times \pi_j \om X \to \pi_{i+j} \om X$ \cite{Cohen:1987:CHT},
  $$[a,b]= (-1)^{ij+1} [b,a], \quad [[a,b],c]- [a,[b,c]]+ (-1)^{ij} [b,[a,c]]=0.$$

\begin{rem}\label{rem:Brunnian-Lie}
  Note that the conjugacy classes of Brunnian braids play an essential role in the structure of the homotopy groups of double loop spaces by Theorem \ref{thm:structure_homotopy_groups-K(pi,1)}. It is then interesting to see that they are related to $\mr{Lie}(n)$ due to Li and Wu \cite{LiWu:preprint:BGBBBHG}. The conjugation action of $P_n$ on $\Brun_n$ factors through the abelianization $\Brun_n^{\mr{ab}}$. Namely the projection
  $$\Brun_n \onto \Brun_n^{\mr{ab}}$$
  induces a function
  $$\Brun_n/ \mr{ca}(P_n) \onto \Brun_n^{\mr{ab}}/ \mr{ca}(P_n)= \Brun_n^{\mr{ab}} \otimes_{\Z[P_n]} \Z.$$
  Li and Wu in \cite{LiWu:preprint:BGBBBHG} show that there is an epimorphism of abelian groups
  $$\Brun_n^{\mr{ab}} \otimes_{\Z[P_n]} \Z \onto \mr{Lie}(n-1)$$
  and $\mr{Lie}(n-1)$ admits an $S_n$-action such that this is a homomorphism of $\Z[S_n]$-modules. Moreover, Wu conjectures that this epimorphism is an isomorphism of $\Z[S_n]$-modules.
\end{rem}

\bibliography{MyReference}
\bibliographystyle{amsplain}

\end{document}